\theoremstyle{plain}
\newtheorem{prop}{Proposition}[section]
\newtheorem{lem}{Lemma}[section]
\newtheorem{thm}{Theorem}
\theoremstyle{remark}
\newtheorem{rem}{Remark}
\begin{document}

\begin{frontmatter}

\title{Fractal properties of Aldous--Kendall random metric}
\runtitle{Fractal properties of Aldous--Kendall random metric}

\begin{aug}
\author[A]{\inits{G.}\fnms{Guillaume}~\snm{Blanc}\ead[label=e1]{guillaume.blanc1@universite-paris-saclay.fr}}
\address[A]{Université Paris-Saclay, Orsay, France\printead[presep={,\ }]{e1}}
\end{aug}

\begin{abstract}
Investigating a model of scale-invariant random spatial network suggested by Aldous, Kendall constructed a random metric $T$ on $\mathbb{R}^d$, for which the distance between points is given by the optimal connection time, when travelling on the road network generated by a Poisson process of lines with a speed limit.
In this paper, we look into some fractal properties of that random metric.
In particular, although almost surely the metric space $\left(\mathbb{R}^d,T\right)$ is homeomorphic to the usual Euclidean $\mathbb{R}^d$, we prove that its Hausdorff dimension is given by $(\gamma-1)d/(\gamma-d)>d$, where $\gamma>d$ is a parameter of the model; which confirms a conjecture of Kahn.
We also find that the metric space $\left(\mathbb{R}^d,T\right)$ equipped with the Lebesgue measure exhibits a multifractal property, as some points have untypically big balls around them.
\end{abstract}

\begin{abstract}[language=french]
En étudiant un modèle de ``scale-invariant random spatial network'' suggéré par Aldous, Kendall a construit une métrique aléatoire $T$ sur $\mathbb{R}^d$, pour laquelle la distance entre les points est donnée par le temps de trajet optimal, lorsqu'on se déplace sur le réseau de routes engendré par un processus de Poisson de droites avec une limitation de vitesse.
Dans cet article, nous nous intéressons aux propriétés fractales de cette métrique aléatoire.
En particulier, bien que presque sûrement l'espace métrique $\left(\mathbb{R}^d,T\right)$ soit homéomorphe à l'espace euclidien $\mathbb{R}^d$, nous montrons que sa dimension de Hausdorff est donnée par $(\gamma-1)d/(\gamma-d)>d$, où $\gamma>d$ est un paramètre du modèle ; cela confirme une conjecture de Kahn.
Nous montrons par ailleurs que l'espace métrique $\left(\mathbb{R}^d,T\right)$ muni de la mesure de Lebesgue est multifractal, puisque certains points se trouvent être au centre de boules atypiquement grosses.
\end{abstract}

\begin{keyword}[class=MSC]
\kwd{60D05}
\end{keyword}

\begin{keyword}
\kwd{Random geometry}
\kwd{Poisson process}
\kwd{Hausdorff dimension}
\end{keyword}

\end{frontmatter}


\section*{Introduction and main results}

In this paper, we are interested in fractal properties of a self-similar random metric on $\mathbb{R}^d$, which was constructed by Kendall in the pioneer paper \cite{kendall}.
Investigating a model suggested by Aldous in \cite[Subsection 4.1]{aldous}, the author of \cite{kendall} considers a Poisson random measure $\Pi$ with intensity proportional to $\mu_d\otimes v^{-\gamma}\mathrm{d}v$ on $\mathbb{L}_d\times\mathbb{R}_+^*$, where $\mathbb{L}_d$ is the space of affine lines in $\mathbb{R}^d$ and $\mu_d$ is its unique (up to multiplicative constant) invariant measure, and $\gamma>d$ is a parameter of the model.
Viewing each element $(\ell,v)$ of $\mathbb{L}_d\times\mathbb{R}_+^*$ as a road in $\mathbb{R}^d$, with $v$ the speed limit on the line $\ell$, the atoms of the measure $\Pi$ are seen as the roads of a network.
Kendall shows that every pair of points in $\mathbb{R}^d$ can be connected using this random road network, by paths that respect the speed limits; thus, denoting by $T(x,y)$ the optimal connection time between points $x,y\in\mathbb{R}^d$, the function $T:(x,y)\mapsto T(x,y)$ defines a metric on $\mathbb{R}^d$.
By construction, the random metric $T$ is invariant in distribution under rotations and translations, and satisfies a scaling property: for every $x,y\in\mathbb{R}^d$, we have the equality in distribution ${T(x,y)\overset{\text{\tiny law}}{=}|x-y|^{(\gamma-d)/(\gamma-1)}\cdot T(0,e_d)}$, where $e_d=(0,\ldots,0,1)$ denotes the $d$-th vector of the canonical basis of $\mathbb{R}^d$.
(The model will be presented in more detail in Section \ref{secrappels}.)

Although the random metric space $\left(\mathbb{R}^d,T\right)$ is almost surely homeomorphic to the usual Euclidean $\mathbb{R}^d$, in this paper we prove the following, confirming a conjecture of Kahn (see \cite[Section 7]{kahn}).
\begin{thm}\label{thmhausdim}
Almost surely, the metric space $\left(\mathbb{R}^d,T\right)$ has Hausdorff dimension 
\[\frac{(\gamma-1)d}{\gamma-d}>d.\]
\end{thm}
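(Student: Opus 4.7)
The plan is to match upper and lower bounds on the Hausdorff dimension, both driven by the scaling relation. Set $\alpha=(\gamma-d)/(\gamma-1)$ and $D=d/\alpha=(\gamma-1)d/(\gamma-d)$; the scaling $T(x,y)\overset{\text{law}}{=}|x-y|^{\alpha}T(0,e_d)$ suggests that $T$ behaves locally like the Euclidean distance raised to the power $\alpha$, which inflates the dimension from $d$ to $D$. By translation invariance in law and the countable stability of Hausdorff dimension, it suffices to compute $\dim_H\bigl([0,1]^d,T\bigr)$.

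For the upper bound $\dim_H\leq D$, the first step would be to establish a positive moment estimate $\mathbb{E}\bigl[T(0,e_d)^p\bigr]<\infty$ for $p$ as large as needed. Scaling and Markov's inequality then yield $\mathbb{P}\bigl(T(x,y)>\lambda|x-y|^{\alpha}\bigr)\lesssim \lambda^{-p}$, after which a Kolmogorov-type chaining argument along a sequence of dyadic grids of $[0,1]^d$ would produce an almost-sure Hölder bound
\[T(x,y)\leq C(\omega)\,|x-y|^{\alpha}\bigl(\log(e+1/|x-y|)\bigr)^{\kappa},\qquad x,y\in[0,1]^d,\]
for some random $C(\omega)<\infty$ and some constant $\kappa>0$. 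Covering $[0,1]^d$ by roughly $\varepsilon^{-d}$ Euclidean cubes of side $\varepsilon$, each of $T$-diameter at most $C(\omega)\,\varepsilon^{\alpha}|\log\varepsilon|^{\kappa}$, one then concludes that the $s$-dimensional Hausdorff measure of $\bigl([0,1]^d,T\bigr)$ vanishes for every $s>D$.

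For the lower bound $\dim_H\geq D$, I would apply the energy method. The essential input is a negative moment bound $\mathbb{E}\bigl[T(0,e_d)^{-s}\bigr]<\infty$ for every $s<D$. Taking $\mu$ to be Lebesgue measure on $[0,1]^d$ and using scaling, the expected $s$-energy of $\mu$ with respect to $T$ is
\[\mathbb{E}\iint T(x,y)^{-s}\,d\mu(x)\,d\mu(y)=\mathbb{E}\bigl[T(0,e_d)^{-s}\bigr]\iint_{[0,1]^d\times[0,1]^d}|x-y|^{-\alpha s}\,dx\,dy,\]
which is finite as soon as $\alpha s<d$, i.e. $s<D$. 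Hence $\mu$ has almost surely finite $s$-energy for the metric $T$, and by the standard Frostman-type bound this gives $\dim_H\bigl([0,1]^d,T\bigr)\geq s$; letting $s\uparrow D$ concludes.

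The main obstacle is the two moment bounds on $T(0,e_d)$. Sufficient positive moments should be accessible from a quantitative version of Kendall's construction, controlling the time required to access a sufficiently fast road from a typical point. The negative moments, which must hold essentially up to the critical exponent $D$, appear markedly more delicate: one has to show $\mathbb{P}\bigl(T(0,e_d)<\varepsilon\bigr)$ decays faster than every $\varepsilon^s$ with $s<D$, despite the fact that the supremum of road speeds near the origin is almost surely infinite. Heuristically this requires a sharp balance between the Poisson intensity of nearby fast lines and the Euclidean distance one must travel at lower speed to reach them; reassuringly, this balance should produce exactly the exponent $D$, so that the two bounds meet.
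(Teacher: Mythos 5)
Your overall plan matches the paper's: an almost-sure H\"older estimate for the identity map $\bigl(\overline{B}(0,R),|\cdot|\bigr)\to\bigl(\overline{B}(0,R),T\bigr)$ to get the upper bound $\dim_H\leq D:=(\gamma-1)d/(\gamma-d)$, and the energy method applied to Lebesgue measure for the lower bound. The one difference is the route to the H\"older estimate. You would derive it from sufficiently high positive moments of $T(0,e_d)$ via a Kolmogorov-type chaining argument, while the paper (Proposition~\ref{propunifcontrolT}) gets it directly from Kendall's recursive construction of $\Pi$-paths, controlling the speeds $V^{x,y}_{\alpha|x-y|}$ uniformly over dyadic grids by a Borel--Cantelli argument; the moment input your route needs is in any case available from Kahn's tail estimate $\mathbb{P}(T^*_{0,1}>t)\leq J\exp[-\kappa t^{\gamma-1}]$ (equation~\eqref{eqkahn}), so both routes are viable and of comparable weight. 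You correctly identify the negative moment bound as the crux of the lower bound, and this is precisely what Theorem~\ref{thmquickco} delivers; proving it is the substantial work (a multiscale argument, Lemma~\ref{lemkahn} and Proposition~\ref{propmultiscale}), which your outline leaves open, as you acknowledge.

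One heuristic in your closing paragraph should be corrected, as it slightly misplaces where the exponent $D$ comes from. You suggest that the quick-connection probability $\mathbb{P}(T(0,e_d)\leq t)$ should decay exactly like $t^{D}$, ``so that the two bounds meet.'' In fact the sharp exponent is
\[
\sigma=\frac{(\gamma-1)(d+\gamma-2)}{\gamma-d},
\]
which exceeds $D$ strictly whenever $d\geq2$ (so $\gamma>2$). The binding constraint that produces $D$ on the lower-bound side is not the quick-connection exponent but the integrability threshold $\alpha s<d$ for $\iint|x-y|^{-\alpha s}\,dy\,dx$, exactly as in your displayed computation; the quick-connection estimate only needs to beat $t^{s}$ for each $s<D$, and $\sigma>D$ does so with room to spare. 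So your argument stands unchanged, but the ``sharp balance'' you invoke lands strictly above $D$, not at it.
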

This fractal property is reminiscent of the Brownian sphere, for which the Hausdorff dimension $4$ is greater than the ``topological dimension'' $2$ (see, e.g, \cite[Theorem 2 and Theorem 3]{geobrown}).

To compute the above Hausdorff dimension, we use the Lebesgue measure as mass distribution on $\left(\mathbb{R}^d,T\right)$.
This choice intuitively makes sense given that $T$ is invariant (in distribution) under rotations and translations.
Implementing the standard ``energy method'' leads us to estimating the \emph{quick connection probability}, i.e, the probability $\mathbb{P}(T(0,e_d)\leq t)$ for $0$ and $e_d$ --- two points at unit Euclidean distance --- to be connected by the road network in time $t$, as $t\to0^+$.
We obtain the following sharp estimate.
\begin{thm}\label{thmquickco}
There exists constants $c,C>0$ such that
\[\text{$c\cdot t^\sigma\leq\mathbb{P}(T(0,e_d)\leq t)\leq C\cdot t^\sigma$ for all $t\in[0,1]$,\quad where $\sigma=\frac{(\gamma-1)(d+\gamma-2)}{\gamma-d}$.}\]
\end{thm}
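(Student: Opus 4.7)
The scaling exponent $\alpha:=(\gamma-d)/(\gamma-1)$ sets the relevant geometric scales: to achieve $T(0,e_d)\leq t$ with small $t$, the natural optimal scenario is a road of speed $v\sim 1/t$ passing within Euclidean distance $r\sim t^{1/\alpha}$ of both endpoints. Indeed, the two approach times to such a road are then of order $r^\alpha=t$ by scale invariance, and the on-road travel takes time $\sim 1/v=t$. The plan is to turn this heuristic into matching polynomial bounds.

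For the lower bound, I would first compute that the $\mu_d$-measure of the set of lines lying within Euclidean distance $r$ of both $0$ and $e_d$ is of order $r^{2(d-1)}$ for small $r$: parametrising a line by a direction $u\in S^{d-1}$ and an orthogonal translation $p\in u^\perp$, the direction is constrained to an $r$-neighbourhood of $\pm e_d$ (factor $r^{d-1}$) and the translation to a region of volume $r^{d-1}$. Multiplying by the speed tail $\int_{1/t}^\infty v^{-\gamma}\,\mathrm{d}v\sim t^{\gamma-1}$, the expected number of atoms $(\ell,v)\in\Pi$ with $v\geq C_1/t$ and $\ell$ within distance $c_1 t^{1/\alpha}$ of both endpoints is of order $t^{2(d-1)/\alpha+(\gamma-1)}=t^\sigma$, and Poisson estimates give $\mathbb{P}(E(t))\geq ct^\sigma$ for the corresponding existence event $E(t)$. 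Conditioning on $E(t)$ and selecting one such ``highway'' atom, Slivnyak--Mecke leaves the rest of $\Pi$ undisturbed, and scale invariance shows that the travel time from $0$ to the closest point of the highway through the residual network is distributed as $t$ times a scale-invariant random variable, hence at most $t/3$ with positive probability (uniformly in $t$), and similarly at $e_d$. Combining the three time budgets yields the lower bound.

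For the upper bound, I would use a first-moment argument. Any curve from $0$ to $e_d$ with total time $\leq t$ covers Euclidean distance $\geq 1$, so it must traverse a road of speed $v^*\geq c/t$; call this fastest used road $R^*$. The time budget forces $R^*$ to be reachable from both endpoints in time $\leq t$, which combined with a lower bound on the approach time of the form $T(0,\ell)\geq c\cdot\mathrm{dist}(0,\ell)^\alpha$ (valid up to an exceptional event of probability $O(t^\sigma)$) forces $R^*$ within Euclidean distance $\leq C t^{1/\alpha}$ of both endpoints. The expected number of atoms $(\ell,v)\in\Pi$ satisfying these speed and proximity conditions is again $O(t^\sigma)$ by the same computation, so Markov's inequality concludes.

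The main obstacle is the approach-time lower bound just used: with probability $1-O(t^\sigma)$, no line at Euclidean distance $R\geq C t^{1/\alpha}$ from $0$ should be reachable in time less than $cR^\alpha$ through the network. I expect this to come from a multi-scale argument, partitioning space into dyadic annuli around $0$ and using scale invariance of $\Pi$ to control the joint distribution of the fastest lines at each scale, then summing annular contributions into a geometric series bounded by $t^\sigma$. This is also where one rules out multi-road ``staircase'' strategies in which no single road is close to both endpoints: each such strategy requires several independent lucky events and should contribute a probability of strictly smaller order.
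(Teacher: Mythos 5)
Your lower bound scheme is essentially that of the paper, which writes $T(0,e_d)\leq T^*_{0,r}+1/V^{0,e_d}_r+T^*_{e_d,r}$ with $r\sim t^{(\gamma-1)/(\gamma-d)}$; the paper uses the FKG inequality (all three events involved are increasing in $\Pi$) together with Kahn's tail estimate on $T^*_{0,1}$, which is a bit cleaner than conditioning on a highway via Slivnyak--Mecke (the latter does not by itself give joint control of the two approach events, so you would still need FKG or a second-moment argument on top). That part is sound in spirit.

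The upper bound, however, has a genuine gap where you invoke an ``approach-time lower bound $T(0,\ell)\geq c\,\mathrm{dist}(0,\ell)^\alpha$, valid up to an exceptional event of probability $O(t^\sigma)$.'' The exceptional probability for the analogous single-endpoint statement is \emph{not} $O(t^\sigma)$. The deterministic obstruction $T(0,e_d)\geq r/V^0_r$ forces, on the event $(T(0,e_d)\leq t)$, that $V^0_{r_k}\geq r_k/t$ for all dyadic scales $r_k\in[t^{(\gamma-1)/(\gamma-d)},1]$; the probability of this single-endpoint multiscale event is of order $t^{(\gamma-1)^2/(\gamma-d)}$, and since $(\gamma-1)^2<(\gamma-1)(d+\gamma-2)$ for $d\geq2$, this is strictly larger than $t^\sigma$. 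So conditioning away the bad set and then running a first-moment count over candidate highways cannot recover the extra factor $(\gamma-1)(d-1)/(\gamma-d)$ in the exponent. That missing factor comes precisely from requiring the constraint \emph{jointly} at $0$ and at $e_d$: the two single-endpoint events are strongly positively correlated (one fast road close to both points serves both), so the joint probability is $\asymp t^\sigma$, intermediate between the single-endpoint probability and the naive product. Estimating that joint probability is the crux, and the paper does it with a genuinely two-point multiscale lemma (Lemma~\ref{lemkahn}, item~\ref{lemkahn2}): a recursive decomposition over pairs of scales $(k_i,l_i)$ that peels off the first scale at which the ``responsible'' fast road drops out near each endpoint, so that the Poisson process restricted to disjoint annular shells of lines gives the needed independence. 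Your ``ruling out multi-road staircase strategies'' is exactly the content of this lemma, but it is not a lower-order correction that can be waved away---nearby scales are strongly correlated, and the decomposition into disjoint shells is what makes the geometric-series bound legitimate. As stated, your outline does not identify or supply this mechanism, and the framing via a small-probability bad event is circular: the probability of the bad event is precisely what the theorem must estimate.
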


Then, we pursue our analysis by investigating the Lebesgue measure of balls for the metric $T$.
Denoting by ${\overline\Gamma(x,t)=\left\{y\in\mathbb{R}^d:T(x,y)\leq t\right\}}$ the closed $T$-ball centered at $x\in\mathbb{R}^d$ and with radius $t>0$, we are interested in the behaviour of the quantity $\lambda\left(\overline\Gamma(x,t)\right)$ as $t\to0^+$.
In particular, we prove the following, where $\mathcal{L}$ denotes the union of the lines $\ell$ over the roads $(\ell,v)$ of $\Pi$.
\begin{thm}\label{thmvolboules}
Almost surely, the following holds:
\begin{enumerate}
\item for $\lambda$-almost every $x\in\mathbb{R}^d$, we have
\[\text{$\lambda\left(\overline\Gamma(x,t)\right)=t^{s^*+o(1)}$ as $t\to0^+$,\quad where $s^*=\frac{(\gamma-1)d}{\gamma-d}$,}\]
\item for every $x\in\mathcal{L}$, we have 
\[\text{$\lambda\left(\overline\Gamma(x,t)\right)=t^{s_*+o(1)}$ as $t\to0^+$,\quad with $s_*=\frac{(\gamma-1)d}{\gamma-d}-\frac{d-1}{\gamma-d}<s^*$,}\]
\item for every $x\in\mathbb{R}^d$, we have 
\[t^{s^*+o(1)}\leq\lambda\left(\overline\Gamma(x,t)\right)\leq t^{s_*-o(1)}\quad\text{as $t\to0^+$.}\]
\end{enumerate}
\end{thm}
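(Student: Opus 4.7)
\emph{Mean volume.} The natural starting point is the Fubini identity
\[
\mathbb{E}\bigl[\lambda(\overline\Gamma(x,t))\bigr]=\int_{\mathbb{R}^d}\mathbb{P}\bigl(T(x,y)\leq t\bigr)\,dy,
\]
which I would evaluate as $\asymp t^{s^*}$ using translation invariance, the scaling relation $T(x,y)\overset{\text{\tiny law}}{=}|x-y|^{(\gamma-d)/(\gamma-1)}T(0,e_d)$ and Theorem~\ref{thmquickco}; the bulk of the integral comes from $y$ at Euclidean distance $\asymp t^{(\gamma-1)/(\gamma-d)}$ from $x$. Under the Palm distribution at a typical point of a typical line --- obtained by adding a single marked line $\ell$ of speed $v$ through $x$ --- I would split $T(x,y)$ into the minimum of paths using $\ell$ and paths ignoring it. The ``ignore'' contribution remains of order $t^{s^*}$, while the ``use~$\ell$'' contribution dominates and should give $\mathbb{E}_x\bigl[\lambda(\overline\Gamma(x,t))\bigr]\asymp v\cdot t^{s_*}$: its bulk is the integral over the tube of cross-radius $\asymp t^{(\gamma-1)/(\gamma-d)}$ around the segment of $\ell$ of length $\asymp vt$.

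\emph{Upper bounds.} For any fixed $x$, Markov's inequality and Borel--Cantelli along $t=2^{-n}$, combined with monotonicity of $\overline\Gamma(x,\cdot)$ in $t$, yield $\lambda(\overline\Gamma(x,t))\leq t^{s^*-\varepsilon}$ for small $t$, almost surely; Fubini then transfers this to Lebesgue-a.e.\ $x$ simultaneously, which is the upper bound in part (i). The Palm analysis yields the analogous $\lambda(\overline\Gamma(x,t))\leq t^{s_*-\varepsilon}$ at a Palm-typical point of a typical line. Promoting this to \emph{every} $x\in\mathcal{L}$ uses Campbell-style integration over the countable family of lines, applied to a countable dense subset of points on each, combined with the Lipschitz-type inclusion $\overline\Gamma(x',t)\subseteq\overline\Gamma(x,t+T(x,x'))$ (immediate from the triangle inequality) and the continuity of $T$. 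For the upper bound in part (iii), the same inclusion transports any $x\notin\mathcal{L}$ to a nearby $y\in\mathcal{L}$ at time cost $T(x,y)=o(t)$; such $y$ exists because $\mathcal{L}$ is almost surely dense in $\mathbb{R}^d$ and $T$ is continuous.

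\emph{Lower bounds.} The key observation is that for any $x$, the random radius $M(x,r):=\sup_{|y-x|\leq r}T(x,y)$ satisfies $M(x,r)\overset{\text{\tiny law}}{=}r^{(\gamma-d)/(\gamma-1)}M(0,1)$ with $M(0,1)$ a.s.\ finite (by continuity of $T$). Markov combined with Borel--Cantelli along a dyadic sequence $r=2^{-n}$ then gives $M(x,r)\leq r^{(\gamma-d)/(\gamma-1)-\varepsilon}$ for small $r$, from which inverting yields $\lambda(\overline\Gamma(x,t))\geq t^{s^*+O(\varepsilon)}$ for small $t$. A countable dense set + continuity of $T$ extends this to every $x\in\mathbb{R}^d$, proving the lower bound in part (iii) and, \emph{a fortiori}, in part (i). For $x\in\ell\in\mathcal{L}$ of speed $v$, the triangle inequality strengthens this to the inclusion
\[
\overline\Gamma(x,t)\supseteq\bigcup_{s\in[0,vt]}\overline\Gamma(y_s,t-s/v),
\]
with $y_s$ running along $\ell$. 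Applying the previous lower bound at each $y_s$ (with constant uniform on the compact segment involved) yields a tubular region of length $\asymp vt$ and cross-radius $\asymp (t-s/v)^{(\gamma-1)/(\gamma-d)}$, whose volume integrates to $\gtrsim v\cdot t^{s_*}$.

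\textbf{Main obstacle.} The principal difficulty will be upgrading the Palm-almost-sure upper bound of part (ii) to an \emph{everywhere} statement on $\mathcal{L}$, and then on $\mathbb{R}^d$ for part (iii). This requires a Borel--Cantelli argument along a carefully chosen countable net in space-time together with a continuity estimate for $\lambda(\overline\Gamma(\cdot,t))$ which does not disturb the exponent $s_*$ in the limit $t\to 0^+$. A secondary difficulty is the Palm-mean estimate $v\cdot t^{s_*}$ itself: the ``use~$\ell$'' contribution must be separated quantitatively from the rest, which hinges on a conditional analogue of Theorem~\ref{thmquickco} for paths constrained to start from the marked line.
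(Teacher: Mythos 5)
Your skeleton correctly identifies the ingredients for the lower bounds and for the upper bound in part~(i), and these match the paper's route (Propositions~\ref{propboultypbs}, \ref{propallballslb}, \ref{propballsroadslb}): the Fubini/Markov/Borel--Cantelli argument along $t=2^{-n}$ gives the $\lambda$-a.e.\ upper bound of part~(i), and the tubular inclusion $\overline\Gamma(x,t)\supset\bigcup_{y\in\ell,\,|x-y|\leq vt/2}\overline\Gamma(y,t/2)$ combined with an \emph{everywhere} lower bound $\overline\Gamma(y,t)\supset\overline{B}(y,r(t))$ yields part~(ii)'s lower bound. Note, however, that for the lower bound you will need the uniform H\"older estimate (Proposition~\ref{propunifcontrolT}), not merely ``Markov for fixed $x$ plus a countable dense set and continuity'': the random threshold below which $M(x,r)\leq r^{(\gamma-d)/(\gamma-1)-\varepsilon}$ holds depends on $x$, and its infimum over a dense set can be $0$. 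The paper handles this by a scale-dependent net with exponential tail bounds for $V^{x,y}_r$ (Lemma~\ref{lemunifcontrolV}).

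The genuine gap is the \emph{everywhere} upper bound in parts~(ii) and~(iii). A first-moment Markov estimate is quantitatively too weak to push through a union bound over a net: to use your Lipschitz inclusion $\overline\Gamma(x,t)\subseteq\overline\Gamma(x',t+T(x,x'))$ with $T(x,x')=o(t)$, the Euclidean spacing of the net at scale $t$ must be $\ll t^{(\gamma-1)/(\gamma-d)}$, giving $\gg t^{-s^*}$ net points in a bounded region; but Markov only gives $\mathbb{P}\bigl(\lambda(\overline\Gamma(x',t))>t^{s_*-\varepsilon}\bigr)\lesssim t^{s^*-s_*+\varepsilon}$ per point, so the union bound is $\gtrsim t^{-s_*+\varepsilon}$, which diverges. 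Higher moments might in principle close this, but the paper proceeds entirely differently: Proposition~\ref{propallballsub} reduces by scaling to controlling $\Lambda^R_1=\sup_{x\in\overline{B}(0,R)}\lambda(\overline\Gamma(x,1))$ for $R\to\infty$, and Lemma~\ref{lemdet} is a purely \emph{deterministic} multi-scale covering argument: under almost-sure hypotheses (a global speed bound plus bounds on the number of roads of $\Pi_{R^{\alpha_{i+1}}}$ through balls of radius $R^{\alpha_i}$ at a geometrically decreasing cascade of exponents $\alpha_i$), one iteratively covers any $\overline\Gamma(x,1)$ by a tree of balls whose count at each level is controlled, bootstrapping to $\Lambda^R_1\lesssim R^{(d-1)/(\gamma-1)+2\varepsilon}$. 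Only then does Borel--Cantelli enter, to verify the deterministic hypotheses with summable failure probability. Your Palm/Campbell framework also only yields a ``for Palm-typical points'' or ``in probability'' statement, which is the content of the paper's Proposition~\ref{propballsroads} in Subsection~3.1, and does not by itself upgrade to an a.s.\ statement for \emph{every} $x\in\mathcal{L}$, let alone every $x\in\mathbb{R}^d$.
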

In words, although for \emph{typical} points $x$ we have $\lambda\left(\overline\Gamma(x,t)\right)\approx t^{s^*}$ for small $t$, with $s^*$ the Hausdorff dimension of $\left(\mathbb{R}^d,T\right)$, there exists special points $x$ --- namely points \emph{on roads} --- for which $\lambda\left(\overline\Gamma(x,t)\right)\approx t^{s_*}$ for small $t$, with a smaller exponent $s_*$ (see the opening of Section \ref{secvolboules} for a heuristic).
This \emph{multifractal} property of the measured metric space $\left(\mathbb{R}^d,T,\lambda\right)$ distinguishes it for instance from the Brownian sphere equipped with its volume measure, for which the single exponent $4$ describes the behaviour of the volume of balls around \emph{all} points (see \cite[Proposition 6.1, equations $(38)$ and $(39)$, and Theorem 7.2]{legall}).
Although it can be the case in $\left(\mathbb{R}^d,T,\lambda\right)$ that other points than points on roads exhibit a different behaviour than typical points (see the results of Section \ref{secdisc} below), the third item of Theorem \ref{thmvolboules} above tells us that the two behaviours of typical points and points on roads are extremal.

\paragraph{Organisation of the paper.}

In a first section, we present the model in detail and recall the construction of Kendall's metric $T$.
Then, we present the proof of Theorem \ref{thmhausdim} and Theorem \ref{thmquickco} in Section \ref{secthm12}.
In Section \ref{secvolboules}, we study the Lebesgue measure of balls for the metric $T$, and --- in particular --- prove Theorem \ref{thmvolboules}.
We conclude this paper in Section \ref{secdisc}, where we discuss some natural questions our results leave open.

\begin{figure}[ht]
\centering
\begin{tabular}{ccc}
\includegraphics[width=0.25\linewidth]{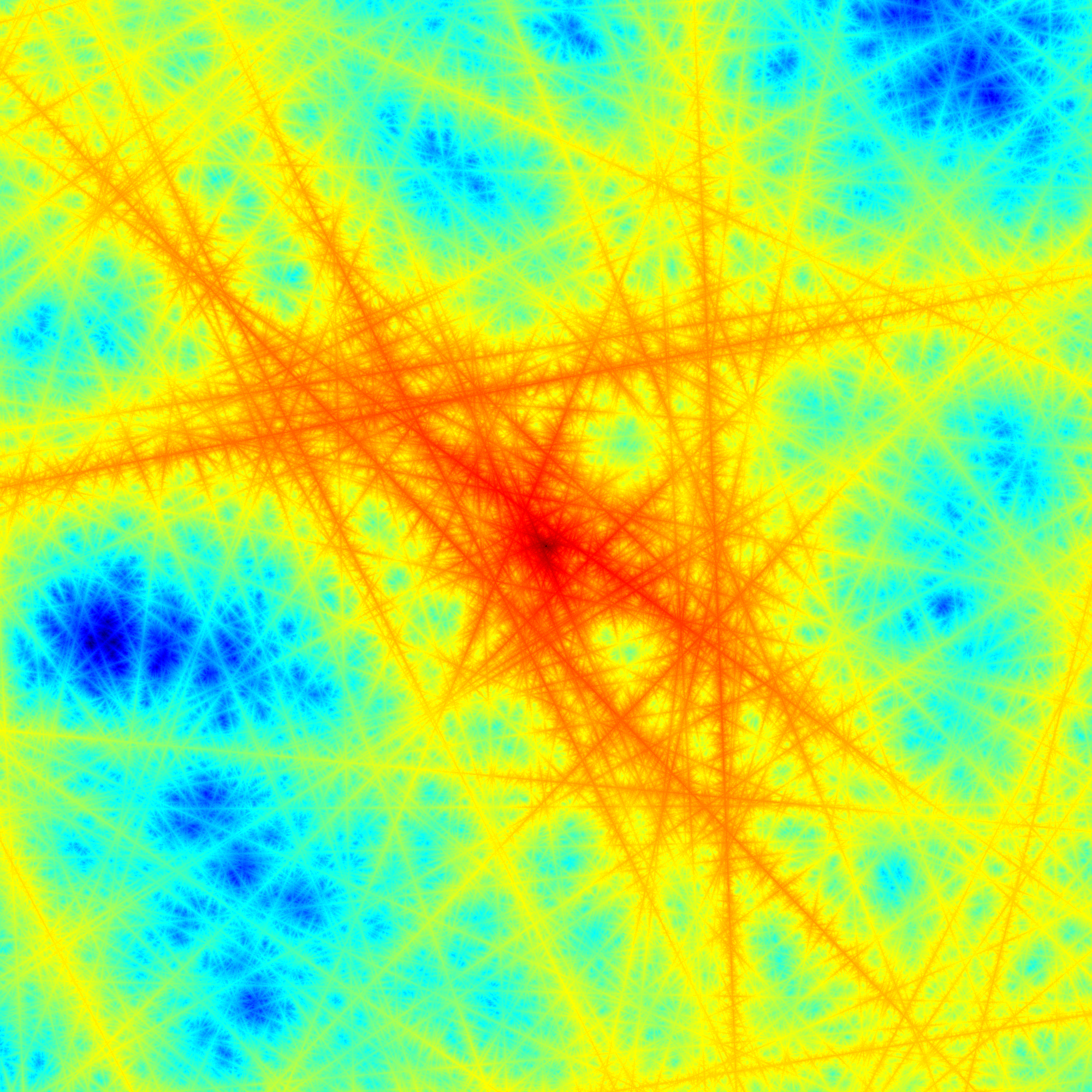}&\includegraphics[width=0.25\linewidth]{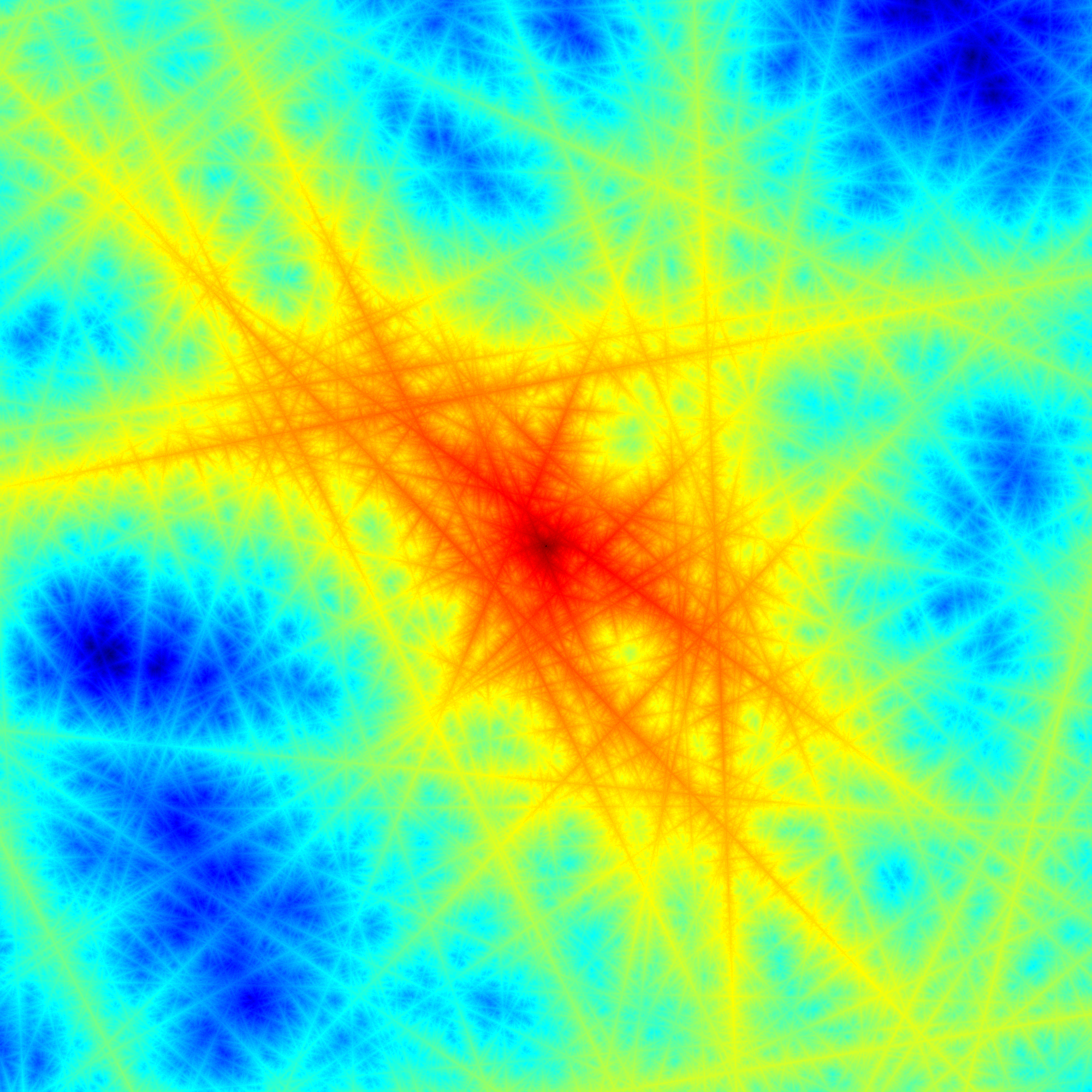}&\includegraphics[width=0.25\linewidth]{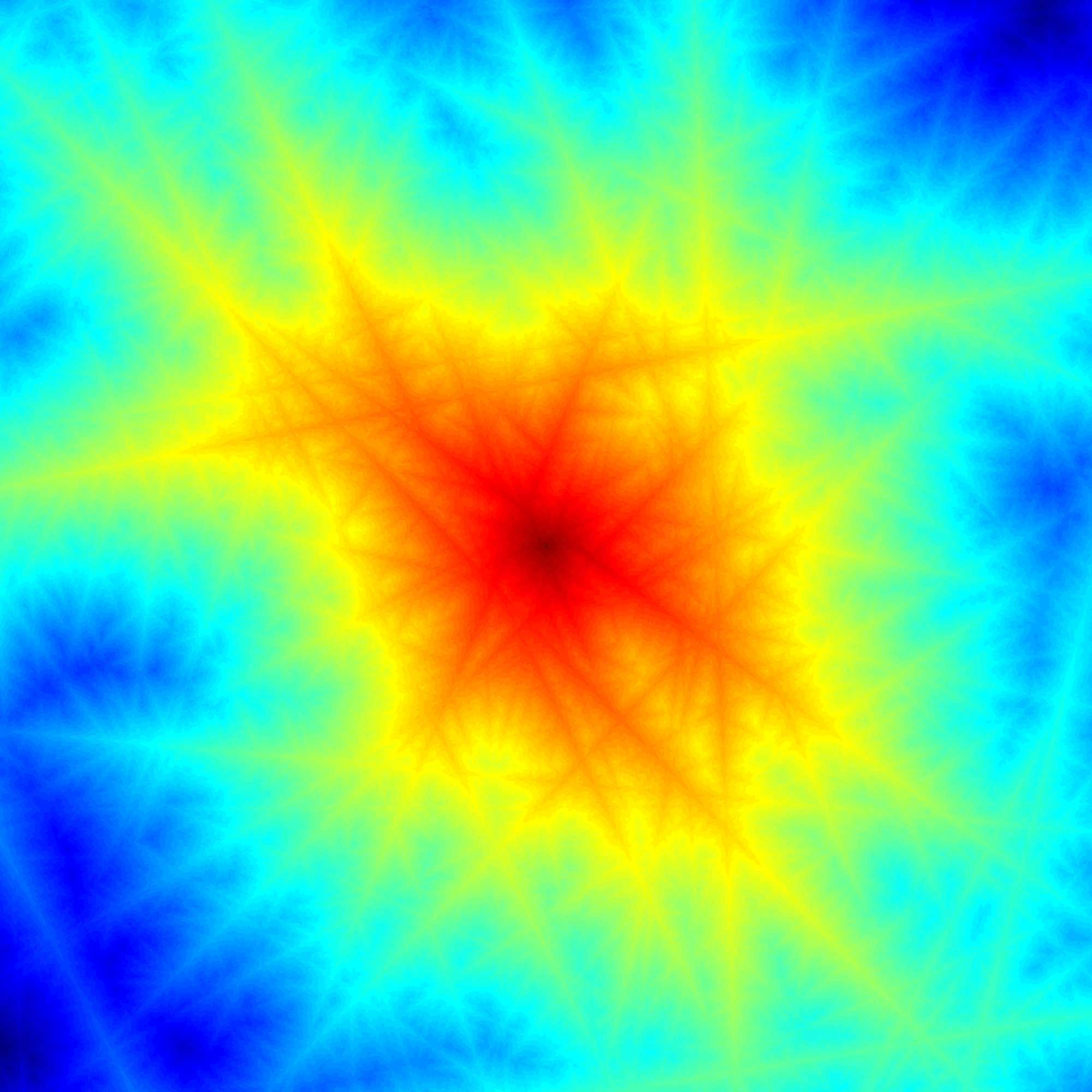}
\end{tabular}
\caption{Simulations by Arvind Singh of the distance-to-the-origin function, for the metric $T$ induced by the Poisson process $\Pi$, in dimension $d=2$. In each picture, the origin is the dark (red) point at the centre, and all points are coloured (from red to blue) according to their distance to the origin: the isocolour sets correspond to level sets for the distance-to-the-origin function. This naturally reveals the roads of the network. From left to right, the parameter $\gamma$ increases (a natural coupling for the speed limits allows to use the same lines).}\label{figsimu}
\end{figure}

\section{Introduction to Kendall's metric}\label{secrappels}

In this section, we present the model in detail, recalling the construction of Kendall's metric.
For more details, we refer to the original paper \cite{kendall}.
See also Kahn's subsequent paper \cite{kahn}.

\subsection{Definition of the Poisson process of roads}

We start by introducing the Poisson process of roads $\Pi$.
We call \emph{road} a couple $(\ell,v)$, with $\ell\subset\mathbb{R}^d$ an affine line, and $v\in\mathbb{R}_+^*$ the speed limit on $\ell$.
Denoting by $\mathbb{L}_d$ the set of affine lines in $\mathbb{R}^d$, the process $\Pi$ will be defined as a Poisson random measure on the space of roads $\mathbb{L}_d\times\mathbb{R}_+^*$.
To describe its intensity measure, let us recall the following.

\paragraph{The space of lines.}
In this paragraph, we briefly present the usual topology and invariant measure on the affine Grassmannian $\mathbb{L}_d$. We refer to \cite[Section 13.2]{stochintgeo} for more details, see also \cite[Section 8.2]{kendalletal}.
The space of lines can conveniently be described as follows.
Choosing a reference line, for instance the line $\ell_d=\{xe_d=(0,\ldots,0,x);~x\in\mathbb{R}\}$, and denoting by ${\ell_d^\perp=\mathbb{R}^{d-1}\times\{0\}}$ its orthogonal hyperplane, consider the surjective mapping
\[\begin{matrix}
\Phi:&\ell_d^\perp\times\mathbf{SO}\left(\mathbb{R}^d\right)&\longrightarrow&\mathbb{L}_d\\
&(w,g)&\longmapsto&g(w+\ell_d).
\end{matrix}\]
The space $\mathbb{L}_d$ is endowed with the finest topology that makes $\Phi$ into a continuous map (final topology), and with the corresponding Borel $\sigma$-algebra.

\subparagraph{Invariant measure.} 
On $\mathbb{L}_d$ there exists a unique, up to multiplicative constant, locally finite Borel measure which is invariant under rotations and translations.
It can be described as the pushforward $\mu_d=\Phi_*\left[\lambda_d^\perp\otimes\eta\right]$ of the measure $\lambda_d^\perp\otimes\eta$ by the map $\Phi$, where $\lambda_d^\perp$ denotes the Lebesgue measure on the $(d-1)$-dimensional Euclidean space $\ell_d^\perp$, and $\eta$ is the Haar probability on the compact group $\mathbf{SO}\left(\mathbb{R}^d\right)$.
The Borel measure $\mu_d$ satisfies the following invariance and scaling property: for $x\in\mathbb{R}^d$, $r>0$ and $h\in\mathbf{SO}\left(\mathbb{R}^d\right)$, the pushforward of $\mu_d$ by the continuous function
\[\begin{matrix}
f:&\mathbb{L}_d&\longrightarrow&\mathbb{L}_d\\
&\ell&\longmapsto&x+r\cdot h(\ell)\\
\end{matrix}\]
is the measure $f_*\mu_d=r^{-(d-1)}\cdot\mu_d$.
For a compact subset $K\subset\mathbb{R}^d$, we denote by $[K]=\{\ell\in\mathbb{L}_d:\ell\cap K\neq\emptyset\}$ the (closed) subset of lines that hit $K$.
If $L\subset\mathbb{R}^d$ is another compact subset, we simply write $[K;L]$ instead of $[K]\cap[L]$.
Writing ${\overline{B}(x,r)=\left\{y\in\mathbb{R}^d:|x-y|\leq r\right\}}$ for the closed Euclidean ball centered at $x\in\mathbb{R}^d$ with radius $r>0$, we have 
\begin{equation}\label{eqnormmud}
\mu_d\left[\overline{B}(x,r)\right]=\upsilon_{d-1}r^{d-1},
\end{equation}
where the constant $\upsilon_s=\frac{\pi^{s/2}}{\Gamma(s/2+1)}$ corresponds to the Lebesgue measure of the unit Euclidean ball in $\mathbb{R}^s$.
In complement to \eqref{eqnormmud}, we have the following.
\begin{lem}\label{lemdroitesboules}
There exists constants $C,c>0$ such that for every $x\neq y\in\mathbb{R}^d$:
\[\mu_d\left[\overline{B}(x,r)~;~\overline{B}(y,s)\right]\leq C\cdot\frac{r^{d-1}\cdot s^{d-1}}{|x-y|^{d-1}}\quad\text{for all $r,s>0$,}\]
and
\[\mu_d\left[\overline{B}(x,r)~;~\overline{B}(y,s)\right]\geq c\cdot\frac{r^{d-1}\cdot s^{d-1}}{|x-y|^{d-1}}\quad\text{for all $0<r,s\leq|x-y|$.}\]
\end{lem}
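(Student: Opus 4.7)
By the invariance of $\mu_d$ under isometries of $\mathbb{R}^d$, I may replace $x,y$ by $0$ and $De_d$ with $D=|x-y|$. My plan is to use the following Fubini-type decomposition, which follows from the $\mathbf{SO}(\mathbb{R}^d)$-invariance of $\eta$ together with the definition $\mu_d=\Phi_*(\lambda_d^\perp\otimes\eta)$: for all Borel $K,L\subset\mathbb{R}^d$,
\[\mu_d[K;L]=c_d\int_{S^{d-1}}\lambda_{u^\perp}\bigl(p_u(K)\cap p_u(L)\bigr)\,\mathrm{d}u,\]
where $p_u$ is the orthogonal projection onto $u^\perp$, $\lambda_{u^\perp}$ the $(d-1)$-dimensional Lebesgue measure there, $\mathrm{d}u$ the uniform measure on $S^{d-1}$, and $c_d>0$ a normalising constant. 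For our two balls, this reduces the problem to estimating the area of the intersection in $u^\perp$ of two $(d-1)$-discs of radii $r$ and $s$ whose centres lie at distance $D\sin\alpha$, where $\alpha=\angle(u,e_d)$, and then integrating over $u\in S^{d-1}$.

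For the upper bound, the cross-sectional area is trivially at most $\upsilon_{d-1}\min(r,s)^{d-1}$ and vanishes unless $D\sin\alpha\leq r+s$. The uniform measure of $\{u:D\sin\alpha\leq r+s\}$ is at most $C\min\bigl(1,((r+s)/D)^{d-1}\bigr)$, as one sees from the disintegration $\mathrm{d}u\propto(\sin\alpha)^{d-2}\,\mathrm{d}\alpha\,\mathrm{d}\omega$ of the surface measure. Combining these and using $\min(r,s)\cdot(r+s)\leq 2rs$ yields the claimed upper bound when $D\geq r+s$; in the remaining regime $D\leq r+s$, the trivial estimate $\mu_d[\,\cdot\,;\cdot\,]\leq\mu_d[\overline{B}(0,r)]=\upsilon_{d-1}r^{d-1}$ is itself bounded by a constant times $r^{d-1}s^{d-1}/D^{d-1}$.

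For the lower bound, I assume $0<r\leq s\leq D$ and split into two regimes. If $s\leq 2r$, I restrict the $u$-integration to directions with $D\sin\alpha\leq r/2$: the centre of the second disc then lies at distance $\leq r/2$ from the origin, so the intersection of the two $(d-1)$-discs contains a disc of radius $r/2$ and area $\gtrsim r^{d-1}$; the set of such $u$ has uniform measure $\gtrsim(r/D)^{d-1}$, whence $\mu_d[\,\cdot\,;\cdot\,]\gtrsim r^{2(d-1)}/D^{d-1}\asymp r^{d-1}s^{d-1}/D^{d-1}$ (as $r$ and $s$ are then comparable). If instead $s>2r$, I restrict to $u$ with $D\sin\alpha\leq s/2$: the disc of radius $r$ around $0$ is then contained in the other disc (since $r+s/2\leq s$), giving intersection area $\upsilon_{d-1}r^{d-1}$; the set of such $u$ has uniform measure $\gtrsim(s/D)^{d-1}$, again producing the claimed bound. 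The step I expect to require the most care is deriving the Fubini decomposition with the correct normalising constant from the $\Phi$-parametrisation; once this standard integral-geometric identity is in place, the upper bound is immediate and the lower bound reduces to the elementary case split above.
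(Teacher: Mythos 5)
Your argument is correct in substance, but it takes a genuinely different route from the paper's. The paper proves both bounds by pure covering arguments: for the lower bound it covers the sphere $\partial B(x,|x-y|)$ by $s$-balls with $s$-separated centres and observes that every line through $\overline{B}(x,r)$ must hit at least two of them; for the upper bound it takes a maximal $3s$-separated net on $\partial B(x,3|x-y|)$ and argues that a line through $\overline{B}(x,r)$ can hit only boundedly many of the resulting $s$-balls. Integrating the resulting indicator inequalities against $\mu_d$ and using only \eqref{eqnormmud} then gives both estimates. You instead push $\mu_d$ through the parametrisation $\Phi$ to obtain the Crofton-type identity $\mu_d[K;L]=\int_{S^{d-1}}\lambda_{u^\perp}\bigl(p_u(K)\cap p_u(L)\bigr)\,\mathrm{d}u$ (with $\mathrm{d}u$ the uniform probability; the constant $c_d$ is in fact $1$), and reduce to estimating the area of the intersection of two $(d-1)$-discs of radii $r,s$ at distance $D\sin\alpha$, integrated against $(\sin\alpha)^{d-2}\,\mathrm{d}\alpha$. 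This is the ``direct computation'' alternative the paper itself alludes to in the opening sentence of its proof. Your approach buys a clean reduction to elementary planar geometry at the cost of establishing the projection formula; the paper's covering argument uses nothing beyond the invariance of $\mu_d$ and \eqref{eqnormmud}. Your case analysis for the lower bound is correct as written. One small slip in the upper bound: in the regime $D\leq r+s$, you should bound by the measure of lines through the \emph{smaller} ball, $\mu_d[\,\cdot\,;\cdot\,]\leq\upsilon_{d-1}\min(r,s)^{d-1}$, and then use $D\leq r+s\leq 2\max(r,s)$ to get $\min(r,s)^{d-1}\leq 2^{d-1}r^{d-1}s^{d-1}/D^{d-1}$; as written you used $\upsilon_{d-1}r^{d-1}$, which fails if $r>s$ (e.g.\ $r\gg D\gg s$). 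This is trivially fixed and does not affect the soundness of the approach.
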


For the convenience of the reader, a proof is provided in the \nameref{appendix}.

\paragraph{The Poisson process of roads $\Pi$.}

Fix a parameter $\gamma>d$, and let $\Pi$ be a Poisson random measure with intensity ${c\cdot\mu_d\otimes v^{-\gamma}\mathrm{d}v}$ on $\mathbb{L}_d\times\mathbb{R}_+^*$, where the normalising constant $c=\upsilon_{d-1}^{-1}\cdot(\gamma-1)$ is chosen so as to have, in view of \eqref{eqnormmud}:
\begin{equation}\label{eqnorm}
c\cdot\mu_d\left[\overline{B}(x,r)\right]\cdot\int_{v_0}^\infty\frac{\mathrm{d}v}{v^\gamma}=r^{d-1}\cdot v_0^{-(\gamma-1)}\quad\text{for all $x\in\mathbb{R}^d$, $r>0$ and $v_0\in\mathbb{R}_+^*$.}
\end{equation}
The atoms of the measure $\Pi$ are seen as the roads of a network: for $(\ell,v)\in\mathbb{L}_d\times\mathbb{R}_+^*$, the positive real number $v$ represents the speed limit on the line $\ell$, and we say that $(\ell,v)$ is a road of $\Pi$ when $(\ell,v)$ is an atom of the measure $\Pi$.
We also use the notation $\Pi$ for the set of atoms of the measure $\Pi$.
For instance, we write $\mathcal{L}=\bigcup_{(\ell,v)\in\Pi}\ell$ for the union of the lines $\ell$ over the roads $(\ell,v)$ of $\Pi$.
For $v_0\in\mathbb{R}_+^*$, we denote by $\Pi_{v_0}$ the restriction of the Poisson random measure $\Pi$ to the Borel subset $\mathbb{L}_d\times[v_0,\infty[$.

\subparagraph{First properties.}
The road network generated by the process almost surely has the following properties:
\begin{enumerate}
\item[(/)] we have $\Pi\left(\mathbb{L}_d\times\{v\}\right)\leq1$ for all $v\in\mathbb{R}_+^*$,
\item[$(\varnothing)$] for every $R>0$ and $v_0\in\mathbb{R}_+^*$, there are only finitely many roads of $\Pi_{v_0}$ that pass through the ball $\overline{B}(0,R)$,
\item[$(\sharp)$] for every $x\in\mathbb{R}^d$ and $r>0$, there are infinitely many roads of $\Pi$ that pass through the ball $\overline{B}(x,r)$.
\end{enumerate}
In addition,
\begin{enumerate}
\item[$(\flat)$] for any given $x\in\mathbb{R}^d$, almost surely no road of $\Pi$ passes through $x$,
\item[$(\flat\flat)$] in dimensions $d\geq3$, almost surely the roads of $\Pi$ do not intersect,
\item[$(*)$] in dimension $d=2$, almost surely there is no crossing of three roads from $\Pi$.
\end{enumerate}
For the convenience of the reader, a proof of $(\flat\flat)$ and $(*)$ is provided in the \nameref{appendix}.

\subparagraph{Speed limits.}
The speed limits on the lines naturally extend to a (random) speed-limit function $V:\mathbb{R}^d\rightarrow\mathbb{R}_+$, defined by:
\[V(x)=\sup\{v;~(\ell,v)\in\Pi:\ell\ni x\}\quad\text{for all $x\in\mathbb{R}^d$,}\]
with the convention $\sup\emptyset=0$.
If no road of $\Pi$ passes through $x$ then $V(x)=0$, otherwise $x$ lies on at most two roads of $\Pi$, and $V(x)$ corresponds to the maximal speed limit.
For $x\in\mathbb{R}^d$ and $r>0$, we denote by $V^x_r=\sup_{\overline{B}(x,r)}V$ the speed limit of the fastest road passing through the ball $\overline{B}(x,r)$.
For every $v_0\in\mathbb{R}_+^*$, we have
\begin{equation}\label{eqVxr}
\mathbb{P}\left(V^x_r\geq v_0\right)=\mathbb{P}\left(\Pi\left(\left[\overline{B}(x,r)\right]\times[v_0,\infty[\right)>0\right)=1-\exp\left[-r^{d-1}\cdot v_0^{-(\gamma-1)}\right].
\end{equation}
In particular, we have $V^x_r\overset{\text{\tiny law}}{=}r^{(d-1)/(\gamma-1)}\cdot V^0_1$. 
Let us keep in mind that $V^x_r$ is of order $r^{(d-1)/(\gamma-1)}$.
For $x,y\in\mathbb{R}^d$ and $r,s>0$, we denote by $V^{x,y}_{r,s}$ the speed limit of the fastest road passing through both balls $\overline{B}(x,r)$ and $\overline{B}(y,s)$, writing $V^{x,y}_r$ instead of $V^{x,y}_{r,r}$ for short.

\subsection{Construction of the metric $T$}

In this subsection, we recall Kendall's construction of the metric $T$.
The distance $T(x,y)$ will be given by the optimal connection time between points $x,y\in\mathbb{R}^d$, using the random road network generated by $\Pi$, respecting the speed limits.

\paragraph{$\Pi$-paths.}\label{parpipaths}

To travel on the road network generated by the process, we use Kendall's $\Pi$-paths.
The reason for their definition is that one wants to connect every pair of points in $\mathbb{R}^d$ by continuous paths that come with a notion of instantaneous speed, and the usual piecewise $C^1$ paths fail at that task because of the properties $(\flat)$ and $(\flat\flat)$ of the road network.
We call \emph{regular path} a continuous function $\xi:[0,T]\rightarrow\mathbb{R}^d$ for which there exists an integrable function $\dot\xi:[0,T]\rightarrow\mathbb{R}^d$ such that 
\[\xi(t)=\xi(0)+\int_0^t\dot\xi(s)\mathrm{d}s\quad\text{for all $t\in[0,T]$.}\]
Note that by the Lebesgue differentiation theorem (see, e.g, \cite[Theorem 7.7]{rudin}), the instantaneous speed $\dot\xi$ of a regular path $\xi$ can be recovered as ${\dot\xi(t)=\lim_{\delta\to0^+}(\xi(t+\delta)-\xi(t-\delta))/(2\delta)}$ for almost every $t\in{]0,T[}$.
Now, given a realisation of $\Pi$, a \emph{$\Pi$-path} is a regular path $\xi:[0,T]\rightarrow\mathbb{R}^d$ which respects the speed limits, i.e, such that
\[\left|\dot\xi(t)\right|\leq V(\xi(t))\quad\text{for almost every $t\in[0,T]$.}\]

\subparagraph{A random metric on $\mathbb{R}^d$.}

Kendall's \cite[Theorem 3.6]{kendall} shows that almost surely, every pair of points in $\mathbb{R}^d$ can be connected by a $\Pi$-path. 
This is not trivial in view of properties $(\flat)$ and $(\flat\flat)$ of the road network, and it gives meaning to the definition:
\begin{equation}\label{eqoptconnt}
T(x,y)=\inf\left\{T>0:\text{there exists a $\Pi$-path $\xi:[0,T]\rightarrow\mathbb{R}^d$ connecting $x$ to $y$}\right\}\quad\text{for all $x,y\in\mathbb{R}^d$.}
\end{equation}
Then, it is not difficult to check that the (random) function
\[\begin{matrix}
T:&\mathbb{R}^d\times\mathbb{R}^d&\longrightarrow&\mathbb{R}_+\\
&(x,y)&\longmapsto&T(x,y)
\end{matrix}\]
is a metric.
At least, the fact that $T(x,x)=0$ for all $x\in\mathbb{R}^d$, the symmetry, and the triangle inequality, are immediate.
For the positivity, consider the following simple argument, which is illustrated in Figure \ref{figkahn}.
Fix a realisation of $\Pi$:
any $\Pi$-path connecting points $x\neq y\in\mathbb{R}^d$ has to travel an Euclidean distance at least $r=|x-y|$ inside the ball $\overline{B}(x,r)$, where the speed is limited at $V^x_r$.
Thus, the optimal connection time $T(x,y)$ is bounded from below by the ratio $r/V^x_r$, which is positive.
\begin{figure}[ht]
\centering
\includegraphics[width=0.45\linewidth]{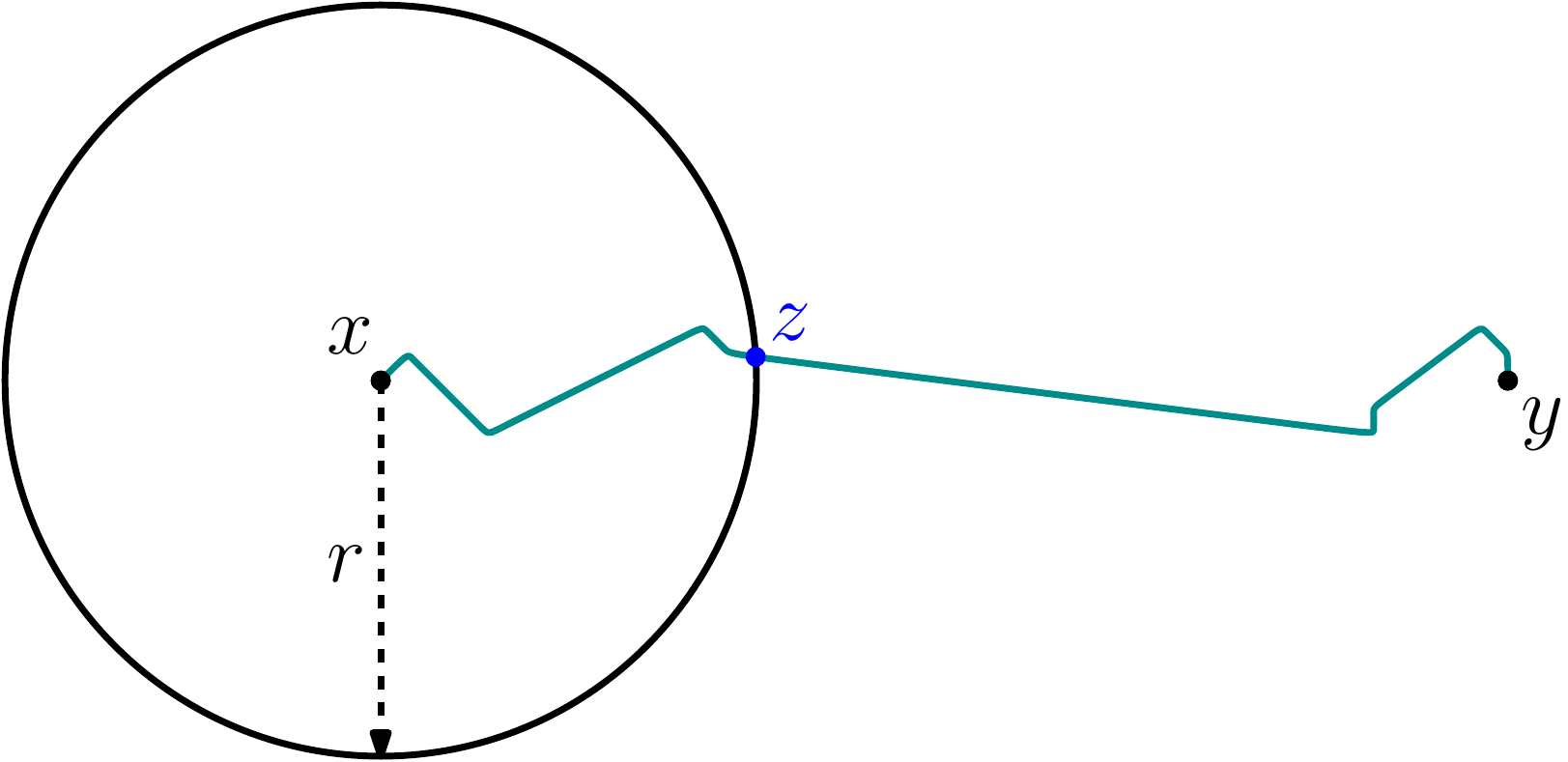}
\caption{Sketch of a $\Pi$-path connecting points $x\neq y\in\mathbb{R}^d$. For $0<r\leq|x-y|$, let $z\in\partial B(x,r)$ be the first exit point of the $\Pi$-path from the ball $\overline{B}(x,r)$: between $x$ and $z$, the $\Pi$-path travels an Euclidean distance $r$ inside the ball $\overline{B}(x,r)$, at speed at most $V^x_r$. Thus, we have $T(x,y)\geq r/V^x_r$.}\label{figkahn}
\end{figure}
(Note that the positivity of $T$ also follows from a more delicate result of Kendall we mention below, which is the existence of $\Pi$-geodesics.)

\begin{rem}
Equivalently, the metric $T$ can be seen as a first-passage percolation metric.
With the random field ${W:x\in\mathbb{R}^d\mapsto V(x)^{-1}\in[0,\infty]}$ we have, for all $x,y\in\mathbb{R}^d$:
\[T(x,y)=\inf\left\{\int_0^1W(\varsigma(s))\left|\dot\varsigma(s)\right|\mathrm{d}s,\quad\text{$\varsigma:[0,1]\rightarrow\mathbb{R}^d$ regular path connecting $x$ to $y$}\right\}.\]
Indeed, given a $\Pi$-path $\xi:[0,T]\rightarrow\mathbb{R}^d$ connecting $x$ to $y$, the reparametrised regular path ${\varsigma:s\in[0,1]\mapsto\xi(sT)}$ satisfies $\int_0^1W(\varsigma(s))\left|\dot\varsigma(s)\right|\mathrm{d}s\leq T$.
In the other direction, let ${\varsigma:[0,1]\rightarrow\mathbb{R}^d}$ be a regular path connecting $x$ to $y$, such that $\int_0^1W(\varsigma(s))\left|\dot\varsigma(s)\right|\mathrm{d}s<\infty$. 
Calling this quantity $T$, consider
\[\psi:t\in[0,T]\mapsto\inf\left\{u\in[0,1]:\int_0^uW(\varsigma(s))\left|\dot\varsigma(s)\right|\mathrm{d}s\geq t\right\}.\]
The reparametrisation ${\xi:t\in[0,T]\mapsto\varsigma(\psi(t))}$ is a $\Pi$-path that connects $x$ to $y$ in time $T$.
\end{rem}

\subparagraph{$\Pi$-geodesics.}
\emph{$\Pi$-geodesics} are defined as $\Pi$-paths travelling in optimal time: the $\Pi$-path $\xi:[0,T]\rightarrow\mathbb{R}^d$ is a $\Pi$-geodesic when no $\Pi$-path $\varsigma:[0,S]\rightarrow\mathbb{R}^d$ connecting $\xi(0)$ to $\xi(T)$ satisfies $S<T$.
Kendall's \cite[Corollary 3.5]{kendall} shows that the infimum in \eqref{eqoptconnt} is in fact a minimum: for every $x,y\in\mathbb{R}^d$, there exists a $\Pi$-geodesic connecting $x$ to $y$ in optimal time $T(x,y)$.

\paragraph{$\varepsilon$-$\Pi$-paths.}

A natural idea to make sense of $\Pi$-paths is to approximate them by simple sequential paths that nearly respect the speed limits.
Following Kendall, for $\varepsilon>0$ we call \emph{$\varepsilon$-$\Pi$-path} a continuous path $\xi:[0,T]\rightarrow\mathbb{R}^d$ for which there exists a subdivision $0\leq r_1<s_1\leq\ldots\leq r_k<s_k\leq T$ of the interval $[0,T]$, and a collection $(\ell_1,v_1);\ldots;(\ell_k,v_k)$ of roads of $\Pi$ such that:
\begin{itemize}
\item for each $i\in\llbracket1,k\rrbracket$, the points $\xi(r_i)$ and $\xi(s_i)$ lie on $\ell_i$, and are connected linearly by $\xi$ at speed $v_i$ over the time interval $[r_i,s_i]$,
\item for each $i\in\llbracket0,k\rrbracket$, with the conventions $s_0=0$ and $r_{k+1}=T$, the points $\xi(s_i)$ and $\xi(r_{i+1})$ are connected linearly by $\xi$ at speed $\varepsilon$ over the time interval $[s_i,r_{i+1}]$.
\end{itemize}
This definition is illustrated in Figure \ref{figepsilonpipath}.
\begin{figure}[ht]
\centering
\includegraphics[width=0.5\linewidth]{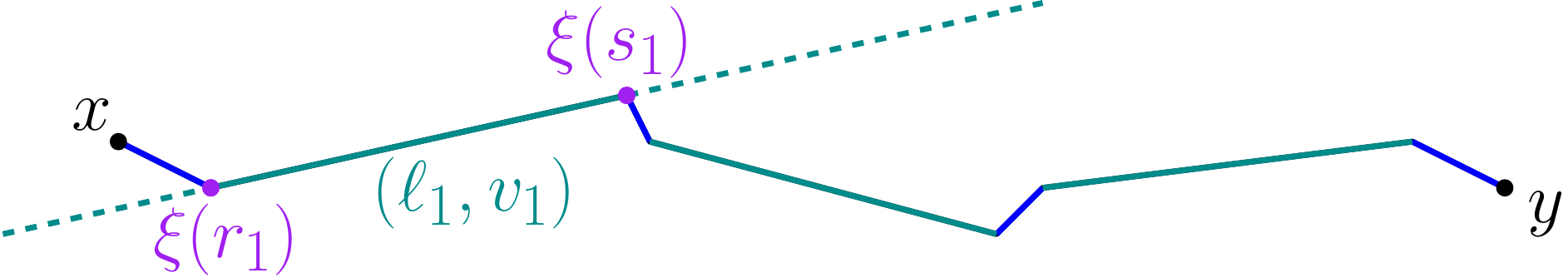}
\caption{Sketch of an $\varepsilon$-$\Pi$-path connecting points $x$ and $y$ in $\mathbb{R}^d$. The big light (cyan) segments are travelled on a road of $\Pi$ at maximal speed, and the smaller dark (blue) segments are travelled between roads at speed $\varepsilon$.}\label{figepsilonpipath}
\end{figure}
In particular, any $\varepsilon$-$\Pi$-path $\xi:[0,T]\rightarrow\mathbb{R}^d$ is a regular path such that
\[\left|\dot\xi(t)\right|\leq\varepsilon\vee V(\xi(t))\quad\text{for almost every $t\in[0,T]$.}\]
With this cutoff, setting
\[T_\varepsilon(x,y)=\inf_{\substack{k\geq0\\(\ell_1,v_1);\ldots;(\ell_k,v_k)\in\Pi\\a_1,b_1\in\ell_1;\ldots;a_k,b_k\in\ell_k}}\left\{\frac{|x-a_1|}{\varepsilon}+\frac{|a_1-b_1|}{v_1}+\ldots+\frac{|a_k-b_k|}{v_k}+\frac{|b_k-y|}{\varepsilon}\right\}\quad\text{for all $x,y\in\mathbb{R}^d$}\]
defines a metric $T_\varepsilon$ on $\mathbb{R}^d$.
Given that for each $x,y\in\mathbb{R}^d$, the quantity $T_\varepsilon(x,y)$ is nondecreasing as $\varepsilon$ decreases, one defines an extended metric ${\overline{T}:\mathbb{R}^d\times\mathbb{R}^d\rightarrow[0,\infty]}$, by setting:
\[\overline{T}(x,y)=\lim_{\varepsilon\downarrow0}T_\varepsilon(x,y)\quad\text{for all $x,y\in\mathbb{R}^d$.}\]
Kendall's results \cite[Theorem 3.8]{kendall} and \cite[Theorem 3.11]{kendall} show that in fact this definition coincides with that of $T$, i.e, for every $x,y\in\mathbb{R}^d$, we have
\begin{equation}\label{eqapproxT}
T_\varepsilon(x,y)\uparrow T(x,y)\quad\text{as $\varepsilon\downarrow0$}.
\end{equation}

\subparagraph{Measurability.} A byproduct of this approximation argument is the measurability of $T$ as a function of $\omega;x,y$, which justifies every application of Fubini's theorem in this paper.
For each $\varepsilon>0$, it is possible to write
\begin{equation}\label{eqmeasurabilityTe}
T_\varepsilon(x,y)=\tau_\varepsilon(\Pi;x,y)\quad\text{for all $x,y\in\mathbb{R}^d$,}
\end{equation}
for some measurable function $\tau_\varepsilon$, and it follows that\begin{equation}\label{eqmeasurabilityT}
T(x,y)=\tau(\Pi;x,y)\quad\text{for all $x,y\in\mathbb{R}^d$,}
\end{equation}
for some measurable function $\tau$.

\subsection{Fundamental properties of $T$}

In this section, we present some fundamental properties of the metric $T$; namely, its invariance and scaling property, its mixing, and the probabilistic estimate of Kahn.
We do not claim any new results here.
In particular, the estimates below show that $\left(\mathbb{R}^d,T\right)$ is almost surely homeomorphic to the usual Euclidean $\mathbb{R}^d$, which was already noted by Kendall (see \cite[Section 7]{kendall}) and Kahn (see \cite[Remark 5.1]{kahn}).

\paragraph{Invariance and scaling.}
The invariance and scaling properties of the process's intensity measure naturally extend to the random metric $T$.
Note that $T$ may be viewed as a random variable with values in the space $\mathbf{F}$ of functions from $\mathbb{R}^d\times\mathbb{R}^d$ to $\mathbb{R}$, endowed as usual with the product $\sigma$-algebra.

\begin{prop}\label{propscaling}
For every $x\in\mathbb{R}^d$, $r>0$ and $h\in\mathbf{SO}\left(\mathbb{R}^d\right)$, we have the equality in distribution
\[(T(x+r\cdot h(y),x+r\cdot h(z)))_{y,z\in\mathbb{R}^d}\overset{\text{\tiny law}}{=}\left(r^\frac{\gamma-d}{\gamma-1}\cdot T(y,z)\right)_{y,z\in\mathbb{R}^d}.\]
\end{prop}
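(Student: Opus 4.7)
The plan is to exhibit a measure-preserving transformation of the road process $\Pi$ that reproduces the geometric scaling-translation-rotation on $\mathbb{R}^d$, and then to transfer it to $T$ via the reparametrisation of $\Pi$-paths. Fix $x\in\mathbb{R}^d$, $r>0$ and $h\in\mathbf{SO}(\mathbb{R}^d)$, and let $\rho=r^{(d-1)/(\gamma-1)}$. Consider the map
\[\begin{matrix}\Psi:&\mathbb{L}_d\times\mathbb{R}_+^*&\longrightarrow&\mathbb{L}_d\times\mathbb{R}_+^*\\ &(\ell,v)&\longmapsto&(x+r\cdot h(\ell),\rho v).\end{matrix}\]
By the scaling/invariance property $f_*\mu_d=r^{-(d-1)}\cdot\mu_d$ recalled in Section \ref{secrappels} together with the change of variable $u=\rho v$, the pushforward of $\mu_d\otimes v^{-\gamma}\mathrm{d}v$ by $\Psi$ equals $r^{-(d-1)}\rho^{\gamma-1}\cdot\mu_d\otimes v^{-\gamma}\mathrm{d}v$, and the exponent $\rho=r^{(d-1)/(\gamma-1)}$ is chosen precisely so that $r^{-(d-1)}\rho^{\gamma-1}=1$. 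Hence $\Psi_*(c\cdot\mu_d\otimes v^{-\gamma}\mathrm{d}v)=c\cdot\mu_d\otimes v^{-\gamma}\mathrm{d}v$, and the mapping theorem for Poisson processes yields $\Psi_*\Pi\overset{\text{\tiny law}}{=}\Pi$.

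Next, I compare the metric $T^{\Psi\Pi}$ built from $\Psi\Pi$ to $T^\Pi=T$, using the $\varepsilon$-$\Pi$-path approximation \eqref{eqapproxT}. Given any $\varepsilon$-$\Pi$-path $\xi$ connecting $y$ to $z$ with collection of roads $(\ell_i,v_i)$ and breakpoints $a_i,b_i\in\ell_i$, one gets an $(\rho\varepsilon)$-$\Psi\Pi$-path connecting $x+r\cdot h(y)$ to $x+r\cdot h(z)$ by using the roads $(x+r\cdot h(\ell_i),\rho v_i)=\Psi(\ell_i,v_i)$ and the image breakpoints $x+r\cdot h(a_i),x+r\cdot h(b_i)$; each segment of Euclidean length $L$ at speed $s$ becomes a segment of length $rL$ at speed $\rho s$, so every travel time is multiplied by $r/\rho$. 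Unpacking the infimum in the definition of $T_\varepsilon$ and noting that the construction is reversible via $\Psi^{-1}$, one obtains the exact identity
\[T^{\Psi\Pi}_{\rho\varepsilon}(x+r\cdot h(y),x+r\cdot h(z))=\frac{r}{\rho}\cdot T^{\Pi}_\varepsilon(y,z)\quad\text{for all $y,z\in\mathbb{R}^d$.}\]
Letting $\varepsilon\downarrow0$, the monotone convergence \eqref{eqapproxT} gives $T^{\Psi\Pi}(x+r\cdot h(y),x+r\cdot h(z))=(r/\rho)\cdot T^\Pi(y,z)$, and $r/\rho=r^{(\gamma-d)/(\gamma-1)}$.

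Finally, I conclude using the measurability statement \eqref{eqmeasurabilityT}. The previous step shows the deterministic identity
\[\tau(\Psi\Pi;x+r\cdot h(y),x+r\cdot h(z))=r^{(\gamma-d)/(\gamma-1)}\cdot\tau(\Pi;y,z)\quad\text{for all $y,z\in\mathbb{R}^d$,}\]
valid on the almost sure event where both metrics are defined. Applying the equality in law $\Psi\Pi\overset{\text{\tiny law}}{=}\Pi$ to the measurable functional $\Pi\mapsto(\tau(\Pi;x+r\cdot h(y),x+r\cdot h(z)))_{y,z\in\mathbb{R}^d}$ yields the announced equality in distribution on $\mathbf{F}$. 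The main (minor) nuisance is the bookkeeping of \eqref{eqmeasurabilityTe} and \eqref{eqmeasurabilityT} to ensure the joint (not merely finite-dimensional) equality in the function space $\mathbf{F}$; everything else is a direct consequence of the scaling of $\mu_d\otimes v^{-\gamma}\mathrm{d}v$ and of the one-parameter rescaling of $\Pi$-paths.
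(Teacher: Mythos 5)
Your proof is correct and follows essentially the same approach as the paper's: push $\Pi$ forward by the scaled rotation-translation on lines together with the compensating dilation $v\mapsto r^{(d-1)/(\gamma-1)}v$ of speeds, observe that this preserves the intensity measure hence the law of $\Pi$, and then combine the deterministic scaling of travel times with the measurability representation $T=\tau(\Pi;\cdot,\cdot)$ to transfer the equality to the metric. The only difference is that you spell out the deterministic rescaling identity at the level of $\varepsilon$-$\Pi$-paths via \eqref{eqapproxT}, whereas the paper simply asserts it; this is a harmless and welcome elaboration rather than a new route.
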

\begin{proof}
Since the intensity measure of $\Pi$ is left invariant by the continuous map
\[\begin{matrix}
f:&\mathbb{L}_d\times\mathbb{R}_+^*&\longrightarrow&\mathbb{L}_d\times\mathbb{R}_+^*\\
&(\ell,v)&\longmapsto&\left(x+r\cdot h(\ell),r^\frac{d-1}{\gamma-1}\cdot v\right),
\end{matrix}\]
the Poisson random measure $\Pi'=f_*\Pi$ has the same distribution as $\Pi$.
Therefore, denoting by ${T'(\cdot,\cdot)=\tau(\Pi';\cdot,\cdot)}$ the random metric induced by the optimal connection time using $\Pi'$-paths, where $\tau$ is the measurable function of \eqref{eqmeasurabilityT}, we have on the one hand the equality in distribution
\[\left(T'(x+r\cdot h(y),x+r\cdot h(z))\right)_{y,z\in\mathbb{R}^d}\overset{\text{\tiny law}}{=}(T(x+r\cdot h(y),x+r\cdot h(z)))_{y,z\in\mathbb{R}^d}.\]
On the other hand we have, deterministically:
\[\tau\left(\Pi';x+r\cdot h(y),x+r\cdot h(z)\right)=\frac{r}{r^{(d-1)/(\gamma-1)}}\cdot\tau(\Pi;y,z)=r^\frac{\gamma-d}{\gamma-1}\cdot\tau(\Pi;y,z)\quad\text{for all $y,z\in\mathbb{R}^d$.}\]
This concludes the proof.
\end{proof}

\paragraph{Mixing.}
In the course of the proof above, we have seen that the process $\Pi$ is stationary with respect to the translations
\begin{equation}\label{eqdeff_x}
\left(\begin{matrix}
f_x:&\mathbb{L}_d\times\mathbb{R}_+^*&\longrightarrow&\mathbb{L}_d\times\mathbb{R}_+^*\\
&(\ell,v)&\longmapsto&(x+\ell,v)
\end{matrix}\right)_{x\in\mathbb{R}^d},
\end{equation}
in the sense that for every $x\in\mathbb{R}^d$, we have $f_x^*\Pi\overset{\text{\tiny law}}{=}\Pi$.
Furthermore, it is mixing, as shown in the following proposition.
Let us denote by $\mathbb{M}$ the space of atomic measures on $\mathbb{L}_d\times\mathbb{R}_+^*$.
The Poisson random measure $\Pi$ is a random variable with values in $\mathbb{M}$.

\begin{prop}\label{propmixing}
For every bounded measurable functions $\varphi$ and $\psi$ from $\mathbb{M}$ to $\mathbb{R}$, we have
\[\mathbb{E}[\varphi(\Pi)\cdot\psi(f_x^*\Pi)]\underset{|x|\to\infty}{\longrightarrow}\mathbb{E}[\varphi(\Pi)]\cdot\mathbb{E}[\psi(\Pi)],\]
where $f_x$ has been defined in \eqref{eqdeff_x}.
\end{prop}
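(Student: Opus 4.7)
The plan is to exploit the independence properties of the Poisson random measure $\Pi$, reducing first to functionals depending on $\Pi$ in a ``bounded window'' and then using Lemma \ref{lemdroitesboules} to show that the windows relevant to $\Pi$ and to $f_x^*\Pi$ become essentially disjoint as $|x|\to\infty$.

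First, a standard approximation argument (e.g.\ conditional expectations with respect to an increasing sequence of sub-$\sigma$-algebras, then $L^1$ martingale convergence) shows that it is enough to establish the convergence when $\varphi$ and $\psi$ depend respectively only on the restrictions $\Pi|_U$ and $\Pi|_V$ of $\Pi$ to sets of the form $U=[K]\times[v_0,\infty[$ and $V=[L]\times[w_0,\infty[$, for some compact $K,L\subset\mathbb{R}^d$ and some $v_0,w_0>0$. Indeed the cylindrical events attached to such sets (which all have finite $\nu$-mass, where $\nu=c\cdot\mu_d\otimes v^{-\gamma}\mathrm{d}v$) generate the $\sigma$-algebra on $\mathbb{M}$, so any bounded measurable $\varphi,\psi$ may be approximated in $L^1(\mathbb{P})$ by such local functionals with uniformly controlled sup-norms.

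Next, for such local $\varphi,\psi$, observe that $\psi(f_x^*\Pi)$ depends only on those atoms $(\ell,v)$ of $\Pi$ for which $(x+\ell)\cap L\neq\emptyset$ and $v\geq w_0$, hence is a functional of $\Pi|_{V_x}$ with $V_x=[L-x]\times[w_0,\infty[$. Set $W_x=U\cap V_x=[K\,;\,L-x]\times[\max(v_0,w_0),\infty[$ and decompose $U=U'_x\sqcup W_x$ and $V_x=V''_x\sqcup W_x$ into pairwise disjoint pieces. Enclosing $K\cup L$ in a common ball $\overline{B}(0,R)$ (so that $L-x\subset\overline{B}(-x,R)$), the upper bound in Lemma \ref{lemdroitesboules} yields, for $|x|\geq R$,
\[
\mu_d\bigl[K\,;\,L-x\bigr]\;\leq\;\mu_d\bigl[\overline{B}(0,R)\,;\,\overline{B}(-x,R)\bigr]\;\leq\;C\cdot\frac{R^{2(d-1)}}{|x|^{d-1}}\;\xrightarrow[|x|\to\infty]{}\;0,
\]
whence $\nu(W_x)\to 0$ and in particular $\mathbb{P}(\Pi(W_x)>0)\to 0$.

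Finally, by the fundamental independence property of Poisson random measures on pairwise disjoint Borel sets, the three restrictions $\Pi|_{U'_x}$, $\Pi|_{V''_x}$ and $\Pi|_{W_x}$ are mutually independent. On the event $\{\Pi(W_x)=0\}$ one has $\Pi|_U=\Pi|_{U'_x}$ and $\Pi|_{V_x}=\Pi|_{V''_x}$, so that $\varphi(\Pi)\psi(f_x^*\Pi)=\varphi(\Pi|_{U'_x})\psi(f_x^*\Pi|_{V''_x})$. Splitting the expectation according to whether $\Pi(W_x)$ vanishes, the complementary event contributes at most $\|\varphi\|_\infty\|\psi\|_\infty\cdot\mathbb{P}(\Pi(W_x)>0)\to0$, while on $\{\Pi(W_x)=0\}$ the expectation factorises into $\mathbb{E}[\varphi(\Pi|_{U'_x})]\cdot\mathbb{E}[\psi(f_x^*\Pi|_{V''_x})]\cdot\mathbb{P}(\Pi(W_x)=0)$. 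By dominated convergence (using $\nu(W_x)\to 0$) together with the stationarity $f_x^*\Pi\overset{\text{\tiny law}}{=}\Pi$, these three factors tend respectively to $\mathbb{E}[\varphi(\Pi)]$, $\mathbb{E}[\psi(\Pi)]$ and $1$, which yields the claim. The main technical point is the initial reduction to local functionals; once this is in place, everything boils down to disjointness of supports combined with the polynomial decay supplied by Lemma \ref{lemdroitesboules}.
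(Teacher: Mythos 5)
Your proposal is correct and follows essentially the same strategy as the paper's proof: approximate $\varphi,\psi$ in $L^1$ by functionals that depend only on the restriction of $\Pi$ to a bounded window in $\mathbb{L}_d\times\mathbb{R}_+^*$, then use the independence of Poisson restrictions to disjoint Borel sets together with the decay of $\mu_d\bigl[\overline{B}(0,R)\,;\,\overline{B}(-x,R)\bigr]$ from Lemma \ref{lemdroitesboules} to show the ``overlap'' region $W_x$ becomes negligible. The paper's event $G_x$ (``no road of $\Pi_{1/n}$ hits both balls'') is precisely your $\{\Pi(W_x)=0\}$, and the paper's restrictions $\Pi_{0\setminus -x}$, $\Pi_{-x\setminus 0}$ are your $\Pi|_{U'_x}$, $\Pi|_{V''_x}$; the only cosmetic difference is that the paper uses a single symmetric window indexed by $n$ while you allow separate compacta $K,L$ and thresholds $v_0,w_0$.

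Footnote on wording: ``dominated convergence'' is not quite the right description for why $\mathbb{E}[\varphi(\Pi|_{U'_x})]\to\mathbb{E}[\varphi(\Pi)]$; the clean argument is the direct $L^1$ bound $|\mathbb{E}[\varphi(\Pi|_{U'_x})]-\mathbb{E}[\varphi(\Pi|_{U})]|\leq 2\|\varphi\|_\infty\,\mathbb{P}(\Pi(W_x)>0)$, since the two random variables coincide on $\{\Pi(W_x)=0\}$. Same remark for the $\psi$-factor. This is a phrasing issue, not a gap.
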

We are indebted to an anonymous referee for pointing out that the above result holds in more generality than as we were stating in an earlier version of this paper, and for providing helpful references.
\begin{proof}
The proof uses a classical approximation argument from measure theory, similarly as in \cite[proof of Theorem 3.5]{kendall2}.
This proof strategy also appears elsewhere in the literature, see for instance \cite[proof of Proposition 2.3.7]{gaf}.

Let $\varphi$ and $\psi$ be bounded measurable functions from $\mathbb{M}$ to $\mathbb{R}$.
For $n\in\mathbb{N}^*$, let us denote by $\rho_n:\mathbb{M}\rightarrow\mathbb{M}$ the restriction map to the Borel subset ${\left[\overline{B}(0,n)\right]\times[1/n,\infty[}\subset\mathbb{L}_d\times\mathbb{R}_+^*$, and by ${\mathcal{A}_n=\sigma(\rho_n(\Pi))}$ the $\sigma$-algebra generated by $\rho_n(\Pi)$.
Fix $\varepsilon>0$.
As $\sigma(\Pi)$ coincides with the $\sigma$-algebra generated by $\bigcup_{n\geq1}\mathcal{A}_n$, by a standard measure theoretic argument (see, e.g, \cite[Lemma 3.16]{kallenberg}), there exists $n\in\mathbb{N}^*$, and bounded measurable functions $\varphi_n$ and $\psi_n$ from $\mathbb{M}$ to $\mathbb{R}$, such that
\begin{equation}\label{eqproofmixing1}
\mathbb{E}[|\varphi(\Pi)-\varphi_n(\rho_n(\Pi))|]\leq\varepsilon\quad\text{and}\quad\mathbb{E}[|\psi(\Pi)-\psi_n(\rho_n(\Pi))|]\leq\varepsilon.
\end{equation}
Denoting by $C>0$ a constant which dominates $\varphi$, $\psi$, $\varphi_n$ and $\psi_n$, we have on the one hand:
\begin{equation}\label{eqproofmixing2}
|\mathbb{E}[\varphi(\Pi)\cdot\psi(f_x^*\Pi)]-\mathbb{E}[\varphi_n(\rho_n(\Pi))\cdot\psi_n(\rho_n(f_x^*\Pi))]|\leq2C\varepsilon.
\end{equation}
On the other hand, the random measures $\rho_n(\Pi)$ and $\rho_n(f_x^*\Pi)$ are ``asymptotically independent''.
Indeed, denote by $G_x$ the event: ``no road of $\Pi_{1/n}$ passes through both balls $\overline{B}(0,n)$ and $\overline{B}(-x,n)$''.
The following holds:
\begin{itemize}
\item we have $\mathbb{P}(G_x)\rightarrow1$ as $|x|\to\infty$ \big(indeed, the integer $n$ is fixed, and we have $\mu_d\left[\overline{B}(0,n)~;~\overline{B}(-x,n)\right]\rightarrow0$ as $|x|\to\infty$, for instance by Lemma \ref{lemdroitesboules}\big),
\item on $G_x$, the random measure $\rho_n(\Pi)$ coincides with the restriction $\Pi_{0\setminus{-x}}$ of the Poisson random measure $\Pi$ to the Borel subset ${\left(\left[\overline{B}(0,n)\right]\middle\backslash\left[\overline{B}(-x,n)\right]\right)\times[1/n,\infty[}$, and $\rho_n(f_x^*\Pi)$ coincides with the restriction $\Pi_{{-x}\setminus0}$ of $\Pi$ to ${\left(\left[\overline{B}(-x,n)\right]\middle\backslash\left[\overline{B}(0,n)\right]\right)\times[1/n,\infty[}$,
\item the random measures $\Pi_{0\setminus{-x}}$ and $\Pi_{{-x}\setminus0}$ are independent.
\end{itemize}
With these three ingredients, standard manipulations show that
\[\mathbb{E}[\varphi_n(\rho_n(\Pi))\cdot\psi_n(\rho_n(f_x^*\Pi))]\underset{|x|\to\infty}{\longrightarrow}\mathbb{E}[\varphi_n(\rho_n(\Pi))]\cdot\mathbb{E}[\psi_n(\rho_n(\Pi))].\]
Then, equations \eqref{eqproofmixing1} and \eqref{eqproofmixing2} allow to conclude: as $|x|\to\infty$, we have
\[\mathbb{E}[\varphi(\Pi)\cdot\psi(f_x^*\Pi)]\longrightarrow\mathbb{E}[\varphi(\Pi)]\cdot\mathbb{E}[\psi(\Pi)].\]
\end{proof}

\paragraph{Probabilistic estimates.}
Finally, we present some fundamental probabilistic estimates on the metric $T$.

\subparagraph{Kahn's estimate.}
First, let us state Kahn's estimate on the $T$-diameter of Euclidean balls.
For $x\in\mathbb{R}^d$ and $r>0$, consider the random variable\footnote{As a consequence of Proposition \ref{propunifcontrolT} below, almost surely the function $T:\mathbb{R}^d\times\mathbb{R}^d\rightarrow\mathbb{R}_+$ is continuous, thus the supremum can be reduced to points $y,z$ having rational coordinates.}
\[T^*_{x,r}=\sup_{y,z\in\overline{B}(x,r)}T(y,z).\]
By invariance and scaling (Proposition \ref{propscaling}), we have the equality in distribution $T^*_{x,r}\overset{\text{\tiny law}}{=}T^*_{0,1}\cdot r^{(\gamma-d)/(\gamma-1)}$.
Kahn's \cite[Theorem 3.1]{kahn} shows the existence of constants $J,\kappa>0$ such that
\begin{equation}\label{eqkahn}
\mathbb{P}\left(T^*_{0,1}>t\right)\leq J\exp\left[-\kappa t^{\gamma-1}\right]\quad\text{for all $t\in\mathbb{R}_+^*$}.
\end{equation}

\subparagraph{A uniform control.}
Intuitively, for given $x,y\in\mathbb{R}^d$, the random variable $T(x,y)$ is of order $|x-y|^{(\gamma-d)/(\gamma-1)}$, since by invariance and scaling, we have the equality in distribution ${T(x,y)\overset{\text{\tiny law}}{=}|x-y|^{(\gamma-d)/(\gamma-1)}\cdot T(0,e_d)}$.
Slightly adapting Kahn's proof of \cite[Theorem 3.1]{kahn}, one can obtain the following estimate.

\begin{prop}\label{propunifcontrolT}
Almost surely, for every $R>0$, there exists a constant $C=C(\omega,R)>0$ such that
\begin{equation}\label{equnifcontrolT}
T(x,y)\leq C\cdot|x-y|^\frac{\gamma-d}{\gamma-1}\cdot\ln\left(\frac{4R}{|x-y|}\right)^\frac{1}{\gamma-1}\quad\text{for all $x\neq y\in\overline{B}(0,R)$}.
\end{equation}
In particular, the map $\mathrm{id}:\left(\overline{B}(0,R),|\cdot|\right)\rightarrow\left(\overline{B}(0,R),T\right)$ is $\beta$-Hölder, for every $\beta\in{]0,(\gamma-d)/(\gamma-1)[}$.
\end{prop}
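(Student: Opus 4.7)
The plan is to combine Kahn's tail estimate \eqref{eqkahn} with the scaling property of Proposition \ref{propscaling} via a dyadic covering / Borel--Cantelli argument.

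First, I would observe that Proposition \ref{propscaling} applied to balls implies the equality in distribution $T^*_{x,r}\overset{\text{\tiny law}}{=}r^{(\gamma-d)/(\gamma-1)}\cdot T^*_{0,1}$, so Kahn's estimate \eqref{eqkahn} gives, uniformly in $x\in\mathbb{R}^d$ and $r>0$,
\[\mathbb{P}\!\left(T^*_{x,r}>t\cdot r^{\frac{\gamma-d}{\gamma-1}}\right)\leq J\exp\!\left[-\kappa t^{\gamma-1}\right]\quad\text{for all }t>0.\]
Fix $R>0$. For each integer $n\geq1$, set $r_n=R\cdot 2^{-n}$ and let $\mathcal{F}_n$ be a covering of $\overline{B}(0,R)$ by closed balls of radius $2r_n$ whose centers lie on a cubic lattice of mesh $r_n$, so that $\#\mathcal{F}_n\leq C_d\,2^{nd}$ for a constant $C_d$ depending only on the dimension, and any two points of $\overline{B}(0,R)$ at Euclidean distance at most $r_n$ lie in a common ball of $\mathcal{F}_n$.

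Second, I would apply the union bound with $t=t_n:=A\cdot n^{1/(\gamma-1)}$ for a constant $A>0$ to be chosen:
\[\mathbb{P}\!\left(\exists B\in\mathcal{F}_n:\ T^*_B>t_n\cdot(2r_n)^{\frac{\gamma-d}{\gamma-1}}\right)\leq C_d\, 2^{nd}\cdot J\exp\!\left[-\kappa A^{\gamma-1}n\right].\]
Choosing $A$ large enough that $\kappa A^{\gamma-1}>d\ln 2$, the right-hand side is summable in $n$, so by Borel--Cantelli there exists almost surely an integer $N=N(\omega,R)$ such that for every $n\geq N$ and every $B\in\mathcal{F}_n$,
\[T^*_B\leq t_n\cdot(2r_n)^{\frac{\gamma-d}{\gamma-1}}.\]

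Third, for $x\neq y\in\overline{B}(0,R)$ with $|x-y|$ small enough that the integer $n$ determined by $r_{n+1}<|x-y|\leq r_n$ satisfies $n\geq N$, the points $x$ and $y$ lie in a common ball $B\in\mathcal{F}_n$, hence
\[T(x,y)\leq T^*_B\leq A\cdot n^{\frac{1}{\gamma-1}}\cdot(2r_n)^{\frac{\gamma-d}{\gamma-1}}\leq C_1\cdot|x-y|^{\frac{\gamma-d}{\gamma-1}}\cdot\ln\!\left(\frac{4R}{|x-y|}\right)^{\frac{1}{\gamma-1}},\]
since $r_n\leq 2|x-y|$ and $n\leq C\cdot\ln(4R/|x-y|)$. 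For the finitely many pairs with $|x-y|$ bounded below, one uses the bound $T(x,y)\leq T^*_{0,R}<\infty$, which holds almost surely and can be absorbed into the constant $C(\omega,R)$ (possibly increasing it). This establishes \eqref{equnifcontrolT}.

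Finally, the Hölder statement follows at once: for any $\beta\in{]0,(\gamma-d)/(\gamma-1)[}$, the function ${r\mapsto r^{\frac{\gamma-d}{\gamma-1}-\beta}\ln(4R/r)^{1/(\gamma-1)}}$ is bounded on ${]0,2R]}$, so \eqref{equnifcontrolT} yields $T(x,y)\leq C'\cdot|x-y|^\beta$ on $\overline{B}(0,R)$. The only real technical point is the careful bookkeeping in step two; the remainder is essentially a rescaling of Kahn's estimate, which I expect to be the main (and already done) input.
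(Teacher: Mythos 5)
Your proof is correct, and it takes a genuinely different route from the paper's. You treat Kahn's tail estimate \eqref{eqkahn} (equivalently, $\mathbb{P}(T^*_{x,r}>t\cdot r^{(\gamma-d)/(\gamma-1)})\leq J\exp[-\kappa t^{\gamma-1}]$ by scaling) as a black box and run a dyadic chaining argument: cover $\overline{B}(0,R)$ at each scale $r_n=R2^{-n}$ by $O(2^{nd})$ balls, pick the threshold $t_n\asymp n^{1/(\gamma-1)}$ so that a union bound makes the bad event summable, then Borel--Cantelli and the usual two-points-in-a-common-ball step. The paper instead \emph{re-derives} a sharpened form of Kahn's bound from scratch: it recalls Kendall's recursive binary-tree construction of $\Pi$-paths between $x$ and $y$, which reduces \eqref{equnifcontrolT} to a uniform \emph{lower} bound on the fastest speed limit $V^{x,y}_{\alpha|x-y|}$ across all pairs and all scales (Lemma \ref{lemunifcontrolV}); that lemma is itself proved by a discretisation-plus-Borel--Cantelli argument on speed limits rather than on $T$-diameters. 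What each buys: your argument is shorter and modular, at the cost of importing Kahn's \cite[Theorem 3.1]{kahn} wholesale; the paper's argument is self-contained, exhibits where the $\ln^{1/(\gamma-1)}$ correction originates (from the uniform control of speed limits), and gives explicit visibility into the structure of the $\Pi$-paths realising the bound.

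One cosmetic detail: with centres on a lattice of mesh $r_n$ and radius $2r_n$, two points at distance $\leq r_n$ are guaranteed to share a ball only when $\sqrt{d}/2+1\leq 2$, i.e. $d\leq 4$. For general $d$, either shrink the mesh to $r_n/\sqrt{d}$ or enlarge the radius to $(1+\sqrt{d})r_n$; this only changes $C_d$ and the summability argument goes through unchanged. Also, to get the ``almost surely, for every $R>0$'' quantifier order in the statement, you should run your construction over $R\in\mathbb{N}^*$ and take a countable intersection; this is routine but worth stating.
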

\begin{proof}
The idea (of Kahn) is to combine Kendall's construction of $\Pi$-paths between pairs of points (\cite[Theorem 3.1]{kendall}), that we recall for convenience, with a uniform lower bound on the speed limits of the roads involved (Lemma \ref{lemunifcontrolV} below).

Fix $R\in\mathbb{N}^*=\{1,2,\ldots\}$, and let $\alpha\in{]0,1[}$ be a small parameter to be adjusted throughout the proof.
Let $(x,y)$ be a pair of points in $\overline{B}(0,R)$: Kendall constructs a $\Pi$-path connecting $x$ to $y$ in the following recursive way.
If $x=y$ then there is nothing to do, otherwise consider the fastest road $(\ell,v)$ of $\Pi$ that passes through both balls $\overline{B}(x,\alpha|x-y|)$ and ${\overline{B}(y,\alpha|x-y|)}$, and denote by $x'$ (resp. $y'$) the orthogonal projection of $x$ (resp. $y$) onto the line $\ell$.
The road $(\ell,v)$ allows to connect $x'$ to $y'$ in time $|x'-y'|/v$, and it remains to connect $x$ to $x'$, and $y'$ to $y$. 
For this, recursively apply the procedure to the pairs of points $(x,x')$ and $(y',y)$.
To encode the construction, let us mark the nodes $\varnothing;1,2;\ldots$ of the infinite (complete) binary tree $\mathbb{T}$ in the following way.
Initially, set $(x_\varnothing,y_\varnothing)=(x,y)$.
Then, for every ${n\in\mathbb{N}=\{0,1,\ldots\}}$ such that the pairs of points $((x_u,y_u);~u\in\mathbb{T}:|u|=n)$ have been constructed, proceed as follows.
For each node $u\in\mathbb{T}$ such that $|u|=n$, consider the fastest road $(\ell_u,v_u)$ of $\Pi$ that passes through both balls $\overline{B}(x_u,\alpha|x_u-y_u|)$ and ${\overline{B}(y_u,\alpha|x_u-y_u|)}$, and denote by $x_u'$ (resp. $y_u'$) the orthogonal projection of $x_u$ (resp. $y_u$) onto the line $\ell_u$.
Recalling the notation introduced above, note that $v_u=V^{x_u,y_u}_{\alpha|x_u-y_u|}$.
Record the time $T_u=|x_u'-y_u'|/v_u$ it takes to connect $x_u'$ to $y_u'$ using the road $(\ell_u,v_u)$, and define the pairs of points associated to the children $u1$ and $u2$ of $u$ in $\mathbb{T}$ as $(x_{u1},y_{u1})=(x_u,x_u')$ and ${(x_{u2},y_{u2})=(y_u',y_u)}$.
From all these ``middle'' connections $x_u'\to y_u'$, we claim that it is indeed possible to obtain a $\Pi$-path connecting $x$ to $y$, which travels in time
\begin{equation}\label{eqintime}
\sum_{u\in\mathbb{T}}T_u=\sum_{u\in\mathbb{T}}\frac{|x_u'-y_u'|}{v_u}\leq\sum_{u\in\mathbb{T}}\frac{|x_u-y_u|}{V^{x_u,y_u}_{\alpha|x_u-y_u|}}.
\end{equation}
Therefore, the above right hand side provides an upper bound on $T(x,y)$.
Now, consider the following lemma.
Note that, assuming $\alpha\leq1/3$, we have $x_u,y_u\in\overline{B}(0,2R)$ for all $u\in\mathbb{T}$.
\begin{lem}\label{lemunifcontrolV}
Almost surely, there exists a constant $c>0$ such that
\begin{equation}\label{equnifcontrolV}
V^{x,y}_{\alpha|x-y|}\geq c\cdot\frac{|x-y|^\frac{d-1}{\gamma-1}}{\ln(4R/(\alpha|x-y|))^\frac{1}{\gamma-1}}\quad\text{for all $x\neq y\in\overline{B}(0,2R)$}.
\end{equation}
\end{lem}
Then, equation \eqref{equnifcontrolT} will be obtained by combining \eqref{eqintime} and \eqref{equnifcontrolV}.
\begin{proof}
The proof relies on a discretisation argument, together with the Borel-Cantelli lemma.

For each $n\in\mathbb{N}$, set $r_n=\alpha^n\cdot4R$, and denote by $\left(\overline{B}(z,r_n/2)\right)_{z\in\mathcal{Z}_n}$ a covering of $\overline{B}(0,2R)$ by balls of radius $r_n/2$, with centres $z\in\overline{B}(0,2R)$ more than $r_n/2$ apart from each other.
In particular, the balls $\overline{B}(z,r_n/4)$ for $z\in\mathcal{Z}_n$ are disjoint and included in $\overline{B}(0,2R+r_n/4)$, and we have ${\#\mathcal{Z}_n\leq(8R/r_n+1)^d=\left(2\alpha^{-n}+1\right)^d}$.
Let $x\neq y\in\overline{B}(0,2R)$, and let $n\in\mathbb{N}^*$ be such that $r_n\leq|x-y|\leq r_{n-1}$.
There exists $x',y'\in\mathcal{Z}_{n+1}$ such that $x\in\overline{B}(x',r_{n+1}/2)$ and ${y\in\overline{B}(y',r_{n+1}/2)}$. 
Since $r_{n+1}=\alpha r_n\leq\alpha|x-y|$, by the triangle inequality we have $V^{x,y}_{\alpha|x-y|}\geq V^{x',y'}_{r_{n+1}/2}$.
Now, let us control the $V^{x',y'}_{r_{n+1}/2}$ uniformly in $x',y'$.
First, note that $x'$ and $y'$ are at distance at most
\[|x'-x|+|x-y|+|y-y'|\leq\frac{r_{n+1}}{2}+r_{n-1}+\frac{r_{n+1}}{2}=\left(\frac{\alpha}{2}+\frac{1}{\alpha}+\frac{\alpha}{2}\right)\cdot r_n=:C\cdot r_n\]
from each other.
Using Lemma \ref{lemdroitesboules}, we find that there exists a constant $c>0$ such that
\[\mathbb{P}\left(V^{x',y'}_{r_{n+1}/2}<r_n^\frac{d-1}{\gamma-1}\cdot\zeta_n^{-1}\right)\leq\exp\left[-c\cdot r_n^{d-1}\cdot\left(r_n^\frac{d-1}{\gamma-1}\cdot\zeta_n^{-1}\right)^{-(\gamma-1)}\right]=\exp\left[-c\zeta_n^{\gamma-1}\right],\]
where the $\zeta_n$ are positive real numbers which will be adjusted later.
Denoting by $A_n$ the event: ``there exists ${x',y'\in\mathcal{Z}_{n+1}}$ with $|x'-y'|\leq Cr_n$ such that ${V^{x',y'}_{r_{n+1}/2}<r_n^{(d-1)/(\gamma-1)}\cdot \zeta_n^{-1}}$'', by a union bound we get
\[\mathbb{P}(A_n)\leq\#\mathcal{Z}_n^2\cdot\exp\left[-c\zeta_n^{\gamma-1}\right]\leq\left(2\alpha^{-n}+1\right)^{2d}\cdot\exp\left[-c\zeta_n^{\gamma-1}\right].\]
Then, setting
\[\zeta_n=\left(\frac{3d\ln(1/\alpha)}{c}\cdot n\right)^\frac{1}{\gamma-1},\quad\text{so that}\quad\exp\left[-c\zeta_n^{\gamma-1}\right]=\alpha^{3dn},\]
we obtain $\sum_{n\geq1}\mathbb{P}(A_n)<\infty$.
By the Borel-Cantelli lemma, almost surely the event $A_n$ fails to be realised for all sufficiently large $n$, hence there exists a constant $c'>0$ such that for every $n\in\mathbb{N}^*$:
\[V^{x',y'}_{r_{n+1}/2}\geq c'\cdot r_n^\frac{d-1}{\gamma-1}\cdot n^{-\frac{1}{\gamma-1}}\quad\text{for all $x',y'\in\mathcal{Z}_{n+1}$ such that $|x'-y'|\leq Cr_n$.}\]
It follows that
\[V^{x,y}_{\alpha|x-y|}\geq c'\cdot(\alpha|x-y|)^\frac{d-1}{\gamma-1}\cdot\log_{1/\alpha}\left(\frac{4R}{\alpha|x-y|}\right)^{-\frac{1}{\gamma-1}}\quad\text{for all $x\neq y\in\overline{B}(0,2R)$,}\]
which proves \eqref{equnifcontrolV}.
\end{proof}
Back to the proof of the proposition, let us plug \eqref{equnifcontrolV} into \eqref{eqintime}: this yields
\[T(x,y)\leq\frac{1}{c}\sum_{u\in\mathbb{T}}|x_u-y_u|^\frac{\gamma-d}{\gamma-1}\cdot\ln\left(\frac{4R}{\alpha|x_u-y_u|}\right)^\frac{1}{\gamma-1}=\frac{1}{c}\sum_{u\in\mathbb{T}}\phi(|x_u-y_u|),\]
with $\phi:s\mapsto s^{(\gamma-d)/(\gamma-1)}\cdot\ln(4R/(\alpha s))^{1/(\gamma-1)}$.
Note that $|x_u-y_u|\leq\alpha^{|u|}\cdot|x-y|\leq2R$ for all $u\in\mathbb{T}$.
Now, a straightforward analysis shows that, assuming $\alpha\leq2e^{-1/(\gamma-d)}$, the function $\phi$ is nondecreasing over the interval $]0,2R]$: using this, we get
\[\begin{split}
T(x,y)&\leq\frac{1}{c}\sum_{u\in\mathbb{T}}\phi\left(\alpha^{|u|}\cdot|x-y|\right)\\
&=\frac{1}{c}\sum_{n\geq0}2^n\cdot\phi\left(\alpha^n\cdot|x-y|\right)=\frac{1}{c}\sum_{n\geq0}\left(2\alpha^\frac{\gamma-d}{\gamma-1}\right)^n\cdot|x-y|^\frac{\gamma-d}{\gamma-1}\cdot\ln\left(\frac{4R}{\alpha^{n+1}\cdot|x-y|}\right)^\frac{1}{\gamma-1}.
\end{split}\]
Finally, we have
\[\ln\frac{4R}{\alpha^{n+1}\cdot|x-y|}=(n+1)\ln\frac{1}{\alpha}+\ln\frac{4R}{|x-y|}\leq\left((n+1)\cdot\frac{\ln(1/\alpha)}{\ln 2}+1\right)\cdot\ln\frac{4R}{|x-y|}=:a_n\cdot\ln\frac{4R}{|x-y|}\]
for all $n\in\mathbb{N}$.
Thus, we obtain
\[T(x,y)\leq\frac{1}{c}\sum_{n\geq0}\left(2\alpha^\frac{\gamma-d}{\gamma-1}\right)^n\cdot a_n^\frac{1}{\gamma-1}\cdot|x-y|^\frac{\gamma-d}{\gamma-1}\cdot\ln\left(\frac{4R}{|x-y|}\right)^\frac{1}{\gamma-1}\quad\text{for all $x\neq y\in\overline{B}(0,R)$.}\]
Assuming $\alpha<2^{-(\gamma-1)/(\gamma-d)}$, we see that $\sum_{n\geq0}\left(2\alpha^\frac{\gamma-d}{\gamma-1}\right)^n\cdot a_n^\frac{1}{\gamma-1}<\infty$, which achieves to prove \eqref{equnifcontrolT}.
The second assertion of the proposition is a straightforward consequence of that equation.
\end{proof}

In particular, almost surely, for every $R>0$, the map ${\mathrm{id}:\left(\overline{B}(0,R),|\cdot|\right)\rightarrow\left(\overline{B}(0,R),T\right)}$ is continuous.
By the compactness of $\overline{B}(0,R)$, this map is in fact a homeomorphism (see, e.g, \cite[Theorem 26.6]{munkres}), and globally so is the map ${\mathrm{id}:\left(\mathbb{R}^d,|\cdot|\right)\rightarrow\left(\mathbb{R}^d,T\right)}$.
Therefore, the metric $T$ may be viewed as a random variable with values in the space $\mathbf{C}$ of continuous functions from $\mathbb{R}^d\times\mathbb{R}^d$ to $\mathbb{R}$, endowed as usual with the topology of uniform convergence on compact subsets.

\section{Hausdorff dimension and quick connection probability}\label{secthm12}

In this section, we compute the Hausdorff dimension of the random metric space $\left(\mathbb{R}^d,T\right)$, and provide sharp estimates on the \emph{quick connection probability}, i.e, the probability $\mathbb{P}(T(0,e_d)\leq t)$ for two points at unit Euclidean distance to be connected by the road network in time $t$, as $t\to0^+$.
Actually, doing the former is quite straightforward once the latter has been established, so we consider it first.

\subsection{Hausdorff dimension of $\left(\mathbb{R}^d,T\right)$}

Theorem \ref{thmhausdim} states that the Hausdorff dimension of $\left(\mathbb{R}^d,T\right)$ is given by ${(\gamma-1)d/(\gamma-d)>d}$: this confirms a conjecture of Kahn, stated in \cite[Section 7]{kahn}.
For a reference on Hausdorff dimension see, e.g, \cite[Chapter 4]{brownianmotion}.
The proof below is conditional on the estimate of Theorem \ref{thmquickco}, which we prove in the next subsection; namely, we assume here that the following holds: there exists a constant $C>0$ such that
\begin{equation}\label{eqbsoptconntime}
\text{$\mathbb{P}(T(0,e_d)\leq t)\leq C\cdot t^\sigma$ for all $t\in[0,1]$,\quad where $\sigma=\frac{(\gamma-1)(d+\gamma-2)}{\gamma-d}$.}
\end{equation}

\begin{proof}[Proof of Theorem \ref{thmhausdim} assuming \eqref{eqbsoptconntime}]
\small\textsc{Upper bound\textbf{.}} 
\normalsize By Proposition \ref{propunifcontrolT}, almost surely, for each $R>0$, the map ${\mathrm{id}:\left(\overline{B}(0,R),|\cdot|\right)\rightarrow\left(\overline{B}(0,R),T\right)}$ is $\beta$-Hölder, for every ${\beta\in{]0,(\gamma-d)/(\gamma-1)[}}$.
Therefore, we have
\[\dim_H\left(\overline{B}(0,R),T\right)\leq\frac{\dim_H\left(\overline{B}(0,R),|\cdot|\right)}{\beta}\quad\text{for every $\beta\in\left]0,\frac{\gamma-d}{\gamma-1}\right[$,}\]
hence $\dim_H\left(\overline{B}(0,R),T\right)\leq(\gamma-1)/(\gamma-d)\cdot d$.
By the countable stability of Hausdorff dimension, we obtain 
\[\dim_H\left(\mathbb{R}^d,T\right)\leq\frac{(\gamma-1)d}{\gamma-d}.\]

\small\textsc{Lower bound\textbf{.}}
\normalsize Let us now turn to the converse inequality.
The energy method (see \cite[Theorem 4.27]{brownianmotion}) states that if the energy integral $\int_{\overline{B}(0,1)}\int_{\overline{B}(0,1)}T(x,y)^{-s}\mathrm{d}y\mathrm{d}x$ is finite, then $\dim_H\left(\overline{B}(0,1),T\right)\geq s$.
Therefore, it suffices to show that for every $s\in[0,(\gamma-1)d/(\gamma-d)[$, we have $\int_{\overline{B}(0,1)}\int_{\overline{B}(0,1)}T(x,y)^{-s}\mathrm{d}y\mathrm{d}x<\infty$ almost surely.
Let then ${s\in[0,(\gamma-1)d/(\gamma-d)[}$: by Fubini's theorem, and invariance and scaling, we have
\[\begin{split}
\mathbb{E}\left[\int_{\overline{B}(0,1)}\int_{\overline{B}(0,1)}\frac{\mathrm{d}y}{T(x,y)^s}\mathrm{d}x\right]&=\int_{\overline{B}(0,1)}\int_{\overline{B}(0,1)}\mathbb{E}\left[T(x,y)^{-s}\right]\mathrm{d}y\mathrm{d}x\\
&=\int_{\overline{B}(0,1)}\int_{\overline{B}(0,1)}\frac{\mathrm{d}y}{|x-y|^{(\gamma-d)s/(\gamma-1)}}\mathrm{d}x\cdot\mathbb{E}\left[T(0,e_d)^{-s}\right].
\end{split}\]
As $(\gamma-d)s/(\gamma-1)<d$, we have $\int_{\overline{B}(0,1)}\int_{\overline{B}(0,1)}|x-y|^{-(\gamma-d)s/(\gamma-1)}\mathrm{d}y\mathrm{d}x<\infty$; and by assumption, the other term
\[\mathbb{E}\left[T(0,e_d)^{-s}\right]=s\int_0^\infty\mathbb{P}\left(T(0,e_d)\leq t\right)\cdot t^{-s-1}\mathrm{d}t\leq s\int_0^1C\cdot t^\sigma\cdot t^{-s-1}\mathrm{d}t+1\]
is finite as well, since $\sigma-s>(\gamma-1)(\gamma-2)/(\gamma-d)>0$.
\end{proof}

To actually complete the above proof, we need to prove Theorem \ref{thmquickco}, which is the subject of the next subsection.

\subsection{Sharp estimates for quick connection probability}

In this subsection, we prove Theorem \ref{thmquickco}.
The lower bound is Proposition \ref{propbi} below, and the more delicate upper bound is Proposition \ref{propbs}.

\paragraph{Lower bound.}

A simple way of connecting $0$ and $e_d$ quickly is to use a fast road passing close to both points.
This basic idea gives the following lower bound.

\begin{prop}\label{propbi}
There exists a constant $c>0$ such that
\[\text{$\mathbb{P}(T(0,e_d)\leq t)\geq c\cdot t^\sigma$ for all $t\in[0,1]$,\quad where $\sigma=\frac{(\gamma-1)(d+\gamma-2)}{\gamma-d}$.}\]
\end{prop}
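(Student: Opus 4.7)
My plan is to force a fast road passing very close to both $0$ and $e_d$, and use short off-road connections to reach it. Scaling suggests the relevant scales: $T$-diameters of Euclidean balls of radius $r$ are of order $r^{(\gamma-d)/(\gamma-1)}$, while travelling at speed $v$ over unit Euclidean distance takes time $1/v$. Balancing these quantities against $t$, I fix a small $\varepsilon>0$ to be tuned and set $r_t:=\varepsilon\,t^{(\gamma-1)/(\gamma-d)}$ and $v_t:=9/t$. Consider the three events
\begin{align*}
A_t&:=\left\{\exists\,(\ell,v)\in\Pi:\,v\geq v_t\text{ and }\ell\in\left[\overline{B}(0,r_t);\overline{B}(e_d,r_t)\right]\right\},\\
B^0_t&:=\left\{T^*_{0,r_t}\leq t/3\right\},\quad B^1_t:=\left\{T^*_{e_d,r_t}\leq t/3\right\}.
\end{align*}
On $A_t$, take $x\in\ell\cap\overline{B}(0,r_t)$ and $y\in\ell\cap\overline{B}(e_d,r_t)$: then $|x-y|\leq1+2r_t\leq 3$ for $t\leq 1$ small, so travelling from $x$ to $y$ along $(\ell,v)$ takes at most $3/v_t=t/3$. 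Combined with the triangle inequality this gives $T(0,e_d)\leq T^*_{0,r_t}+t/3+T^*_{e_d,r_t}\leq t$ on $A_t\cap B^0_t\cap B^1_t$.

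It remains to lower bound $\mathbb{P}(A_t\cap B^0_t\cap B^1_t)$. For $A_t$, the void formula for the Poisson process, combined with Lemma~\ref{lemdroitesboules} and the normalisation \eqref{eqnorm}, yields for $t$ small and using $1-e^{-x}\geq x/2$:
\[
\mathbb{P}(A_t)\geq\tfrac{1}{2}\cdot c\int_{v_t}^\infty v^{-\gamma}\,\mathrm{d}v\cdot\mu_d\!\left[\overline{B}(0,r_t);\overline{B}(e_d,r_t)\right]\geq C_1\,\varepsilon^{2(d-1)}\,t^\sigma,
\]
a direct computation recovering precisely the exponent $\sigma=(\gamma-1)(d+\gamma-2)/(\gamma-d)$. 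For $B^0_t$, Proposition~\ref{propscaling} gives $T^*_{0,r_t}\overset{\text{\tiny law}}{=}\varepsilon^{(\gamma-d)/(\gamma-1)}\,t\cdot T^*_{0,1}$, and hence
\[
\mathbb{P}(B^0_t)=\mathbb{P}\!\left(T^*_{0,1}\leq\left(3\varepsilon^{(\gamma-d)/(\gamma-1)}\right)^{-1}\right),
\]
which can be made at least $1/2$ by choosing $\varepsilon$ small enough, thanks to Kahn's tail bound \eqref{eqkahn}; translation invariance gives the same for $B^1_t$.

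The hard part will be the joint dependence of the three events on $\Pi$, which blocks a direct independence argument. The key observation is that each of $A_t,B^0_t,B^1_t$ is an \emph{increasing} functional of the Poisson configuration: adding a road can only make $A_t$ more likely, and can only shrink any $T^*$-diameter. By the FKG inequality for Poisson point processes, applied twice,
\[
\mathbb{P}(T(0,e_d)\leq t)\geq\mathbb{P}(A_t\cap B^0_t\cap B^1_t)\geq\mathbb{P}(A_t)\cdot\mathbb{P}(B^0_t)\cdot\mathbb{P}(B^1_t)\geq C_2\,t^\sigma,
\]
valid for all $t\in(0,t_0]$, with some threshold $t_0>0$. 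The bound extends to $t\in[t_0,1]$ by monotonicity of $t\mapsto\mathbb{P}(T(0,e_d)\leq t)$, losing only a constant factor. An alternative route would split $\Pi$ into independent fast and slow subprocesses so that $A_t$ lives in the former and surrogate versions of $B^i_t$ (built from the slow-only metric) live in the latter; this avoids invoking FKG but requires reproving a Kahn-type bound on the truncated metric, which is strictly heavier.
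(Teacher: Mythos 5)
Your proposal is correct and follows essentially the same route as the paper: force a fast road through two small balls around $0$ and $e_d$, control the off-road pieces via Kahn's tail bound on $T^*_{0,1}$ after rescaling, and combine the three increasing events by the FKG inequality, with $r_t\asymp t^{(\gamma-1)/(\gamma-d)}$ tuned by a small constant. The only cosmetic difference is that the paper projects $0$ and $e_d$ orthogonally onto the fast line so the on-road segment has length at most $1$ (hence requires $V^{0,e_d}_r\geq 3/t$), whereas you bound the segment length by $3$ and compensate by asking for speed $v_t=9/t$; this is immaterial.
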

\begin{proof}
Let $t\in{]0,1]}$, and let $r=r(t)>0$ be a small parameter to be adjusted later.
We have (see Figure \ref{figproplb})
\[T(0,e_d)\leq T^*_{0,r}+\frac{1}{V^{0,e_d}_r}+T^*_{e_d,r},\]
hence
\[\mathbb{P}(T(0,e_d)\leq t)\geq\mathbb{P}\left(T^*_{0,r}\leq\frac{t}{3}~;~V^{0,e_d}_r\geq\frac{3}{t}~;~T^*_{e_d,r}\leq\frac{t}{3}\right).\]
\begin{figure}[ht]
\centering
\includegraphics[width=0.55\linewidth]{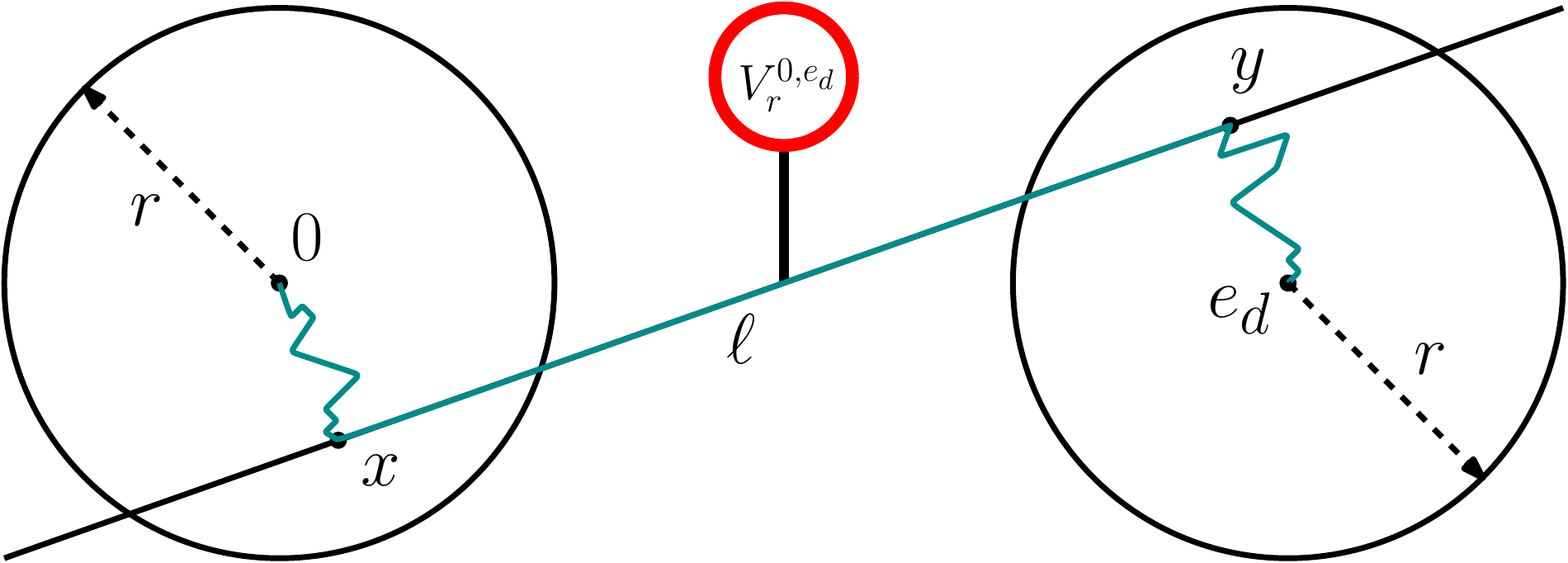}
\caption{(Planted on $\ell$ is a speed limit sign.) Denoting by $(\ell,v)$ the fastest road passing through both balls $\overline{B}(0,r)$ and $\overline{B}(e_d,r)$, with $v=V^{0,e_d}_r$ by definition, let $x$ (resp. $y$) be the orthogonal projection of $0$ (resp. $e_d$) onto $\ell$.
The points $x$ and $y$ can be connected  in time $|x-y|/v\leq1/v$, and connecting points $0$ and $x$ (resp. $y$ and $e_d$) takes time at most $T^*_{0,r}$ \big(resp. $T^*_{e_d,r}$\big).}\label{figproplb}
\end{figure}
Now, the three events in the above intersection are all increasing, in the sense that if they hold for $\Pi$, then they also hold for any atomic measure $\Pi'$ such that $\Pi'(A)\geq\Pi(A)$ for all measurable sets $A\subset\mathbb{L}_d\times\mathbb{R}_+^*$.
Therefore, by the FKG inequality for Poisson processes (see, e.g, \cite[Theorem 20.4]{lastpenrose}), we have
\[\mathbb{P}\left(T^*_{0,r}\leq\frac{t}{3}~;~V^{0,e_d}_r\geq\frac{3}{t}~;~T^*_{e_d,r}\leq\frac{t}{3}\right)\geq\mathbb{P}\left(T^*_{0,r}\leq\frac{t}{3}\right)\cdot\mathbb{P}\left(V^{0,e_d}_r\geq\frac{3}{t}\right)\cdot\mathbb{P}\left(T^*_{e_d,r}\leq\frac{t}{3}\right).\]
Next, by \eqref{eqkahn} together with invariance and scaling, there exists a constant $c>0$ such that
\[\mathbb{P}\left(T^*_{0,r}>\frac{t}{3}\right)\leq J\exp\left[-\frac{ct^{\gamma-1}}{r^{\gamma-d}}\right]\quad\text{and}\quad\mathbb{P}\left(T^*_{e_d,r}>\frac{t}{3}\right)\leq J\exp\left[-\frac{ct^{\gamma-1}}{r^{\gamma-d}}\right]\quad\text{for every $t\in\mathbb{R}_+^*$.}\]
At this point, let us set
\[r(t)=\left(\frac{ct^{\gamma-1}}{\ln(1+2J)}\right)^\frac{1}{\gamma-d},\quad\text{so that}\quad J\exp\left[-\frac{ct^{\gamma-1}}{r(t)^{\gamma-d}}\right]\leq\frac{1}{2}.\]
With that choice, we get
\[\mathbb{P}(T(0,e_d)\leq t)\geq\frac{1}{4}\cdot\mathbb{P}\left(V^{0,e_d}_r\geq\frac{3}{t}\right).\]
Finally, the quantity $\mathbb{P}\left(V^{0,e_d}_r\geq3/t\right)$ is the probability for a Poisson random variable with mean $p$ proportional to $\mu_d\left[\overline{B}(0,r)~;~\overline{B}(e_d,r)\right]\cdot(3/t)^{-(\gamma-1)}$ to be positive.
By virtue of Lemma \ref{lemdroitesboules}, the parameter $p$ is bounded between constants times $r^{2(d-1)}\cdot t^{\gamma-1}$: since
\[r(t)^{2(d-1)}\cdot t^{\gamma-1}=\frac{c}{\ln(1+2J)}\cdot t^\sigma\]
goes to $0$ as $t\to0^+$, the probability $\mathbb{P}\left(V^{0,e_d}_r\geq3/t\right)=1-e^{-p(t)}$ is asymptotic to $p(t)$, and thus we obtain the result of the proposition.
\end{proof}

\paragraph{Upper bound.}

We now turn to upper bounding the quick connection probability.
The idea is that, for $0$ and $e_d$ to be connected quickly, there must exist fast roads at all scales around both points.
More precisely, for every $r\in{]0,1]}$, any $\Pi$-path connecting $0$ and $e_d$ has to travel an Euclidean distance $r$ inside the ball $\overline{B}(0,r)$ \big(resp. $\overline{B}(e_d,r)$\big), at speed at most $V^0_r$ \big(resp. $V^{e_d}_r$\big).
This is illustrated in Figure \ref{figkahn}.
Therefore, on the event $(T(0,e_d)\leq t)$, we must have $V^0_r\geq r/t$ and $V^{e_d}_r\geq r/t$ for all $r\in{]0,1]}$.
As mentioned in Section \ref{secrappels}, the typical order of $V^x_r$ is $r^{(d-1)/(\gamma-1)}$, so --- intuitively --- the condition $V^x_r\geq r/t$ is restrictive only when $r$ is of order at least $t^{(\gamma-1)/(\gamma-d)}$.
Discretising scales, we set $r_k=2^{-k}$ for all $k\in\mathbb{N}$, and let $K$ be the largest integer $k$ such that $r_k\geq t^{(\gamma-1)/(\gamma-d)}$.
According to the previous discussion, we have
\begin{equation}\label{eqproofbs}
\mathbb{P}(T(0,e_d)\leq t)\leq\mathbb{P}\left(\text{$V^0_{r_k}\geq\frac{r_k}{t}$ and $V^{e_d}_{r_k}\geq\frac{r_k}{t}$ for all $k\in\llbracket0,K\rrbracket$}\right).
\end{equation}
In the following subparagraph, we explain how to control the probability of such an event.

\subparagraph{Multiscale lemma.}
In \eqref{eqproofbs}, the random variables $V^0_{r_k}$ and $V^{e_d}_{r_k}$ for $k\in\llbracket0,K\rrbracket$ are correlated; still, in order to control the right hand side, the idea is to exploit some independence.
Such independence is gained by examining the possible configurations of the process leading to the realisation of the event at stake (the proof of Lemma \ref{lemkahn} below should shed light on this sentence).
We derive the following ``multiscale lemma''.
\begin{lem}\label{lemkahn}
Let $(r_k)_{k\in\mathbb{N}}$ and $(v_k)_{k\in\mathbb{N}}$ be decreasing sequences of positive real numbers.
\begin{enumerate}
\item\label{lemkahn1} Let $x\in\mathbb{R}^d$.
For any integers $0\leq n\leq K$, we have
\begin{equation}\label{eqlemkahn1}
\mathbb{P}\left(\text{$V^x_{r_k}\geq v_k$ for all $k\in\llbracket n,K\rrbracket$}\right)\leq\sum_{j=0}^{K-n}\sum_{n=k_0<\ldots<k_{j+1}=K+1}\prod_{i=0}^j\mathbb{P}\left(V^x_{r_{k_{i+1}-1}}\geq v_{k_i}\right).
\end{equation}
\item\label{lemkahn2} Let $x\neq y\in\mathbb{R}^d$. 
For any integers $0\leq m,n\leq K$, we have
\begin{multline}\label{eqlemkahn2}
\mathbb{P}\left(\text{$V^x_{r_k}\geq v_k$ for all $k\in\llbracket m,K\rrbracket$ and $V^y_{r_k}\geq v_k$ for all $k\in\llbracket n,K\rrbracket$}\right)\\
\leq\sum_{j=0}^{2K+1-m-n}\sum_{(m,n)=(k_0,l_0)\prec\ldots\prec(k_{j+1},l_{j+1})=(K+1,K+1)}\prod_{i=0}^j\varpi_{k_i,l_i}^{k_{i+1},l_{i+1}},
\end{multline}
where $(k_{i+1},l_{i+1})\succ(k_i,l_i)$ means $k_{i+1}\geq k_i$, $l_{i+1}\geq l_i$, and $(k_{i+1},l_{i+1})\neq(k_i,l_i)$; and with
\[\varpi_{k_i,l_i}^{k_{i+1},l_{i+1}}=\begin{cases}
\mathbb{P}\left(V^{x,y}_{r_{k_{i+1}-1},r_{l_{i+1}-1}}\geq v_{k_i\wedge l_i}\right)&\text{if $k_{i+1}>k_i$ and $l_{i+1}>l_i$},\\
\mathbb{P}\left(V^x_{r_{k_{i+1}-1}}\geq v_{k_i\wedge l_i}\right)&\text{if $k_{i+1}>k_i$ and $l_{i+1}=l_i$},\\
\mathbb{P}\left(V^y_{r_{l_{i+1}-1}}\geq v_{k_i\wedge l_i}\right)&\text{if $k_{i+1}=k_i$ and $l_{i+1}>l_i$}.
\end{cases}\]
\end{enumerate}
\end{lem}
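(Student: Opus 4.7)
The underlying structural fact is that, for a fixed $x\in\mathbb{R}^d$, every road $(\ell,v)\in\Pi$ witnesses a \emph{contiguous} interval of scales: setting $L(v)=\min\{k:v_k\leq v\}$ and $M_x(\ell)=\max\{k:d(x,\ell)\leq r_k\}$, the road witnesses scale $k$ (meaning $\ell\cap\overline{B}(x,r_k)\neq\emptyset$ and $v\geq v_k$) exactly when $L(v)\leq k\leq M_x(\ell)$. Consequently, on the event under study, the intervals $[L(v),M_x(\ell)]$ ranging over the roads of $\Pi$ cover $\llbracket n,K\rrbracket$. My plan is to perform a greedy interval sub-cover and decompose according to its combinatorial shape; the corresponding regions of $\mathbb{L}_d\times\mathbb{R}_+^*$ will turn out to be pairwise disjoint, so Poisson independence converts the resulting intersection probability into a product, and summing over the shape of the greedy cover yields the stated bound.

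For part (\ref{lemkahn1}), starting from $n$ I would pick at each step a road whose interval contains the current uncovered scale and maximizes its right endpoint, iterating until $K$ is reached. This produces a canonical partition $n=k_0<\ldots<k_{j+1}=K+1$ and a road $R_i=(\ell_i,v_i)$ per block, satisfying $v_i\geq v_{k_i}$, $d(x,\ell_i)\leq r_{k_{i+1}-1}$, and -- by greedy maximality, for $i<j$ -- $d(x,\ell_i)>r_{k_{i+1}}$. Hence $R_i$ belongs to the reserved region
\[A_i^\star=\left\{(\ell,v):r_{k_{i+1}}<d(x,\ell)\leq r_{k_{i+1}-1},~v\geq v_{k_i}\right\}~(i<j),\quad A_j^\star=\left\{(\ell,v):d(x,\ell)\leq r_K,~v\geq v_{k_j}\right\},\]
and the $A_i^\star$ are pairwise disjoint (spatial annuli with different distance ranges at $x$). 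Setting $G_\pi=\{\Pi(A_i^\star)\geq 1~\forall i\}$, we thus have $E\subset\bigcup_\pi G_\pi$, and Poisson independence on disjoint regions gives $\mathbb{P}(G_\pi)=\prod_i\mathbb{P}(\Pi(A_i^\star)\geq 1)\leq\prod_i\mathbb{P}(V^x_{r_{k_{i+1}-1}}\geq v_{k_i})$ because $A_i^\star\subset\{d(x,\ell)\leq r_{k_{i+1}-1},~v\geq v_{k_i}\}$. A union bound over partitions $\pi$ finishes the argument.

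For part (\ref{lemkahn2}), the same strategy lifts to the two-point problem. A road now witnesses scales $[L(v),M_x(\ell)]$ at $x$ and $[L(v),M_y(\ell)]$ at $y$, and greedy-covering $\llbracket m,K\rrbracket\times\llbracket n,K\rrbracket$ becomes a monotone lattice path from $(m,n)$ to $(K+1,K+1)$ with East, North and Northeast steps -- exactly the three cases in the definition of $\varpi_{k_i,l_i}^{k_{i+1},l_{i+1}}$. The greedy rule I would adopt is: at each state $(k_i,l_i)$, select a road of maximum speed that advances the state as far as possible; such a road necessarily has speed $\geq v_{k_i\wedge l_i}$ (the larger of the two current speed thresholds $v_{k_i},v_{l_i}$) and lives in a spatial annulus at $x$, at $y$, or at both, according to the step direction. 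Disjointness of the successive annular/speed regions combined with Poisson independence produces $\prod_i\varpi_{k_i,l_i}^{k_{i+1},l_{i+1}}$, and summing over lattice paths yields the bound.

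The main obstacle is the two-dimensional bookkeeping in part (\ref{lemkahn2}): one must verify that along any lattice path the regions of $\mathbb{L}_d\times\mathbb{R}_+^*$ reserved for the three different step types (East, North, Northeast) are simultaneously pairwise disjoint. The uniform speed threshold $v_{k_i\wedge l_i}$ is exactly what enables this unification; it forces the greedy algorithm to always target the current most demanding scale, even when advancing in only one coordinate, so that the associated speed bands and spatial annuli can be chosen to be mutually disjoint. Carrying this through is essentially careful bookkeeping, but it is the non-trivial step in upgrading the single-point argument to the joint estimate.
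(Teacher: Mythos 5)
Your approach is essentially the paper's own argument recast: the paper proves \eqref{eqlemkahn1} by establishing, with $a_n=\mathbb{P}\left(\text{$V^x_{r_k}\geq v_k$ for $k\in\llbracket n,K\rrbracket$}\right)$, the recurrence $a_n\leq\sum_{n<p\leq K+1}\mathbb{P}\left(V^x_{r_{p-1}}\geq v_n\right)\cdot a_p$, obtained by peeling off the largest $p$ with $V^x_{r_p}\geq v_n$ and using Poisson independence of the annulus $\left[\overline{B}(x,r_p)\right]\setminus\left[\overline{B}(x,r_{p+1})\right]$ at speed $\geq v_n$ from the later scales. Unwinding this recurrence is exactly your greedy interval cover, and your regions $A_i^\star$ are the very annuli the paper uses. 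Your part (\ref{lemkahn1}) is therefore complete and correct, and the ``roads witness contiguous scale intervals'' framing is a clean way of packaging it.

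For part (\ref{lemkahn2}), however, there is a genuine gap precisely where you flag the ``main obstacle.'' Your greedy rule --- ``select a road of maximum speed that advances the state as far as possible'' --- is ambiguous (two competing objectives), and more importantly it does not by itself yield disjoint regions for the East and North steps. Concretely, when an East step is taken from $(k_i,l_i)$ with $k_i\leq l_i$, you need the selected road to lie \emph{outside} $\left[\overline{B}(y,r_{l_i})\right]$; otherwise its region at $x$ can overlap, in the $y$ coordinate, with a later North-step region. This extra constraint is not a consequence of ``targeting the most demanding scale'': it requires the explicit dichotomy the paper uses --- on the peeled event, either some road with speed $\geq v_{k_i\wedge l_i}$ through $\overline{B}(x,r_p)$ also meets $\overline{B}(y,r_{l_i})$ (then peel the largest $q$ with $V^{x,y}_{r_p,r_q}\geq v_{k_i\wedge l_i}$, a Northeast step), or no such road does, and the East-step road is forced into $\left(\left[\overline{B}(x,r_p)\right]\setminus\left[\overline{B}(x,r_{p+1})\right]\right)\setminus\left[\overline{B}(y,r_{l_i})\right]$. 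Without this case split you have not established that the reserved regions are pairwise disjoint, so the step ``Poisson independence converts the intersection probability into a product'' is unjustified for the two-point case. Supplying that dichotomy (and then verifying, as one can, that the resulting annular boxes at $x$ and $y$ are pairwise disjoint along any monotone lattice path) is exactly the content of the paper's proof of \eqref{eqreclemkahn2}.
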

The first point was developed by Kahn in \cite[proof of Theorem 5.1]{kahn}, in a context where the scales $r_k$ are of order at least $|x-y|$, so that looking at both balls $\overline{B}(x,r_k)$ and $\overline{B}(y,r_k)$ is redundant.
Here in \eqref{eqproofbs}, we want to look at scales $r_k$ below $1$ around points $0$ and $e_d$, and thus we cannot afford to lose the information on the second point.
\begin{proof}
\begin{enumerate}
\item Fix $K\in\mathbb{N}$, and let $a_n=\mathbb{P}\left(\text{$V^x_{r_k}\geq v_k$ for all $k\in\llbracket n,K\rrbracket$}\right)$ for every $n\in\llbracket0,K+1\rrbracket$.
We claim that the sequence $(a_n)_{n\in\llbracket0,K+1\rrbracket}$ satisfies the following recurrence relation: for every $n\in\llbracket0,K\rrbracket$, we have
\begin{equation}\label{eqreclemkahn1}
a_n\leq\sum_{n<p\leq K+1}\mathbb{P}\left(V^x_{r_{p-1}}\geq v_n\right)\cdot a_p.
\end{equation}
Since $a_{K+1}=1$, this yields \eqref{eqlemkahn1}.
To prove \eqref{eqreclemkahn1}, let $n\in\llbracket0,K\rrbracket$.
Since $n\leq K$, on the event $A_n$ we have $V^x_{r_n}\geq v_n$.
Looking at the largest integer $p\geq n$ such that $V^x_{r_p}\geq v_n$, we write
\[a_n=\sum_{n\leq p<K}\mathbb{P}\left(\text{$V^x_{r_p}\geq v_n>V^x_{r_{p+1}}$ ; $V^x_{r_k}\geq v_k$ for all $k\in\llbracket p+1,K\rrbracket$}\right)+\mathbb{P}\left(V^x_{r_K}\geq v_n\right).\]
Let us call $B_p$ the event
\[\left(\text{$V^x_{r_p}\geq v_n>V^x_{r_{p+1}}$ ; $V^x_{r_k}\geq v_k$ for all $k\in\llbracket p+1,K\rrbracket$}\right).\]
On $B_p$, we have $X_p:=\Pi\left(\left.\left[\overline{B}(x,r_p)\right]\middle\backslash\left[\overline{B}(x,r_{p+1})\right]\right.\times[v_n,\infty[\right)>0$, and $X_p$ is independent of the random variables $\left(V^x_{r_k},~k\in\llbracket p+1,K\rrbracket\right)$: indeed, these random variables are all measurable with respect to the $\sigma$-algebra generated by the restriction of the Poisson random measure $\Pi$ to $\left[\overline{B}(x,r_{p+1})\right]\times\mathbb{R}_+^*$. 
Thus, we obtain
\[\mathbb{P}(B_p)\leq\mathbb{P}(X_p>0)\cdot\mathbb{P}\left(\text{$V^x_{r_k}\geq v_k$ for all $k\in\llbracket p+1,K\rrbracket$}\right)\leq\mathbb{P}\left(V^x_{r_p}\geq v_n\right)\cdot a_{p+1}.\]
Plugging this bound into the previous equation proves \eqref{eqreclemkahn1}, upon reindexing the sum.
\item The idea is the same as before, but things are a bit trickier since we are dealing with two points: at each scale $r_k$ around points $x$ and $y$, the speed $v_k$ can come from the same road.

Fix $K\in\mathbb{N}$.
For every $m,n\in\llbracket0,K+1\rrbracket$, denote by $A_{m,n}$ the event 
\[\left(\text{$V^x_{r_k}\geq v_k$ for all $k\in\llbracket m,K\rrbracket$ and $V^y_{r_k}\geq v_k$ for all $k\in\llbracket n,K\rrbracket$}\right),\]
and let $a_{m,n}=\mathbb{P}(A_{m,n})$.
We claim that the sequence $(a_{m,n})_{m,n\in\llbracket0,K+1\rrbracket}$ satisfies the following recurrence relation: for every $m,n\in\llbracket0,K+1\rrbracket$ with $(m,n)\prec(K+1,K+1)$, we have
\begin{equation}\label{eqreclemkahn2}
a_{m,n}\leq\sum_{(m,n)\prec(p,q)\preceq(K+1,K+1)}\varpi_{m,n}^{p,q}\cdot a_{p,q}.
\end{equation}
Recall that $(m,n)\prec(p,q)$ means $(m,n)\preceq(p,q)$ and ${(p,q)\neq(m,n)}$, where $(m,n)\preceq(p,q)$ stands for $p\geq m$ and $q\geq n$.
We also recall the definition of the $\varpi_{m,n}^{p,q}$ terms:
\[\varpi_{m,n}^{p,q}=\begin{cases}
\mathbb{P}\left(V^{x,y}_{r_{p-1},r_{q-1}}\geq v_{m\wedge n}\right)&\text{if $p>m$ and $q>n$},\\
\mathbb{P}\left(V^x_{r_{p-1}}\geq v_{m\wedge n}\right)&\text{if $p>m$ and $q=n$},\\
\mathbb{P}\left(V^y_{r_{q-1}}\geq v_{m\wedge n}\right)&\text{if $p=m$ and $q>n$}.
\end{cases}\]
Since $a_{K+1,K+1}=1$, the fact that $(a_{m,n})_{m,n\in\llbracket0,K+1\rrbracket}$ satisfies the above recurrence relation yields \eqref{eqlemkahn2}.
To prove \eqref{eqreclemkahn2}, let $m,n\in\llbracket0,K+1\rrbracket$ be such that $(m,n)\prec(K+1,K+1)$.
By invariance, we have the symmetry ${a_{m,n}=a_{n,m}}$, so we might as well assume $m\leq n$.
In particular, we have $m\leq K$, and on the event $A_{m,n}$ we have $V^x_{r_m}\geq v_m$.
Looking at the largest integer $p\geq m$ such that $V^x_{r_p}\geq v_m$, we write
\begin{equation}\label{eqlemkahn2proof1}
a_{m,n}=\sum_{m\leq p<K}\mathbb{P}\left(V^x_{r_p}\geq v_m>V^x_{r_{p+1}}~;~A_{p+1,n}\right)+\mathbb{P}\left(V^x_{r_K}\geq v_m~;~A_{K+1,n}\right).
\end{equation}
Let us call $B_p$ the event
\[\left(V^x_{r_p}\geq v_m>V^x_{r_{p+1}}~;~A_{p+1,n}\right).\]
On $B_p$, either (a) a road with speed limit $v\geq v_m$ passes trough both balls $\overline{B}(x,r_p)$ and $\overline{B}(y,r_n)$, or (b) the ball $\overline{B}(x,r_p)$ is traversed by a road with speed limit $v\geq v_m$ that does not pass through $\overline{B}(y,r_n)$.
\begin{itemize}
\item In case (a), looking at the largest integer $q\geq n$ such that $V^{x,y}_{r_p,r_q}\geq v_m$, we write
\[\mathbb{P}\left(B_p~;~V^{x,y}_{r_p,r_n}\geq v_m\right)\leq\sum_{n\leq q<K}\mathbb{P}\left(B_p~;~V^{x,y}_{r_p,r_q}\geq v_m>V^{x,y}_{r_p,r_{q+1}}\right)+\mathbb{P}\left(B_p~;~V^{x,y}_{r_p,r_K}\geq v_m\right).\]
On the event $\left(B_p~;~V^{x,y}_{r_p,r_q}\geq v_m>V^{x,y}_{r_p,r_{q+1}}\right)$, we have 
\[X_{p,q}:=\Pi\left(\left(\left[\overline{B}(x,r_p)\right]\middle\backslash\left[\overline{B}(x,r_{p+1})\right]\right)\cap\left(\left[\overline{B}(y,r_q)\right]\middle\backslash\left[\overline{B}(x,r_{q+1})\right]\right)\times[v_m,\infty[\right)>0,\]
and the event $A_{p+1,q+1}$ is realised.
Since $A_{p+1,q+1}$ belongs to the $\sigma$-algebra generated by the restriction of the Poisson random measure $\Pi$ to $\left(\left[\overline{B}(x,r_{p+1})\right]\cup\left[\overline{B}(y,r_{q+1})\right]\right)\times\mathbb{R}_+^*$, it is independent of $X_{p,q}$, and thus we obtain
\[\begin{split}
\mathbb{P}\left(B_p~;~V^{x,y}_{r_p,r_q}\geq v_m>V^{x,y}_{r_p,r_{q+1}}\right)&\leq\mathbb{P}(X_{p,q}>0)\cdot\mathbb{P}(A_{p+1,q+1})\\
&\leq\mathbb{P}\left(V^{x,y}_{r_p,r_q}\geq v_m\right)\cdot\mathbb{P}(A_{p+1,q+1})=\varpi_{m,n}^{p+1,q+1}\cdot a_{p+1,q+1}.
\end{split}\]
Similarly, we have
\[\mathbb{P}\left(B_p~;~V^{x,y}_{r_p,r_K}\geq v_m\right)\leq\varpi_{m,n}^{p+1,K+1}\cdot a_{p+1,K+1}.\]
We get
\[\mathbb{P}\left(B_p~;~V^{x,y}_{r_p,r_n}\geq v_m\right)\leq\sum_{n\leq q<K}\varpi_{m,n}^{p+1,q+1}\cdot a_{p+1,q+1}+\varpi_{m,n}^{p+1,K+1}\cdot a_{p+1,K+1}=\sum_{n<q\leq K+1}\varpi_{m,n}^{p+1,q}\cdot a_{p+1,q}.\]
\item In case (b), we have
\[Y_p:=\Pi\left(\left.\left(\left[\overline{B}(x,r_p)\right]\middle\backslash\left[\overline{B}(x,r_{p+1})\right]\right)\middle\backslash\left[\overline{B}(y,r_n)\right]\right.\times[v_m,\infty[\right)>0,\]
and the event $A_{p+1,n}$ is realised.
Since $A_{p+1,n}$ belongs to the $\sigma$-algebra generated by the restriction of the Poisson random measure $\Pi$ to $\left(\left[\overline{B}(x,r_{p+1})\right]\cup\left[\overline{B}(y,r_n)\right]\right)\times\mathbb{R}_+^*$, it is independent of $Y_p$, and thus we obtain
\[\mathbb{P}\left(B_p~;~V^x_{r_p}\geq v_m>V^y_{r_n}\right)\leq\mathbb{P}(Y_p>0)\cdot\mathbb{P}(A_{p+1,n})\leq\mathbb{P}\left(V^x_{r_p}\geq v_m\right)\cdot\mathbb{P}(A_{p+1,n})=\varpi_{m,n}^{p+1,n}\cdot a_{p+1,n}.\]
\end{itemize}
With the above distinction, we see that
\[\mathbb{P}(B_p)\leq\sum_{n<q\leq K+1}\varpi_{m,n}^{p+1,q}\cdot a_{p+1,q}+\varpi_{m,n}^{p+1,n}\cdot a_{p+1,n}=\sum_{q=n}^{K+1}\varpi_{m,n}^{p+1,q}\cdot a_{p+1,q}.\]
Similarly, we have
\[\mathbb{P}\left(V^x_{r_K}\geq v_m~;~A_{K+1,n}\right)\leq\sum_{q=n}^{K+1}\varpi_{m,n}^{K+1,q}\cdot a_{K+1,q}.\]
Plugging these bounds into \eqref{eqlemkahn2proof1}, we get
\[a_{m,n}\leq\sum_{m\leq p<K}\sum_{q=n}^{K+1}\varpi_{m,n}^{p+1,q}\cdot a_{p+1,q}+\sum_{q=n}^{K+1}\varpi_{m,n}^{K+1,q}\cdot a_{K+1,q}=\sum_{m<p\leq K+1}\sum_{q=n}^{K+1}\varpi_{m,n}^{p,q}\cdot a_{p,q},\]
which proves \eqref{eqreclemkahn2}.
\end{enumerate}
\end{proof}

In order to extract some quantitative information from the previous lemma, consider the following technical proposition, which will be used several times in the paper.

\begin{prop}\label{propmultiscale}
Let $(r_k)_{k\in\mathbb{N}}$ and $(v_k)_{k\in\mathbb{N}}$ be decreasing sequences of positive real numbers.
\begin{enumerate}
\item Let $x\in\mathbb{R}^d$.
For any integers $0\leq n\leq K$, we have
\[\mathbb{P}\left(\text{$V^x_{r_k}\geq v_k$ for all $k\in\llbracket n,K\rrbracket$}\right)\leq r_K^{d-1}\cdot v_n^{-(\gamma-1)}\cdot\exp\left[\sum_{k=n+1}^Kr_{k-1}^{d-1}\cdot v_k^{-(\gamma-1)}\right].\]
\item Let $x\neq y\in\mathbb{R}^d$.
Assume that $\left(r_k=|x-y|\cdot2^{-k}\right)_{k\in\mathbb{N}}$, and $(v_k=r_k/t)_{k\in\mathbb{N}}$, for some $t\in\mathbb{R}_+^*$.
There exists positive constants $C$ and $c$, which do not depend on $x$, $y$ or $t$, such that for every $K\in\mathbb{N}$:
\[\mathbb{P}\left(\text{$V^x_{r_k}\geq v_k$ and $V^y_{r_k}\geq v_k$ for all $k\in\llbracket0,K\rrbracket$}\right)\leq C\cdot\frac{r_K^{d-1}\cdot r_K^{d-1}}{|x-y|^{d-1}}\cdot v_0^{-(\gamma-1)}\cdot\exp\left[c\cdot r_K^{-(\gamma-d)}\cdot t^{\gamma-1}\right].\]
\end{enumerate}
\end{prop}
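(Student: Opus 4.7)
For both assertions, the plan is to invoke Lemma \ref{lemkahn} to express the probability as a sum over multiscale paths, to estimate each $\varpi$-type factor by the first-order Taylor bound $1-e^{-u} \leq u$ via \eqref{eqnorm}, \eqref{eqVxr} and Lemma \ref{lemdroitesboules}, and then to recognise the resulting combinatorial sum as the expansion of a product of $(1+\cdot)$ factors, bounded above by an exponential.

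For the first assertion, I would apply Lemma \ref{lemkahn}.\ref{lemkahn1} and bound each probability factor by $r_{k_{i+1}-1}^{d-1} v_{k_i}^{-(\gamma-1)}$. Peeling off the boundary factors coming from $k_0 = n$ and $k_{j+1} = K+1$ leaves the summand in the form $r_K^{d-1} v_n^{-(\gamma-1)} \cdot \prod_{i=1}^{j} r_{k_i-1}^{d-1} v_{k_i}^{-(\gamma-1)}$. Summing over $n < k_1 < \cdots < k_j \leq K$ and $j \geq 0$ yields $r_K^{d-1} v_n^{-(\gamma-1)} \cdot \prod_{k=n+1}^{K}\bigl(1 + r_{k-1}^{d-1} v_k^{-(\gamma-1)}\bigr)$, which is bounded above by the stated exponential via $1+u \leq e^u$.

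For the second assertion, I would apply Lemma \ref{lemkahn}.\ref{lemkahn2} with $m = n = 0$ and estimate each $\varpi_{k_i,l_i}^{k_{i+1},l_{i+1}}$ by $r_{k_{i+1}-1}^{d-1} v_{k_i \wedge l_i}^{-(\gamma-1)}$, $r_{l_{i+1}-1}^{d-1} v_{k_i \wedge l_i}^{-(\gamma-1)}$, or a constant multiple of $r_{k_{i+1}-1}^{d-1} r_{l_{i+1}-1}^{d-1} |x-y|^{-(d-1)} v_{k_i \wedge l_i}^{-(\gamma-1)}$, according to whether the step is horizontal, vertical, or joint (using \eqref{eqVxr} and Lemma \ref{lemdroitesboules}). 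Under the scaling $r_k = |x-y| \cdot 2^{-k}$ and $v_k = r_k/t$, one computes $r_{k-1}^{d-1} v_k^{-(\gamma-1)} = 2^{d-1} \cdot |x-y|^{d-\gamma} \cdot t^{\gamma-1} \cdot 2^{k(\gamma-d)}$, so its partial sums over $k \in \llbracket 1, K \rrbracket$ are comparable to $r_K^{-(\gamma-d)} t^{\gamma-1}$ (geometric series of ratio $2^{\gamma-d}>1$). I would then split the path sum according to whether the path contains at least one joint step: if so, that step supplies (up to constants, using monotonicity of $r$ and $v$) the required prefactor $r_K^{d-1} \cdot r_K^{d-1} \cdot |x-y|^{-(d-1)} \cdot v_0^{-(\gamma-1)}$, with the surrounding subpaths bounded by iterating the same dichotomy; if not, the path is purely horizontal--vertical, and factorising it into maximal monotone segments (on which $k_i \wedge l_i$ is constant) and applying the first assertion to each yields a product of two one-dimensional bounds, the ``missing'' $|x-y|^{d-1}$ factor being absorbed into the exponential via the elementary inequality $x \leq e^x$ applied to $r_K^{-(\gamma-d)} t^{\gamma-1} \geq r_0^{-(\gamma-d)} t^{\gamma-1} = |x-y|^{d-\gamma} t^{\gamma-1}$.

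The principal obstacle is the two-dimensional combinatorics in the second assertion: the factor $v_{k_i \wedge l_i}^{-(\gamma-1)}$ couples the $k$- and $l$-coordinates, preventing a direct decoupling into independent one-dimensional sums. The key observation is that purely horizontal--vertical paths with arbitrary H/V interleavings can be cut into maximal monotone sub-segments, on each of which a $(1+\cdot)$ expansion as in the first assertion applies; iterating this produces only exponential-type corrections, all absorbed into the single exponential $\exp\bigl[c \cdot r_K^{-(\gamma-d)} t^{\gamma-1}\bigr]$.
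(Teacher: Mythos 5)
Your treatment of item 1 is correct and essentially the paper's proof: apply Lemma \ref{lemkahn}.\ref{lemkahn1}, bound each $\mathbb{P}(V^x_{r_{k_{i+1}-1}}\geq v_{k_i})$ by $r_{k_{i+1}-1}^{d-1}v_{k_i}^{-(\gamma-1)}$, peel off the boundary factors, and sum. The only variation is cosmetic: you recognise the path sum as $\prod_{k=n+1}^K\bigl(1+r_{k-1}^{d-1}v_k^{-(\gamma-1)}\bigr)$ and use $1+u\leq e^u$, whereas the paper relaxes the increasing constraint by a $1/j!$ factor and sums the Taylor series of $\exp$. Both give the same bound.

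Your item 2, however, has a genuine gap. The difficulty you correctly identify — the factor $v_{k_i\wedge l_i}^{-(\gamma-1)}$ couples the two coordinates so the 2D path sum does not split into two 1D sums — is exactly what the paper's argument is designed to remove, and your proposed fix does not resolve it. The paper's key move is to replace the case-dependent bounds on $\varpi_{m,n}^{p,q}$ by a single \emph{symmetric} bound, valid in all three cases (horizontal, vertical, joint), namely
\[
\varpi_{m,n}^{p,q}\;\leq\;C\cdot\frac{r_{p-1}^{d-1}\,r_{q-1}^{d-1}\,t^{\gamma-1}}{r_m^{(d+\gamma-2)/2}\,r_n^{(d+\gamma-2)/2}},
\]
which follows from $|x-y|^{d-1}r_{m\wedge n}^{\gamma-1}\geq r_m^{(d+\gamma-2)/2}r_n^{(d+\gamma-2)/2}$ (an AM--GM type inequality using $r_0=|x-y|\geq r_m\vee r_n$). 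Once each $\varpi$ factor has this symmetric form, the product over a path telescopes exactly as in item 1 — only $r_{k_i-1}^{-(\gamma-d)/2}r_{l_i-1}^{-(\gamma-d)/2}$ survives at each intermediate point — and the $1/j!$ trick goes through, yielding the exponential with exponent $\bigl(\sum_k r_{k-1}^{-(\gamma-d)/2}\bigr)^2 t^{\gamma-1}\asymp r_K^{-(\gamma-d)}t^{\gamma-1}$. Your alternative (split on the presence of a joint step, recurse, cut purely H/V paths into "maximal monotone segments on which $k\wedge l$ is constant") is not precise — during a horizontal run with $l$ fixed, $k\wedge l$ can increase if the run starts with $k<l$ — and you give no control on the combinatorial explosion of H/V interleavings (there are $\binom{2(K+1)}{K+1}$ of them). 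The claim that this "produces only exponential-type corrections" is asserted but not demonstrated, and the recursion on subpaths surrounding a joint step is left entirely unsolved. As written, item 2 would not compile into a proof.
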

\begin{rem}
In the other direction, we have the obvious lower bounds
\[\mathbb{P}\left(\text{$V^x_{r_k}\geq v_k$ for all $k\in\llbracket n,K\rrbracket$}\right)\geq\mathbb{P}\left(V^x_{r_K}\geq v_n\right)=1-\exp\left[-r_K^{d-1}\cdot v_n^{-(\gamma-1)}\right],\]
and
\[\mathbb{P}\left(\text{$V^x_{r_k}\geq v_k$ and $V^y_{r_k}\geq v_k$ for all $k\in\llbracket0,K\rrbracket$}\right)\geq\mathbb{P}\left(V^{x,y}_{r_K}\geq v_0\right)\geq1-\exp\left[c'\cdot\frac{r_K^{d-1}\cdot r_K^{d-1}}{|x-y|^{d-1}}\cdot v_0^{-(\gamma-1)}\right],\]
where $c'>0$ is a constant obtained from Lemma \ref{lemdroitesboules}.
Under mild assumptions, these lower bounds match the above upper bounds up to multiplicative constants.
Thus, informally, if one asks $\Pi$ to put fast roads at all scales near one or two points, then the less expensive way for that to be realised is to have one very fast road passing very close to the prescribed points, and which alone is responsible for all these events.
This was formulated by Kahn for the one-point case in \cite[Remark 5.1]{kahn}.
\end{rem}
\begin{proof}
Let $K\in\mathbb{N}$.
\begin{enumerate}
\item By item \ref{lemkahn1} of the multiscale lemma, we have
\[\mathbb{P}\left(\text{$V^x_{r_k}\geq v_k$ for all $k\in\llbracket n,K\rrbracket$}\right)\leq\sum_{j=0}^{K-n}\sum_{n=k_0<\ldots<k_{j+1}=K+1}\prod_{i=0}^j\mathbb{P}\left(V^x_{r_{k_{i+1}-1}}\geq v_{k_i}\right).\]
Using the inequality $\mathbb{P}(\mathrm{Poisson}(\lambda)>0)\leq\lambda$, we have
\[\mathbb{P}\left(V^x_{r_{k_{i+1}-1}}\geq v_{k_i}\right)\leq r_{k_{i+1}-1}^{d-1}\cdot v_{k_i}^{-(\gamma-1)}\quad\text{for all integers $k_{i+1}>k_i$,}\]
hence
\[\prod_{i=0}^j\mathbb{P}\left(V^x_{r_{k_{i+1}-1}}\geq v_{k_i}\right)\leq r_K^{d-1}\cdot v_n^{-(\gamma-1)}\cdot\prod_{i=1}^jr_{k_i-1}^{d-1}\cdot v_{k_i}^{-(\gamma-1)}.\]
It follows that
\[\begin{split}
\sum_{0=k_0<\ldots<k_{j+1}=K+1}\prod_{i=0}^j\mathbb{P}\left(V^x_{r_{k_{i+1}-1}}\geq v_{k_i}\right)&\leq r_K^{d-1}\cdot v_n^{-(\gamma-1)}\cdot\sum_{n=k_0<\ldots<k_{j+1}=K+1}\prod_{i=1}^jr_{k_i-1}^{d-1}\cdot v_{k_i}^{-(\gamma-1)}\\
&\leq r_K^{d-1}\cdot v_n^{-(\gamma-1)}\cdot\frac{1}{j!}\cdot\sum_{n+1\leq k_1,\ldots,k_j\leq K}\prod_{i=1}^jr_{k_i-1}^{d-1}\cdot v_{k_i}^{-(\gamma-1)}\\
&=r_K^{d-1}\cdot v_n^{-(\gamma-1)}\cdot\frac{1}{j!}\cdot\left[\sum_{k=n+1}^Kr_{k-1}^{d-1}\cdot v_k^{-(\gamma-1)}\right]^j.
\end{split}\]
Finally, summing over $j$ we obtain
\[\sum_{j=0}^{K-n}\sum_{n=k_0<\ldots<k_{j+1}=K+1}\prod_{i=0}^j\mathbb{P}\left(V^x_{r_{k_{i+1}-1}}\geq v_{k_i}\right)\leq r_K^{d-1}\cdot v_n^{-(\gamma-1)}\cdot\exp\left[\sum_{k=n+1}^Kr_{k-1}^{d-1}\cdot v_k^{-(\gamma-1)}\right].\]

\item By item \ref{lemkahn2} of the multiscale lemma, we have (with the notation introduced in Lemma \ref{lemkahn}):
\[\mathbb{P}\left(\text{$V^x_{r_k}\geq v_k$ and $V^y_{r_k}\geq v_k$ for all $k\in\llbracket 0,K\rrbracket$}\right)\leq\sum_{j=0}^{2K+1}\sum_{(0,0)=(k_0,l_0)\prec\ldots\prec(k_{j+1},l_{j+1})=(K+1,K+1)}\prod_{i=0}^j\varpi_{k_i,l_i}^{k_{i+1},l_{i+1}}.\]
To continue, recall the assumptions: we have $r_k=|x-y|\cdot2^{-k}$ for all $k\in\mathbb{N}$, and $v_k=r_k/t$ for all $k\in\mathbb{N}$, for some $t\in\mathbb{R}_+^*$.
We claim that there exists a constant $C>0$, not depending on $x$, $y$ or $t$, such that
\begin{equation}\label{eqpreviouspoint}
\varpi_{m,n}^{p,q}\leq C\cdot\frac{r_{p-1}^{d-1}\cdot r_{q-1}^{d-1}\cdot t^{\gamma-1}}{r_m^{(d+\gamma-2)/2}\cdot r_n^{(d+\gamma-2)/2}}\quad\text{for all integers $(p,q)\succ(m,n)$},
\end{equation}
where for convenience we have set $r_{-1}=2|x-y|$.
Indeed, let $m,n,p,q\in\mathbb{N}$ be such that $(p,q)\succ(m,n)$.
\begin{itemize}
\item If $p>m$ and $q>n$, then $\varpi_{m,n}^{p,q}=\mathbb{P}\left(V^{x,y}_{r_{p-1},r_{q-1}}\geq v_{m\wedge n}\right)$.
Using the inequality ${\mathbb{P}(\mathrm{Poisson}(\lambda)>0)\leq\lambda}$ and Lemma \ref{lemdroitesboules}, we see that there exists a constant $C>0$ such that
\[\varpi_{m,n}^{p,q}\leq C\cdot\frac{r_{p-1}^{d-1}\cdot r_{q-1}^{d-1}}{|x-y|^{d-1}}\cdot v_{m\wedge n}^{-(\gamma-1)}=C\cdot\frac{r_{p-1}^{d-1}\cdot r_{q-1}^{d-1}\cdot t^{\gamma-1}}{|x-y|^{d-1}\cdot r_{m\wedge n}^{\gamma-1}}\leq C\cdot\frac{r_{p-1}^{d-1}\cdot r_{q-1}^{d-1}\cdot t^{\gamma-1}}{r_m^{(d+\gamma-2)/2}\cdot r_n^{(d+\gamma-2)/2}}.\]
\item If $p>m$ and $q=n$, then $\varpi_{m,n}^{q,n}=\mathbb{P}\left(V^x_{r_{p-1}}\geq v_{m\wedge n}\right)$.
Using the inequality $\mathbb{P}(\mathrm{Poisson}(\lambda)>0)\leq\lambda$, we see that
\[\varpi_{m,n}^{p,q}\leq r_{p-1}^{d-1}\cdot v_{m\wedge n}^{-(\gamma-1)}=2^{-(d-1)}\cdot\frac{r_{p-1}^{d-1}\cdot r_{q-1}^{d-1}\cdot t^{\gamma-1}}{r_n^{d-1}\cdot r_{m\wedge n}^{\gamma-1}}\leq2^{-(d-1)}\cdot\frac{r_{p-1}^{d-1}\cdot r_{q-1}^{d-1}\cdot t^{\gamma-1}}{r_m^{(d+\gamma-2)/2}\cdot r_n^{(d+\gamma-2)/2}}.\]
\item If $p=m$ and $q>n$, then $\varpi_{m,n}^{q,n}=\mathbb{P}\left(V^x_{r_{q-1}}\geq v_{m\wedge n}\right)$.
In the same way as above, we get
\[\varpi_{m,n}^{p,q}\leq2^{-(d-1)}\cdot\frac{r_{p-1}^{d-1}\cdot r_{q-1}^{d-1}\cdot t^{\gamma-1}}{r_m^{(d+\gamma-2)/2}\cdot r_n^{(d+\gamma-2)/2}}.\]
\end{itemize}
Now armed with \eqref{eqpreviouspoint}, we obtain
\[\begin{split}
\prod_{i=0}^j\varpi_{k_i,l_i}^{k_{i+1},l_{i+1}}&\leq C\cdot\frac{r_K^{d-1}\cdot r_K^{d-1}\cdot t^{\gamma-1}}{r_0^{(d+\gamma-2)/2}\cdot r_0^{(d+\gamma-2)/2}}\cdot\prod_{i=1}^j\left(C\cdot\frac{r_{k_i-1}^{d-1}\cdot r_{l_i-1}^{d-1}\cdot t^{\gamma-1}}{r_{k_i}^{(d+\gamma-2)/2}\cdot r_{l_i}^{(d+\gamma-2)/2}}\right)\\
&=C\cdot\frac{r_K^{d-1}\cdot r_K^{d-1}}{|x-y|^{d-1}}\cdot v_0^{-(\gamma-1)}\cdot\prod_{i=1}^j\frac{2^{d+\gamma-2}\cdot C\cdot t^{\gamma-1}}{r_{k_i-1}^{(\gamma-d)/2}\cdot r_{l_i-1}^{(\gamma-d)/2}}.
\end{split}\]
It follows that
\begin{eqnarray*}
\lefteqn{\sum_{(0,0)=(k_0,l_0)\prec\ldots\prec(k_{j+1},l_{j+1})=(K+1,K+1)}\prod_{i=0}^j\varpi_{k_i,l_i}^{k_{i+1},l_{i+1}}}\\
&\leq&C\cdot\frac{r_K^{d-1}\cdot r_K^{d-1}}{|x-y|^{d-1}}\cdot v_0^{-(\gamma-1)}\cdot\sum_{(0,0)=(k_0,l_0)\prec\ldots\prec(k_{j+1},l_{j+1})=(K+1,K+1)}\prod_{i=1}^j\frac{2^{d+\gamma-2}\cdot C\cdot t^{\gamma-1}}{r_{k_i-1}^{(\gamma-d)/2}\cdot r_{l_i-1}^{(\gamma-d)/2}}\\
&\leq&C\cdot\frac{r_K^{d-1}\cdot r_K^{d-1}}{|x-y|^{d-1}}\cdot v_0^{-(\gamma-1)}\cdot\frac{1}{j!}\cdot\sum_{(0,0)\prec(k_1,l_1);\ldots;(k_j,l_j)\prec(K+1,K+1)}\prod_{i=1}^j\frac{2^{d+\gamma-2}\cdot C\cdot t^{\gamma-1}}{r_{k_i-1}^{(\gamma-d)/2}\cdot r_{l_i-1}^{(\gamma-d)/2}}\\
&=&C\cdot\frac{r_K^{d-1}\cdot r_K^{d-1}}{|x-y|^{d-1}}\cdot v_0^{-(\gamma-1)}\cdot\frac{1}{j!}\cdot\left[\sum_{(0,0)\prec(k,l)\prec(K+1,K+1)}\frac{2^{d+\gamma-2}\cdot C\cdot t^{\gamma-1}}{r_{k-1}^{(\gamma-d)/2}\cdot r_{l-1}^{(\gamma-d)/2}}\right]^j\\
&\leq&C\cdot\frac{r_K^{d-1}\cdot r_K^{d-1}}{|x-y|^{d-1}}\cdot v_0^{-(\gamma-1)}\cdot\frac{1}{j!}\cdot\left[2^{d+\gamma-2}\cdot C\cdot\left(\sum_{k=0}^{K+1}r_{k-1}^{-(\gamma-d)/2}\right)^2\cdot t^{\gamma-1}\right]^j.
\end{eqnarray*}
Finally, summing over $j$ we obtain
\begin{multline*}
\sum_{j=0}^{2K+1}\sum_{(0,0)=(k_0,l_0)\prec\ldots\prec(k_{j+1},l_{j+1})=(K+1,K+1)}\prod_{i=0}^j\varpi_{k_i,l_i}^{k_{i+1},l_{i+1}}\\
\leq C\cdot\frac{r_K^{d-1}\cdot r_K^{d-1}}{|x-y|^{d-1}}\cdot v_0^{-(\gamma-1)}\cdot\exp\left[2^{d+\gamma-2}\cdot C\cdot\left(\sum_{k=0}^{K+1}r_{k-1}^{-(\gamma-d)/2}\right)^2\cdot t^{\gamma-1}\right].
\end{multline*}
The result follows, because $\sum_{k=0}^{K+1}r_{k-1}^{-(\gamma-d)/2}\leq r_K^{-(\gamma-d)/2}\cdot\sum_{l\geq0}2^{-(\gamma-d)/2\cdot l}$.
\end{enumerate}
\end{proof}

\subparagraph{The upper bound.}

With the previous results, we can now proceed with the upper bound for the quick connection probability.
Mercifully, all the work has been done already.
\begin{prop}\label{propbs}
There exists a constant $C>0$ such that
\[\text{$\mathbb{P}(T(0,e_d)\leq t)\leq C\cdot t^\sigma$ for all $t\in[0,1]$,\quad where $\sigma=\frac{(\gamma-1)(d+\gamma-2)}{\gamma-d}$.}\]
\end{prop}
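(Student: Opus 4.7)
The plan is essentially to combine equation \eqref{eqproofbs} with item 2 of Proposition \ref{propmultiscale}, which together almost immediately yield the desired bound. All the technical work has already been carried out.

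First I would fix $t \in {]0,1]}$, set $r_k = 2^{-k}$ for $k \in \mathbb{N}$, and let $K = K(t)$ denote the largest integer such that $r_K \geq t^{(\gamma-1)/(\gamma-d)}$; in particular $r_K$ is of order $t^{(\gamma-1)/(\gamma-d)}$, up to a factor bounded between $1/2$ and $1$. The heuristic argument preceding \eqref{eqproofbs} (travelling the Euclidean distance $r_k$ through $\overline{B}(0,r_k)$ and through $\overline{B}(e_d,r_k)$ at speed at most $V^0_{r_k}$ and $V^{e_d}_{r_k}$ respectively) shows that the event $(T(0,e_d) \leq t)$ forces the lower bounds $V^0_{r_k} \geq r_k/t$ and $V^{e_d}_{r_k} \geq r_k/t$ for every $k \in \llbracket 0, K\rrbracket$, whence \eqref{eqproofbs}.

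Next I would apply item 2 of Proposition \ref{propmultiscale} with $x = 0$, $y = e_d$ (so that $|x - y| = 1$), $r_k = 2^{-k}$ and $v_k = r_k/t$. The proposition yields
\[
\mathbb{P}(T(0,e_d) \leq t) \leq C \cdot r_K^{2(d-1)} \cdot v_0^{-(\gamma-1)} \cdot \exp\!\left[c \cdot r_K^{-(\gamma-d)} \cdot t^{\gamma-1}\right].
\]
Since $v_0 = 1/t$, and by our choice of $K$ we have $r_K^{-(\gamma-d)} \cdot t^{\gamma-1} \leq 2^{\gamma-d}$, the exponential factor is bounded by a constant independent of $t$. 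Hence
\[
\mathbb{P}(T(0,e_d) \leq t) \leq C' \cdot r_K^{2(d-1)} \cdot t^{\gamma-1}.
\]

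Finally I would substitute $r_K \leq t^{(\gamma-1)/(\gamma-d)}$ and simplify: the exponent of $t$ becomes
\[
\frac{2(d-1)(\gamma-1)}{\gamma-d} + (\gamma-1) \;=\; \frac{(\gamma-1)\bigl(2(d-1)+(\gamma-d)\bigr)}{\gamma-d} \;=\; \frac{(\gamma-1)(d+\gamma-2)}{\gamma-d} \;=\; \sigma,
\]
which gives the claim. There is essentially no obstacle left; the one mild point to verify is that the exponential factor is genuinely controlled by a constant, which is precisely why the cutoff scale $r_K \sim t^{(\gamma-1)/(\gamma-d)}$ was chosen. The power $\sigma$ then arises as the natural scaling produced by balancing the two-ball density bound of Lemma \ref{lemdroitesboules} against the speed constraint $v_0 = 1/t$ at the macroscopic scale.
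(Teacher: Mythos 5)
Your proof is correct and is essentially identical to the paper's own proof: fix $t$, set $r_k=2^{-k}$, $v_k=r_k/t$, take $K$ the largest integer with $r_K\geq t^{(\gamma-1)/(\gamma-d)}$, apply \eqref{eqproofbs} followed by item 2 of Proposition~\ref{propmultiscale}, and observe that the cutoff at scale $K$ renders the exponential factor bounded. One small slip: you state $r_K\leq t^{(\gamma-1)/(\gamma-d)}$ (``up to a factor between $1/2$ and $1$''), but by the definition of $K$ one actually has $t^{(\gamma-1)/(\gamma-d)}\leq r_K<2t^{(\gamma-1)/(\gamma-d)}$; this makes the exponential factor $\leq e^{c}$ (even without the extra $2^{\gamma-d}$) and forces the substitution $r_K<2t^{(\gamma-1)/(\gamma-d)}$ in the final step, yielding the same exponent $\sigma$ with an extra harmless factor $2^{2(d-1)}$. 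The slip does not affect the validity of the argument.
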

\begin{proof}
Let $t\in{]0,1]}$.
Set $r_k=2^{-k}$ and $v_k=r_k/t$ for all $k\in\mathbb{N}$, and let ${K=\left\lfloor\log_2t^{-(\gamma-1)/(\gamma-d)}\right\rfloor}$ be the largest integer $k$ such that $r_k\geq t^{(\gamma-1)/(\gamma-d)}$.
By \eqref{eqproofbs}, we have
\[\mathbb{P}(T(0,e_d)\leq t)\leq\mathbb{P}\left(\text{$V^0_{r_k}\geq v_k$ and $V^{e_d}_{r_k}\geq v_k$ for all $k\in\llbracket0,K\rrbracket$}\right).\]
By Proposition \ref{propmultiscale}, there exists positive constants $C$ and $c$, not depending on $t$, such that
\[\mathbb{P}\left(\text{$V^0_{r_k}\geq v_k$ and $V^{e_d}_{r_k}\geq v_k$ for all $k\in\llbracket0,K\rrbracket$}\right)\leq C\cdot\frac{r_K^{d-1}\cdot r_K^{d-1}}{1^{d-1}}\cdot v_0^{-(\gamma-1)}\cdot\exp\left[c\cdot r_K^{-(\gamma-d)}\cdot t^{\gamma-1}\right].\]
The result follows, since $r_K\leq2t^{(\gamma-1)/(\gamma-d)}$, $v_0=1/t$, and $r_K^{-(\gamma-d)}\leq t^{-(\gamma-1)}$.
\end{proof}

\section{Lebesgue measure of $T$-balls}\label{secvolboules}

We pursue our analysis with the Lebesgue measure as mass distribution on $\left(\mathbb{R}^d,T\right)$, by examining the Lebesgue measure of balls for the metric $T$.
Recalling the notation ${\overline\Gamma(x,t)=\left\{y\in\mathbb{R}^d:T(x,y)\leq t\right\}}$ for the $T$-ball centered at $x\in\mathbb{R}^d$ with radius $t>0$, we are interested in the behaviour of the quantity $\lambda\left(\overline\Gamma(x,t)\right)$ as $t\to0^+$.

Note that for fixed $x\in\mathbb{R}^d$ and $t>0$, by invariance and scaling, we have the equality in distribution
\begin{equation}\label{eqvolboul}
\lambda\left(\overline\Gamma(x,t)\right)\overset{\text{\tiny law}}{=}\lambda\left(\overline\Gamma(0,1)\right)\cdot t^{s^*},
\end{equation}
where $s^*=(\gamma-1)d/(\gamma-d)$ is the Hausdorff dimension of $\left(\mathbb{R}^d,T\right)$.
Therefore, for ``typical'' points $x$ we should have $\lambda\left(\overline\Gamma(x,t)\right)\approx t^{s^*}$ for small $t$.
On the other hand, $T$-balls around points on roads should be bigger than typical balls, since $\Pi$-paths travelling in time $t$ from a point on a road can first use this road for a time $t/2$, and then still have time $t/2$ left to move around.
Heuristically, as illustrated in Figure \ref{figballsroads}, those balls should have Lebesgue measure $\approx t^{s_*}$, with ${s_*=s^*-(d-1)/(\gamma-d)}$.

\begin{figure}[ht]
\centering
\includegraphics[width=0.6\linewidth]{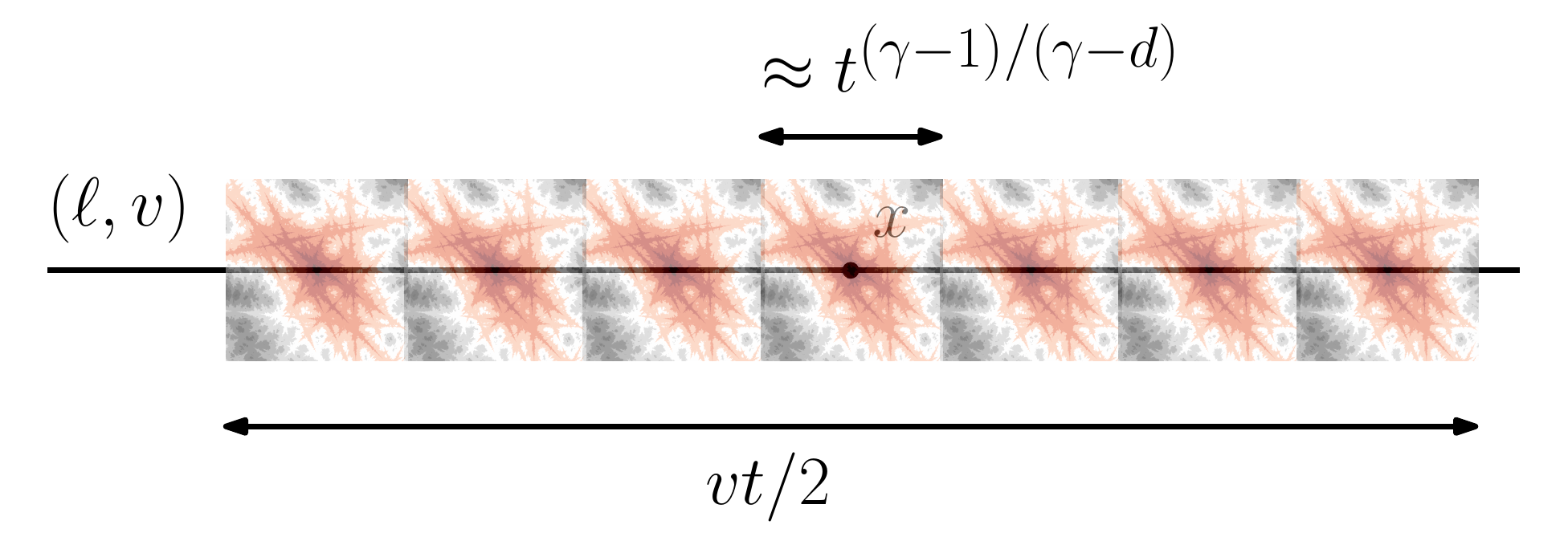}
\caption{Fix a realisation of $\Pi$. Let $(\ell,v)$ be a road of $\Pi$, and let $x\in\ell$. The $T$-ball $\overline\Gamma(x,t)$ contains the union $B_t=\bigcup_{y\in\ell:|x-y|\leq vt/2}\overline\Gamma(y,t/2)$.
On the picture, a ``typical'' $T$-ball of radius $t/2$ has been represented around $x$, and replicated along the line $\ell$.
Heuristically, this $T$-ball has Lebesgue measure $\approx t^{s^*}$, and thus spans an Euclidean distance $\approx t^{s^*/d}=t^{(\gamma-1)/(\gamma-d)}$.
Therefore, the union $B_t$ should have Lebesgue measure ${\approx t\cdot t^{-(\gamma-1)/(\gamma-d)}\cdot t^{s^*}=t^{s_*}}$ (the first two factors account for the number of ``disjoint'' typical $T$-balls of radius $t/2$ it takes to span an Euclidean distance $vt/2$, and the third term for the Lebesgue measure of a typical $T$-ball with radius $t/2$).}\label{figballsroads}
\begin{tabular}{lr}
\includegraphics[width=0.25\linewidth]{figures/simu4.png}&\includegraphics[width=0.25\linewidth]{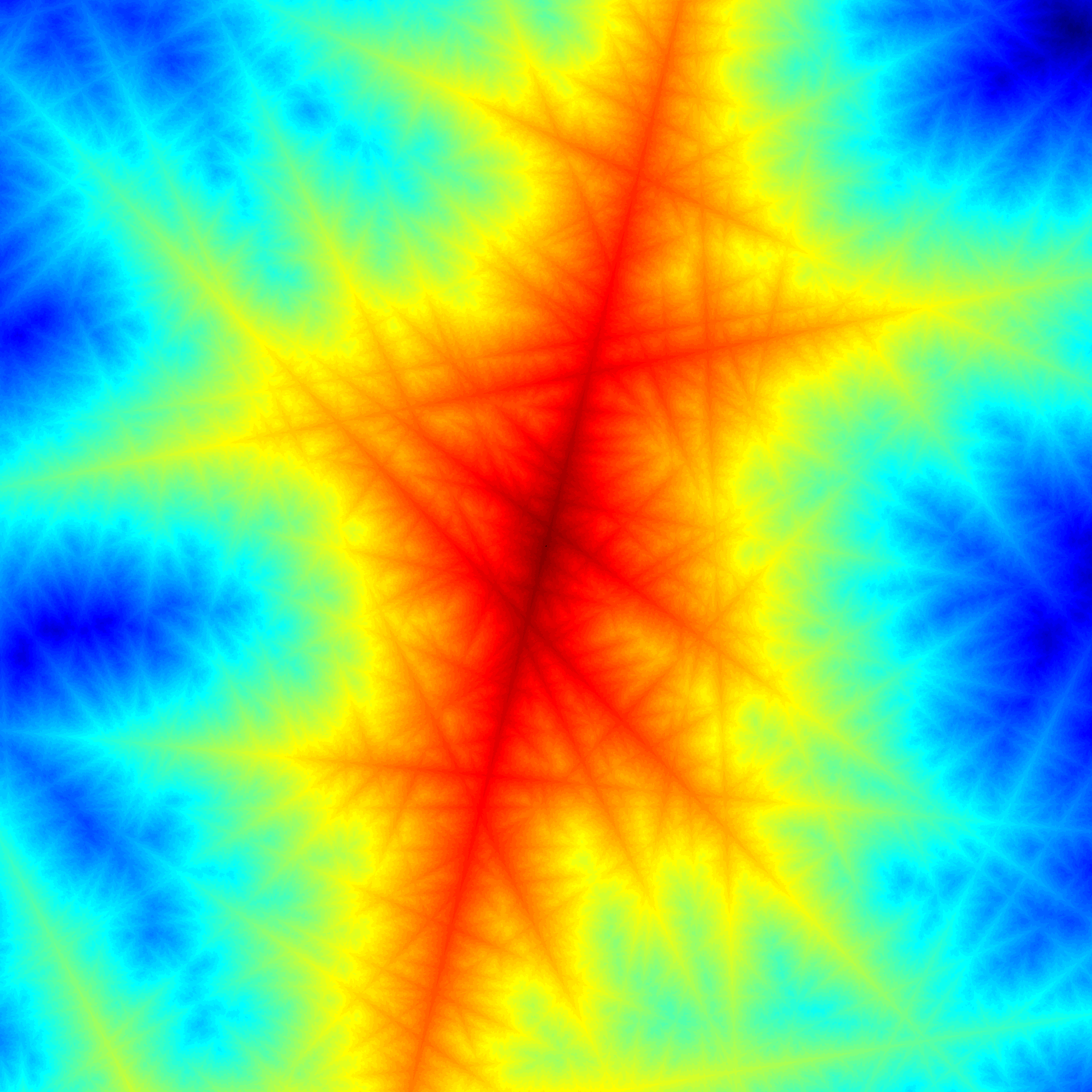}
\end{tabular}
\caption{Simulations by Arvind Singh. On the left hand side, a typical $T$-ball. On the right hand side, the origin is conditioned to be traversed by a road, and we seem to observe a shape similar to the one in Figure \ref{figballsroads}.}\label{figsimu'}
\end{figure}

In this section, the above heuristics are made rigorous in two different ways: in a first subsection we adopt a distributional point of view, before turning to the proof of Theorem \ref{thmvolboules} in Subsection \ref{subsecthmvolboul}.

\subsection{Distributional point of view}

Here we adopt a distributional point of view, describing the law of $\lambda\left(\overline\Gamma(X,t)\right)$ as $t\to0^+$, when $X$ is a point picked ``uniformly at random'', possibly on a road.
The main result of this subsection is Proposition \ref{propballsroads}

\paragraph{Typical points.}
Fixing $R>0$, let $X$ be a random variable with uniform distribution on $\overline{B}(0,R)$, independent of $\Pi$.
For every $t>0$, by invariance and scaling, we have the equality in distribution
\[\lambda\left(\overline\Gamma(X,t)\right)\overset{\text{\tiny law}}{=}\lambda\left(\overline\Gamma(0,1)\right)\cdot t^{s^*}.\]
In particular, trivially we have the convergence in distribution
\[\frac{\lambda\left(\overline\Gamma(X,t)\right)}{t^{s^*}}\underset{t\to0^+}{\overset{\text{\tiny law}}{\longrightarrow}}\lambda\left(\overline\Gamma(0,1)\right).\]
Note that in the above setting, almost surely $X$ is not on a road of $\Pi$. 
Let us now consider the case where $X$ is a point chosen ``uniformly at random'' on a road of $\Pi$.

\paragraph{Typical points on roads.}
Fixing $R>0$ and $v_0\in\mathbb{R}_+^*$, let us write here $N^R_{v_0}$ in place of $\Pi\left(\left[\overline{B}(0,R)\right]\times[v_0,\infty[\right)$ for short.
Define a random variable $X$ as follows: on the event $\left(N^R_{v_0}>0\right)$, pick a road $(\ell,v)$ of $\Pi$ in the set ${\left[\overline{B}(0,R)\right]\times[v_0,\infty[}$ uniformly at random, and let $X$ be a uniform point on the line segment $\ell\cap\overline{B}(0,R)$; on the complementary event $\left(N^R_{v_0}=0\right)$, just set $X=0$ (unimportant).
What we prove here is the following.

\begin{prop}\label{propballsroads}
There exists a constant $\Theta>0$, not depending on $R$ or $v_0$, such that the following holds: conditionally on the event $\left(N^R_{v_0}>0\right)$, we have the convergence in probability
\[\frac{\lambda\left(\overline\Gamma(X,t)\right)}{V(X)\cdot t^{s_*}}\underset{t\to0^+}{\longrightarrow}\Theta,\]
where $V(X)$ corresponds to the speed limit of the road on which lies $X$.
\end{prop}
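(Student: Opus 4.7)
The plan is to reduce the statement via Slivnyak--Mecke to a scaling limit and then establish it by a law-of-large-numbers argument along the axis road. By Slivnyak--Mecke, conditional on $N^R_{v_0} > 0$, picking a uniform road $(\ell, v) \in \Pi \cap ([\overline{B}(0,R)] \times [v_0, \infty[)$ and then $X$ uniform on $\ell \cap \overline{B}(0, R)$ has the same effect as studying the process $\Pi + \delta_{(\ell, v)}$ with $\Pi$ a fresh independent Poisson process and $V(X) = v$. Translation and rotation invariance reduce to $X = 0$, $\ell = \mathbb{R} e_d$; conditioning on $V(X) = v$ and using dominated convergence, the claim becomes: for each $v > 0$, the metric $T^v$ induced by $\Pi + \delta_{(\mathbb{R} e_d, v)}$ satisfies $\lambda(\overline\Gamma^v(0, t))/(v\,t^{s_*}) \xrightarrow[t \to 0^+]{\mathbb{P}} \Theta$ for some deterministic $\Theta$. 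Proposition \ref{propscaling} extends to $T^v$ with the axis speed transforming as $v \mapsto v/r^{(d-1)/(\gamma-1)}$ under spatial rescaling by $r$ (indeed, $\phi_r^{-1}$ sends the axis to itself); taking $r = t^{(\gamma-1)/(\gamma-d)}$ yields the distributional identity
\[\frac{\lambda(\overline\Gamma^v(0, t))}{v \cdot t^{s_*}} \stackrel{\mathrm{d}}{=} \frac{G(w)}{w}, \qquad w := \frac{v}{t^{(d-1)/(\gamma-d)}} \xrightarrow[t \to 0]{} +\infty, \qquad G(w) := \lambda(\overline\Gamma^w(0, 1)),\]
so the problem boils down to $G(w)/w \xrightarrow{\mathbb{P}} \Theta$ as $w \to \infty$.

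For large $w$ the heuristic of Figure \ref{figballsroads} becomes rigorous. Slicing the ball orthogonally to $e_d$ and changing variable $a = wb$:
\[\frac{G(w)}{w} = \int_{-1}^{1} g_w(wb)\,\mathrm{d}b, \qquad g_w(a) := \lambda_{d-1}\bigl(\overline\Gamma^w(0, 1) \cap \{y \cdot e_d = a\}\bigr).\]
Let $\Psi_w(s)$ denote the central cross-section at axis-coordinate $0$ of $\overline\Gamma^w(0, s)$. Since $T^w(0, wb\,e_d) = |b|$ for $|b| \leq 1$, the triangle inequality gives $\overline\Gamma^w(wb\,e_d, 1-|b|) \subset \overline\Gamma^w(0, 1)$, hence $g_w(wb) \geq \lambda_{d-1}(\overline\Gamma^w(wb\,e_d, 1-|b|) \cap \{y \cdot e_d = wb\})$, and the right-hand side has the same law as $\Psi_w(1-|b|)$ by translation invariance of the augmented process along $e_d$. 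The matching upper bound demands showing that any $y$ in the slice $\{y \cdot e_d = wb\}$ with $T^w(0, y) \leq 1$ also satisfies $T^w(wb\,e_d, y) \leq 1 - |b| + o(1)$ with high probability---equivalently, that the axis route is essentially optimal for crossing the slab $\{0 \leq y \cdot e_d \leq wb\}$---because any off-axis detour would demand sustained speeds of order $w$, which are exponentially rare by Kahn's estimate \eqref{eqkahn} applied near $wb\,e_d$ together with the Hölder control of Proposition \ref{propunifcontrolT}.

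For distinct $b, b' \in (-1, 1)$, the cross-sections $g_w(wb)$ and $g_w(wb')$ depend on the process in neighbourhoods of $wb\,e_d$ and $wb'\,e_d$ separated by Euclidean distance of order $w$, so the mixing property (Proposition \ref{propmixing}) yields asymptotic independence as $w \to \infty$. Each $g_w(wb)$ converges in law to $\Psi_\infty(1-|b|)$, the central cross-section of a $T$-ball of radius $1-|b|$ around a point on an ``infinitely fast'' axis road in $\Pi$; combined with uniform second-moment bounds on $\Psi_w$ coming from \eqref{eqkahn}, a standard variance argument gives
\[\int_{-1}^1 g_w(wb)\,\mathrm{d}b \xrightarrow[w \to \infty]{\mathbb{P}} \int_{-1}^1 \mathbb{E}\bigl[\Psi_\infty(1-|b|)\bigr]\,\mathrm{d}b =: \Theta,\]
the universal constant. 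The main obstacle is the upper bound in the slicing step---controlling the event that the $T^w$-geodesic from $0$ to a point in the slice at $wb$ spends anomalously long off the axis road---which requires a careful adaptation of the arguments of Section \ref{secthm12} to the augmented process.
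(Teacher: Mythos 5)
Your Slivnyak--Mecke reduction and scaling step match the paper's exactly, and the idea of reducing to the regime $w=v/t^{(d-1)/(\gamma-d)}\to\infty$ is the same. The divergence begins with the decomposition of the ball. You slice $\overline\Gamma^w(0,1)$ into cross-sections orthogonal to $e_d$ and then need, for each slice, a matching \emph{upper} bound showing that the axis is essentially an optimal route — which, as you say, is an obstacle, and it is a genuine one: the event ``$T^w(0,y)\le1$'' is not a local functional of $\Pi$ near $wbe_d$ (it involves the whole geodesic from the origin), so the asymptotic independence of the slices $g_w(wb)$ and $g_w(wb')$ does not follow from Proposition \ref{propmixing} in the straightforward way you suggest, and the ``exponentially rare detour'' heuristic would have to be turned into a uniform estimate over all points of the slice, which is not at all a corollary of \eqref{eqkahn}. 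The paper avoids all of this by a different decomposition: on the high-probability event $G_t=(V^0_{vt}<v)$ one has the \emph{exact set identity}
\[\overline{\Gamma_{\ell_d,v}}(0,t)=\bigcup_{|y|\le vt}\overline\Gamma\left(ye_d,\,t-\tfrac{|y|}{v}\right),\]
proved by looking at the last exit point from the axis of the $\Pi_{\ell_d,v}$-geodesic. This single step both settles the upper inclusion that blocks you and --- crucially --- replaces the $T^v$-ball by a union of ordinary $T$-balls centred on the line, converting the problem into one for the unmodified process.

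Your second mechanism, a law of large numbers for the slices via ``convergence in law to $\Psi_\infty(1-|b|)$'' plus second-moment bounds, is not established either: $\Psi_\infty$ (the cross-section for an ``infinitely fast'' road) is not defined, and producing the requisite variance bounds would need roughly the same control as the missing upper bound. The paper instead applies the subadditive ergodic theorem to $\Lambda^t_{a,b}=\lambda\bigl(\bigcup_{a\le y\le b}\overline\Gamma(ye_d,t)\bigr)$, where Proposition \ref{propmixing} is used only to verify the mixing hypothesis (conditions (i)--(iv) in Durrett's statement), yielding $\Lambda^t_{0,u}/u\to\theta(t)$ almost surely and in $L^1$; the varying radius $t-|y|/v$ is then handled by a Riemann-sum/Bonferroni sandwich whose cross terms are shown to be $o(u)$ again by subadditivity, producing $\Theta=\int_{-1}^1\theta(1-|r|)\,\mathrm dr$. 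So while your outline hits the right reductions, it misses the two ideas that make the paper's proof go through: the exact last-exit decomposition on $G_t$, and the use of subadditivity (rather than slice-wise mixing) to obtain the deterministic limit.
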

As we shall see in the proof, the constant $\Theta$ emerges from a subadditivity argument.
\begin{proof}
It suffices to prove that for each $\varepsilon>0$,
\[\mathbb{E}\left[\frac{1}{N^R_{v_0}}\sum_{\substack{(\ell,v)\in\Pi\\\text{$\ell$ hits $\overline{B}(0,R)$ and $v\geq v_0$}}}\fint_{\ell\cap\overline{B}(0,R)}\mathbf{1}\left(\left|\Theta-\frac{\lambda\left(\overline\Gamma(x,t)\right)}{v\cdot t^{s_*}}\right|>\varepsilon\right)\mathrm{d}x~;~N^R_{v_0}>0\right]\underset{t\to0^+}{\longrightarrow}0,\]
where the symbol $\fint_{\ell\cap\overline{B}(0,R)}\mathrm{d}x$ denotes averaging in $x$ over the line segment $\ell\cap\overline{B}(0,R)$.
In the above expectation, bounding $\mathbf{1}\left(N^R_{v_0}>0\right)$ by $N^R_{v_0}$, we find that it suffices to prove that as $t\to0^+$, we have
\[\mathbb{E}\left[\sum_{\substack{(\ell,v)\in\Pi\\\text{$\ell$ hits $\overline{B}(0,R)$ and $v\geq v_0$}}}\fint_{\ell\cap\overline{B}(0,R)}\mathbf{1}\left(\left|\Theta-\frac{\lambda\left(\overline\Gamma(x,t)\right)}{v\cdot t^{s_*}}\right|>\varepsilon\right)\mathrm{d}x\right]\longrightarrow0.\]
Let us write $\varphi(\ell,v;\Pi)$ for
\[\fint_{\ell\cap\overline{B}(0,R)}\mathbf{1}\left(\left|\Theta-\frac{\lambda\left(\overline\Gamma(x,t)\right)}{v\cdot t^{s_*}}\right|>\varepsilon\right)\mathrm{d}x\cdot\mathbf{1}\left(\text{$\ell$ hits $\overline{B}(0,R)$ and $v\geq v_0$}\right).\]
By the Slivnyak--Mecke theorem (see, e.g, \cite[Theorem 4.1]{lastpenrose}), we have
\[\mathbb{E}\left[\sum_{(\ell,v)\in\Pi}\varphi(\ell,v;\Pi)\right]=c\int_{\mathbb{L}_d\times\mathbb{R}_+^*}\varphi\left(\ell,v;\Pi+\delta_{(\ell,v)}\right)\mathrm{d}\left[\mu_d\otimes v^{-\gamma}\mathrm{d}v\right](\ell,v),\]
where $c=\upsilon_{d-1}^{-1}\cdot(\gamma-1)$ is the normalising constant of \eqref{eqnorm}. 
This yields
\begin{multline*}
\mathbb{E}\left[\sum_{\substack{(\ell,v)\in\Pi\\\text{$\ell$ hits $\overline{B}(0,R)$ and $v\geq v_0$}}}\fint_{\ell\cap\overline{B}(0,R)}\mathbf{1}\left(\left|\Theta-\frac{\lambda\left(\overline\Gamma(x,t)\right)}{v\cdot t^{s_*}}\right|>\varepsilon\right)\mathrm{d}x\right]\\
=c\int_{\left[\overline{B}(0,R)\right]}\int_{v_0}^\infty\left\{\fint_{\ell\cap\overline{B}(0,R)}\mathbb{P}\left(\left|\Theta-\frac{\lambda\left(\overline{\Gamma_{\ell,v}}(x,t)\right)}{v\cdot t^{s_*}}\right|>\varepsilon\right)\mathrm{d}x\right\}\frac{\mathrm{d}v}{v^\gamma}\mathrm{d}\mu_d(\ell).
\end{multline*}
The notation $\overline{\Gamma_{\ell,v}}(x,t)$ in the last integral denotes the closed ball of radius $t$ centered at $x$ for the random metric $T_{\ell,v}$ induced by the process ${\Pi_{\ell,v}=\Pi+\delta_{(\ell,v)}}$, i.e, obtained using the road $(\ell,v)$ on top of the roads of $\Pi$.
By the bounded convergence theorem, we see that it suffices to prove the following to complete the proof of the proposition.
\begin{lem}
Fix $(\ell,v)\in\mathbb{L}_d\times\mathbb{R}_+^*$, and $x\in\ell$. 
Setting $\Pi_{\ell,v}=\Pi+\delta_{(\ell,v)}$, consider the random metric ${T_{\ell,v}:\mathbb{R}^d\times\mathbb{R}^d\rightarrow\mathbb{R}_+}$ induced by $\Pi_{\ell,v}$, which is given by $T_{\ell,v}(y,z)=\tau\left(\Pi_{\ell,v};y,z\right)$ for all $y,z\in\mathbb{R}^d$ --- where $\tau$ is the measurable function of \eqref{eqmeasurabilityT} ---, and consider the corresponding balls ${\overline{\Gamma_{\ell,v}}(x,t)=\left\{y\in\mathbb{R}^d:T_{\ell,v}(x,y)\leq t\right\}}$.
There exists a constant $\Theta>0$, not depending on $x$ or $(\ell,v)$, such that
\[\frac{\lambda\left(\overline{\Gamma_{\ell,v}}(x,t)\right)}{v\cdot t^{s_*}}\underset{t\to0^+}{\overset{\mathbb{P}}{\longrightarrow}}\Theta.\]
\end{lem}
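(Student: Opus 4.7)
My plan is to combine invariance and scaling with a subadditive ergodic argument. By the translation and rotation invariance of $\Pi$, I may assume $x = 0$ and $\ell = \ell_d$, which already shows that $\Theta$ cannot depend on $x$ or on the direction of $\ell$. The scaling property of Proposition \ref{propscaling}, applied to the enhanced process $\Pi + \delta_{(\ell_d, v)}$ (under a zoom by $r$, the added road is sent to $(\ell_d, r^{(d-1)/(\gamma-1)}v)$, preserving $\ell_d$), yields the distributional identity
\begin{equation*}
\frac{\lambda\bigl(\overline{\Gamma_{\ell_d, v}}(0, t)\bigr)}{v \cdot t^{s_*}} \overset{\text{\tiny law}}{=} \frac{\lambda\bigl(\overline{\Gamma_{\ell_d, 1}}(0, u)\bigr)}{u^{s_*}}, \qquad u = v^{-(\gamma-d)/(d-1)}\cdot t.
\end{equation*}
Since convergence in distribution to a constant is equivalent to convergence in probability, this reduces the problem to the case $v = 1$ and simultaneously shows that the resulting constant $\Theta$ is the same for every $v$.

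Equivalently, parametrizing by $w = t^{-(d-1)/(\gamma-d)}$ (so $w \to \infty$ as $t \to 0^+$), I aim to prove that $M(w) := \lambda(\overline{\Gamma_{\ell_d, w}}(0, 1))$ satisfies $M(w)/w \to \Theta$ in probability. The key input is a tube approximation
\begin{equation*}
\overline{\Gamma_{\ell_d, w}}(0, 1) \;\approx\; \bigcup_{a \in [-w, w]} \overline{\Gamma}(a e_d, \,1 - |a|/w),
\end{equation*}
in which the balls on the right are now taken in the \emph{unenhanced} metric $T$. The $\supseteq$ inclusion is elementary: use the added road to reach $a e_d$ in time $|a|/w$, then spend the remaining time $1 - |a|/w$ moving via $\Pi$-paths. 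For the reverse, I decompose a $T_{\ell_d, w}$-geodesic from $0$ to $y$ using its last visit to $\ell_d$ at a point $a e_d$: after this point the path uses no part of the added road, so it is a $\Pi$-path, which forces $y \in \overline{\Gamma}(a e_d, 1 - T_{\ell_d, w}(0, a e_d))$. I then need to replace $T_{\ell_d, w}(0, a e_d)$ by $|a|/w$, up to an error that becomes negligible after division by $w$.

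The volume of the right-hand tube is handled by a subadditive ergodic argument. I set $L_r(s) := \lambda(\bigcup_{a \in [0, s]} \overline{\Gamma}(a e_d, r))$; this is subadditive in $s$ and stationary under the translations $\tau_{s e_d}$ of $\Pi$ along $\ell_d$. Integrability of $\mathbb{E}[L_r(1)]$ follows from Theorem \ref{thmquickco}: via the scaling $T(0, y) \overset{\text{\tiny law}}{=} |y|^{(\gamma-d)/(\gamma-1)} T(0, e_d)$, one gets $\mathbb{P}(T(0, y) \leq r+1) \lesssim |y|^{-(d+\gamma-2)}$ at infinity, which is integrable since $d + \gamma - 2 > d$ (recall $\gamma > d \geq 2$). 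The mixing of Proposition \ref{propmixing} provides ergodicity of the translations along $\ell_d$, and Kingman's subadditive ergodic theorem yields $L_r(s)/s \to \Theta_r$ a.s., with $\Theta_r = r^{(\gamma-1)(d-1)/(\gamma-d)}\Theta_1$ by the scaling of $T$. A discretization of $[-w, w]$ into intervals on which $1 - |a|/w$ is essentially constant, combined with the continuous dependence of $\Theta_r$ on $r$, gives the limit $M(w)/w \to \Theta$ in probability with $\Theta = 2\Theta_1/s_*$ (noting $\int_0^1 (1-u)^{(\gamma-1)(d-1)/(\gamma-d)}\,du = 1/s_*$).

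The principal obstacle is the upper bound in the tube approximation: the identification $T_{\ell_d, w}(0, a e_d) \approx |a|/w$ must be controlled uniformly for $|a| \leq w$. The heuristic is that at scales $|a|/w$ around $\ell_d$, the typical ambient speed $V^0_{|a|/w}\sim (|a|/w)^{(d-1)/(\gamma-1)}$ is much smaller than $w$, so ambient roads can provide only negligible shortcuts along $\ell_d$; making this rigorous requires a uniform-in-$a$ probabilistic estimate, likely combined with a bound on the contribution of geodesics that revisit $\ell_d$ multiple times.
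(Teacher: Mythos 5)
Your outline mirrors the paper's proof in structure: reduce to $x=0$, $\ell=\ell_d$ by invariance; represent the enhanced ball as a ``tube'' $\bigcup_a\overline\Gamma(ae_d,t-|a|/v)$ of ordinary $T$-balls along the added road with linearly fading radius; rescale; apply a subadditive ergodic theorem to $L_r(s)$ using the mixing of Proposition~\ref{propmixing} and integrability from Theorem~\ref{thmquickco}; then discretize the fading radius. Your reduction to $v=1$, the change of variable to $M(w)=\lambda(\overline{\Gamma_{\ell_d,w}}(0,1))$, the scaling $\Theta_r=r^{(\gamma-1)(d-1)/(\gamma-d)}\Theta_1$, and the identity $\int_0^1(1-u)^{(\gamma-1)(d-1)/(\gamma-d)}\,\mathrm du=1/s_*$ all check out and are equivalent to what the paper does.

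The genuine gap is the one you flag yourself at the end: the inclusion $\overline{\Gamma_{\ell_d,w}}(0,1)\subseteq\bigcup_{|a|\leq w}\overline\Gamma(ae_d,1-|a|/w)$. You propose to control $T_{\ell_d,w}(0,ae_d)\approx|a|/w$ uniformly in $a$ and to bound contributions from geodesics that re-enter $\ell_d$, but you do not carry this out, and it is harder than necessary. The paper sidesteps the issue entirely by working on the event $G_t=(V^0_{vt}<v)$, whose probability $\exp(-v^{d-\gamma}t^{d-1})$ tends to $1$ as $t\to0^+$, and on which the tube representation holds \emph{exactly}. Indeed, on $G_t$ the added road $(\ell_d,v)$ is strictly the fastest road of $\Pi_{\ell_d,v}$ through $\overline B(0,vt)$, so: (a)~the $\Pi_{\ell_d,v}$-geodesic from $0$ cannot leave $\overline B(0,vt)$ (speed bounded by $v$, time budget $t$); (b)~if $ae_d$ is the last exit point of that geodesic from $\ell_d$ then $|a|\leq vt$ and $T_{\ell_d,v}(0,ae_d)=|a|/v$ exactly --- the Euclidean distance is $|a|$ and the maximal speed is $v$, so no shorter time is possible, while the road itself achieves it --- so the issue of shortcuts simply does not arise; (c)~after the last exit the geodesic segment uses no part of $(\ell_d,v)$, hence is a plain $\Pi$-path, giving $T(ae_d,y)\leq t-|a|/v$. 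This turns your ``$\subseteq$'' into an identity on an event of probability tending to $1$, which is all that a convergence in probability requires, and dispenses with any uniform-in-$a$ estimate. Once you insert this conditioning, the rest of your argument (subadditive limit, and the Bonferroni control of the cross terms that your ``discretization'' implicitly needs) goes through as in the paper.
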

\begin{proof}
For ease of writing, we consider the case where $\ell=\ell_d$, and $x=0$.
Let $t>0$.
On the event $G_t=\left(V^0_{vt}<v\right)$, we have
\[\overline{\Gamma_{\ell_d,v}}(0,t)=\bigcup_{|y|\leq vt}\overline\Gamma(ye_d,t-|y|/v).\]
Indeed, the converse inclusion is always true; and for the direct inclusion, let $z\in\overline{\Gamma_{\ell_d,v}}(0,t)$.
Denote by $ye_d$ the last exit point from $\ell_d$ of the $\Pi_{\ell_d,v}$-geodesic $\xi$ connecting $0$ to $z$, with $y\in\mathbb{R}$.
On $G_t$, the $\Pi_{\ell_d,v}$-geodesic $\xi$ must remain inside the ball $\overline{B}(0,vt)$; in particular, we have $|y|\leq vt$.
Moreover, since on that event $(\ell_d,v)$ is the fastest road of $\Pi_{\ell_d,v}$ passing through $\overline{B}(0,vt)$, the $\Pi_{\ell_d,v}$-geodesic $\xi$ must connect $0$ to $ye_d$ using the road $(\ell_d,v)$: we deduce that $T_{\ell_d,v}(0,ye_d)=|y|/v$.
Finally, since $\xi$ does not use the road $(\ell_d,v)$ between $ye_d$ and $z$, we have $T_{\ell_d,v}(ye_d,z)=T(ye_d,z)$.
We conclude that $T(ye_d,z)=T_{\ell_d,v}(0,z)-|y|/v\leq t-|y|/v$.

Since $\mathbb{P}(G_t)\rightarrow1$ as $t\to0^+$, to prove the Lemma it suffices to show that
\[\frac{\lambda\left(\bigcup_{|y|\leq vt}\overline\Gamma(ye_d,t-|y|/v)\right)}{v\cdot t^{s_*}}\underset{t\to0^+}{\overset{\mathbb{P}}{\longrightarrow}}\Theta.\]
Now, by scaling, we have the equality in distribution
\[\lambda\left(\bigcup_{|y|\leq vt}\overline\Gamma(ye_d,t-|y|/v)\right)\overset{\text{\tiny law}}{=}\lambda\left(\bigcup_{|y|\leq u}\overline\Gamma\left(ye_d,1-|y|/u\right)\right)\cdot t^{s^*},\quad\text{where $u=vt^{-(d-1)/(\gamma-d)}$},\]
hence
\[\frac{\lambda\left(\bigcup_{|y|\leq vt}\overline\Gamma(ye_d,t-|y|/v)\right)}{v\cdot t^{s_*}}\overset{\text{\tiny law}}{=}\frac{\lambda\left(\bigcup_{|y|\leq u}\overline\Gamma\left(ye_d,1-|y|/u\right)\right)}{u}.\]
To conclude, let us show that
\[\frac{\lambda\left(\bigcup_{|y|\leq u}\overline\Gamma(ye_d,1-|y|/u)\right)}{u}\underset{u\to\infty}{\overset{\mathbb{P}}{\longrightarrow}}\Theta.\]

\small\textsc{Exploiting subadditivity\textbf{.}}
\normalsize For each $t>0$, let
\[\Lambda^t_{a,b}=\lambda\left(\bigcup_{a\leq y\leq b}\overline\Gamma(ye_d,t)\right)\quad\text{for all $a\leq b\in\mathbb{R}$.}\]
We claim that the random variables $\left(\Lambda^t_{m,n}\right)_{m\leq n\in\mathbb{N}}$ satisfy the following:
\begin{enumerate}[label=(\roman*)]
\item for each $m\leq n\in\mathbb{N}$, we have $\Lambda^t_{0,n}\leq\Lambda^t_{0,m}+\Lambda^t_{m,n}$,
\item for each $q\in\mathbb{N}^*$, the sequence $\left(\Lambda^t_{qn,q(n+1)}\right)_{n\in\mathbb{N}}$ is stationary and mixing,
\item the distribution of $\left(\Lambda^t_{m,m+n}\right)_{n\in\mathbb{N}}$ does not depend on the integer $m\in\mathbb{N}$,
\item we have $\mathbb{E}\left[\Lambda^t_{0,1}\right]<\infty$, and there exists a constant $c=c(t)>0$ such that $\mathbb{E}\left[\Lambda^t_{0,n}\right]\geq c\cdot n$ for all $n\in\mathbb{N}^*$.
\end{enumerate}
These are the assumptions of the subadditive ergodic theorem as it is stated in \cite[Theorem 6.4.1]{durrett}.
Therefore, by the subadditive ergodic theorem, there exists a constant $\theta=\theta(t)>0$ such that
\begin{equation}\label{eqsubaddet}
\frac{\Lambda^t_{0,u}}{u}\underset{u\to\infty}{\longrightarrow}\theta(t)\quad\text{almost surely and in $L^1$}.
\end{equation}
The subadditivity (i) is clear.
(iii) and the stationarity in (ii) follow from invariance, and the mixing in (ii) follows from Proposition \ref{propmixing}.
It remains to check (iv).
To ease notation, let us take $t=1$, and simply write $\Lambda_{a,b}$ instead of $\Lambda^1_{a,b}$.
We begin with the second part of (iv): we have
\[\begin{split}
\mathbb{E}[\Lambda_{0,n}]&=\int_{\mathbb{R}^d}\mathbb{P}(T(z;[0,n]e_d)\leq1)\mathrm{d}z\\
&\geq\int_{\mathbb{R}^{d-1}\times[0,n]}\mathbb{P}(T((w,z_d);(0,z_d))\leq1)\mathrm{d}(w,z_d)\\
&=\int_0^n\int_{\mathbb{R}^{d-1}}\mathbb{P}(T((w,0);0)\leq1)\mathrm{d}w\mathrm{d}z_d=\int_{\mathbb{R}^{d-1}}\mathbb{P}\left(T(0,e_d)\leq|w|^{-\frac{\gamma-d}{\gamma-1}}\right)\mathrm{d}w\cdot n,
\end{split}\]
where we used invariance and scaling for the last two equalities.
Finally, for the first part of (iv), write
\[\mathbb{E}[\Lambda_{0,1}]=\int_{\mathbb{R}^d}\mathbb{P}(T(z;[0,1]e_d)\leq1)\mathrm{d}z\leq\upsilon_d2^d+\int_{|z|\geq2}\mathbb{P}(T(z;[0,1]e_d)\leq1)\mathrm{d}z.\]
To bound the last term, let $|z|\geq2$.
As for upper bounding the quick connection probability, we argue that if there exists $y\in[0,1]e_d$ such that  $T(z,y)\leq1$, then we must have $V^y_\rho\geq\rho$ and $V^z_\rho\geq\rho$ for all $\rho\in{]0,|y-z|]}$.
By the triangle inequality, it follows that $V^0_{2\rho}\geq V^y_\rho\geq\rho$ and $V^z_{2\rho}\geq V^z_\rho\geq\rho$ for all $\rho\in[1,|z|/2]$.
Thus, setting $\rho_k=|z|/2\cdot2^{-k}$ for all $k\in\mathbb{N}$, and letting $K=\left\lfloor\log_2(|z|/2)\right\rfloor$ be the largest integer $k$ such that $\rho_k\geq1$, we have:
\[\mathbb{P}(T(z;[0,1]e_d)\leq1)\leq\mathbb{P}\left(\text{$V^0_{2\rho_k}\geq\rho_k$ and $V^z_{2\rho_k}\geq\rho_k$ for all $k\in\llbracket0,K\rrbracket$}\right).\]
Using Proposition \ref{propmultiscale} with $\left(r_k=2\rho_k=|z|\cdot2^{-k}\right)_{k\in\mathbb{N}}$ and $(v_k=\rho_k=r_k/2)_{k\in\mathbb{N}}$, we obtain that there exists a constant $C>0$ such that
\[\mathbb{P}(T(z;[0,1]e_d)\leq1)\leq\frac{C}{|z|^{d+\gamma-2}}\quad\text{for all $|z|\geq2$,}\]
which achieves to show that $\mathbb{E}[\Lambda_{0,1}]<\infty$.

\small\textsc{Concluding the proof\textbf{.}}
\normalsize We claim that the following is a consequence of \eqref{eqsubaddet}:
\begin{equation}\label{eqconvprob}
\frac{\lambda\left(\bigcup_{0\leq y\leq u}\overline\Gamma(ye_d,1-y/u)\right)}{u}\underset{u\to\infty}{\overset{\mathbb{P}}{\longrightarrow}}\int_0^1\theta(1-r)\mathrm{d}r.
\end{equation}
To prove this, fix $n\in\mathbb{N}^*$.
For every $u>0$, we have
\[\bigcup_{0\leq y\leq u}\overline\Gamma\left(ye_d,1-\frac{y}{u}\right)\subset\bigcup_{0\leq k<n}\bigcup_{ku/n\leq y\leq(k+1)u/n}\overline\Gamma\left(ye_d,1-\frac{k}{n}\right)=:\bigcup_{0\leq k<n}R^n_k(u)\]
and
\[\bigcup_{0\leq y\leq u}\overline\Gamma\left(ye_d,1-\frac{y}{u}\right)\supset\bigcup_{0\leq k<n}\bigcup_{ku/n\leq y\leq(k+1)u/n}\overline\Gamma\left(ye_d,1-\frac{k+1}{n}\right)=:\bigcup_{0\leq k<n}L^n_k(u).\]
Then, the Bonferroni inequalities yield the bounds
\begin{equation}\label{eqbonferroni}
\sum_{0\leq k<n}\lambda(L^n_k(u))-\sum_{0\leq i<j<n}\lambda\left(L^n_i(u)\cap L^n_j(u)\right)\leq\lambda\left(\bigcup_{0\leq y\leq u}\overline\Gamma\left(ye_d,1-\frac{y}{u}\right)\right)\leq\sum_{0\leq k<n}\lambda\left(R^n_k(u)\right).
\end{equation}
Now, let us first control the cross term $\chi_u:=\sum_{0\leq i<j<n}\lambda\left(L^n_i(u)\cap L^n_j(u)\right)$ in the left hand side: we have
\[\begin{split}
\mathbb{E}\left[\chi_u\right]&\leq\sum_{0\leq i<j<n}\mathbb{E}\left[\lambda\left(\left(\bigcup_{iu/n\leq y\leq(i+1)u/n}\overline\Gamma(ye_d,1)\right)\cap\left(\bigcup_{ju/n\leq y\leq(j+1)u/n}\overline\Gamma(ye_d,1)\right)\right)\right]\\
&\leq\frac{n(n-1)}{2}\cdot\mathbb{E}\left[\lambda\left(\left(\bigcup_{-u\leq y\leq0}\overline\Gamma(ye_d,1)\right)\cap\left(\bigcup_{0\leq y\leq u}\overline\Gamma(ye_d,1)\right)\right)\right],
\end{split}\]
where we used invariance for the last inequality.
By the inclusion-exclusion formula and by invariance, we can write
\[\mathbb{E}\left[\lambda\left(\left(\bigcup_{-u\leq y\leq0}\overline\Gamma(ye_d,1)\right)\cap\left(\bigcup_{0\leq y\leq u}\overline\Gamma(ye_d,1)\right)\right)\right]=\mathbb{E}[\Lambda_{-u,u}]-\mathbb{E}[\Lambda_{-u,0}]-\mathbb{E}[\Lambda_{0,u}]=\mathbb{E}[\Lambda_{0,2u}]-2\mathbb{E}[\Lambda_{0,u}].\]
By subadditivity, we have $\mathbb{E}[\Lambda_{0,u}]/u\rightarrow\theta(1)$ as $u\to\infty$, and it follows that the above is $o(u)$.
In particular, by Markov's inequality, we have $\chi_u/u\overset{\mathbb{P}}{\rightarrow}0$ as $u\to\infty$.
Next, for each $k\in\llbracket0,n\llbracket$, by \eqref{eqsubaddet} and invariance, we have
\[\frac{\lambda(R^n_k(u))}{u}\underset{u\to\infty}{\overset{\mathbb{P}}{\longrightarrow}}\frac{\theta(1-k/n)}{n}\quad\text{and}\quad\frac{\lambda(L^n_k(u))}{u}\underset{u\to\infty}{\overset{\mathbb{P}}{\longrightarrow}}\frac{\theta(1-(k+1)/n)}{n}.\]
Therefore, dividing \eqref{eqbonferroni} by $u$, the right and left hand sides respectively converge, as $u\to\infty$, to
\[\sum_{0\leq k<n}\frac{\theta(1-k/n)}{n}\quad\text{and}\quad\sum_{0\leq k<n}\frac{\theta(1-(k+1)/n)}{n}-0\quad\text{in probability.}\]
Since these quantities can be made arbitrarily close to $\int_0^1\theta(1-r)\mathrm{d}r$ by picking $n$ large enough, we obtain \eqref{eqconvprob}.
By invariance, we get as well
\[\frac{\lambda\left(\bigcup_{-u\leq y\leq0}\overline\Gamma(ye_d,1-|y|/u)\right)}{u}\underset{u\to\infty}{\overset{\mathbb{P}}{\longrightarrow}}\int_0^1\theta(1-r)\mathrm{d}r=\int_{-1}^0\theta(1-|r|)\mathrm{d}r.\]
Using one last time the inclusion-exclusion formula, we conclude that
\[\frac{\lambda\left(\bigcup_{|y|\leq u}\overline\Gamma(ye_d,1-|y|/u)\right)}{u}\underset{u\to\infty}{\overset{\mathbb{P}}{\longrightarrow}}\int_{-1}^0\theta(1-|r|)\mathrm{d}r+\int_0^1\theta(1-r)\mathrm{d}r-0=\int_{-1}^1\theta(1-|r|)\mathrm{d}r=:\Theta,\]
since --- as above --- the cross term is negligible.
\end{proof}
\end{proof}

\subsection{Almost sure results}\label{subsecthmvolboul}

In this subsection, with a different point of view we establish Theorem \ref{thmvolboules}, which states that almost surely the following holds:
\begin{enumerate}
\item\label{thmvbitem1} for $\lambda$-almost every $x\in\mathbb{R}^d$, we have $\lambda\left(\overline\Gamma(x,t)\right)=t^{s^*+o(1)}$ as $t\to0^+$,
\item\label{thmvbitem2} for every $x\in\mathcal{L}$, we have $\lambda\left(\overline\Gamma(x,t)\right)=t^{s_*+o(1)}$ as $t\to0^+$,
\item\label{thmvbitem3} for every $x\in\mathbb{R}^d$, we have $t^{s^*+o(1)}\leq\lambda\left(\overline\Gamma(x,t)\right)\leq t^{s_*-o(1)}$ as $t\to0^+$.
\end{enumerate}
We recall here that $s^*=(\gamma-1)d/(\gamma-d)$, and $s_*=s^*-(d-1)/(\gamma-d)$.
To prove Theorem \ref{thmvolboules}, we derive the upper bound in item \ref{thmvbitem1} (Proposition \ref{propboultypbs}), the lower bound in item \ref{thmvbitem2} (Proposition \ref{propballsroadslb}), and both bounds of item \ref{thmvbitem3} (Proposition \ref{propallballslb} for the lower bound, and Proposition \ref{propallballsub} for the upper bound).
Let us start with the following << upper bound for typical points >>.

\begin{prop}\label{propboultypbs}
Almost surely, for $\lambda$-almost every $x\in\mathbb{R}^d$, there exists a constant $C=C(\omega,x)>0$ such that \[\lambda\left(\overline\Gamma(x,t)\right)\leq  C\cdot t^{s^*}\cdot\ln(1/t)^2\quad\text{for all $t\in{]0,1]}$.}\]
In particular, for $\lambda$-almost every $x\in\mathbb{R}^d$, we have $\lambda\left(\overline\Gamma(x,t)\right)=t^{s^*+o(1)}$ as $t\to0^+$.
\end{prop}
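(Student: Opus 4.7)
The plan is to exploit the scaling property of $T$ through a first-moment computation along a dyadic scale, then swap quantifiers via Fubini. First I will establish the integrability $\mathbb{E}\left[\lambda\left(\overline\Gamma(0,1)\right)\right]<\infty$. By Fubini this expectation equals $\int_{\mathbb{R}^d}\mathbb{P}(T(0,y)\leq 1)\,\mathrm{d}y$; the contribution from $\overline{B}(0,1)$ is bounded trivially, while for $|y|\geq 1$ invariance and scaling (Proposition~\ref{propscaling}) reduce the probability to $\mathbb{P}\bigl(T(0,e_d)\leq|y|^{-(\gamma-d)/(\gamma-1)}\bigr)$, which by Theorem~\ref{thmquickco} decays as $|y|^{-(d+\gamma-2)}$. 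This is integrable at infinity (recall $\gamma>2$, which is ensured by $\gamma>d\geq2$), so the mean volume of the unit $T$-ball is finite, and combining with scaling yields $\mathbb{E}\left[\lambda\left(\overline\Gamma(x,t)\right)\right]=K\cdot t^{s^*}$ for some universal $K$ and all $x\in\mathbb{R}^d$, $t>0$.

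Second, for each fixed $x$ I will control $\lambda\left(\overline\Gamma(x,t)\right)$ along the dyadic sequence $t_n=2^{-n}$ by Markov plus Borel--Cantelli. Since $\mathbb{P}\left(\lambda\left(\overline\Gamma(x,t_n)\right)>t_n^{s^*}\cdot n^2\right)\leq K/n^2$ is summable, almost surely there exists $N=N(\omega,x)$ such that $\lambda\left(\overline\Gamma(x,t_n)\right)\leq t_n^{s^*}\cdot n^2$ for all $n\geq N$. Monotonicity of $t\mapsto\overline\Gamma(x,t)$ then upgrades this dyadic control: for $t\in[t_{n+1},t_n]$ with $n\geq N$, we have $\lambda\left(\overline\Gamma(x,t)\right)\leq\lambda\left(\overline\Gamma(x,t_n)\right)\leq(2t)^{s^*}\bigl(\log_2(1/t)+1\bigr)^2$, and a deterministic constant depending on $N$ absorbs the initial values $n<N$, giving $\lambda\left(\overline\Gamma(x,t)\right)\leq C(\omega,x)\cdot t^{s^*}\cdot\ln(1/t)^2$ for all $t\in{]0,1]}$.

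Finally, the event $A_x=\{\omega:\text{such a constant }C\text{ exists}\}$ thus has full probability for every fixed $x\in\mathbb{R}^d$. Using the joint measurability of $T$ in $(\omega;x,y)$ recorded in \eqref{eqmeasurabilityT}, the map $(\omega,x)\mapsto\mathbf{1}_{A_x}(\omega)$ is jointly measurable, so Fubini yields that $\mathbb{P}$-almost surely, $\lambda$-almost every $x$ lies in $A_x$, which is the claim. The ``in particular'' sentence is immediate since $\ln(1/t)^2=t^{o(1)}$ as $t\to0^+$. The only slightly delicate step is the first one: finiteness of the mean volume hinges on the quick connection probability decaying with exponent strictly greater than $s^*$, which is precisely what Theorem~\ref{thmquickco} provides; everything after that is a routine Markov/Borel--Cantelli/Fubini package.
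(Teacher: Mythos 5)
Your proposal is correct and follows essentially the same route as the paper's proof: compute $\mathbb{E}\left[\lambda\left(\overline\Gamma(0,1)\right)\right]<\infty$ via Fubini and Theorem~\ref{thmquickco} (the decay exponent $d+\gamma-2>d$ since $\gamma>2$), apply Markov plus Borel--Cantelli along $t_n=2^{-n}$ for each fixed $x$, and swap quantifiers by Fubini. The only cosmetic difference is that the paper applies Fubini directly to the indicator of the Borel--Cantelli event rather than introducing the event $A_x$ as you do, and it leaves the dyadic-to-continuous monotonicity upgrade as a ``readily follows'' step which you spell out.
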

Such a statement does not come as a surprise, since the Lebesgue measure is a mass distribution on $\left(\mathbb{R}^d,T\right)$ for which the energy integrals $\left(\iint T(x,y)^{-s}\mathrm{d}y\mathrm{d}x\right)_{0\leq s<s^*}$ are finite (see the discussion in \cite[Chapter 8]{mattila}).
\begin{proof}
Recall the equality in distribution \eqref{eqvolboul}.
We claim that the random variable $\lambda\left(\overline\Gamma(0,1)\right)$ is integrable.
Indeed, we have (by Fubini's theorem and scaling)
\[\mathbb{E}\left[\lambda\left(\overline\Gamma(0,1)\right)\right]=\int_{\mathbb{R}^d}\mathbb{P}(T(0,y)\leq1)\mathrm{d}y=\int_{\mathbb{R}^d}\mathbb{P}\left(T(0,e_d)\leq|y|^{-\frac{\gamma-d}{\gamma-1}}\right)\mathrm{d}y,\]
and by Theorem \ref{thmquickco}, there exists a constant $C>0$ such that
\[\mathbb{P}\left(T(0,e_d)\leq|y|^{-\frac{\gamma-d}{\gamma-1}}\right)\leq\frac{C}{|y|^{d+\gamma-2}}\quad\text{for all $|y|\geq1$.}\]
Now, let $x\in\mathbb{R}^d$, and set $t_n=2^{-n}$ for all $n\in\mathbb{N}$.
By \eqref{eqvolboul} and Markov's inequality, we have
\[\mathbb{P}\left(\lambda\left(\overline\Gamma(x,t_n)\right)\geq t_n^{s^*}\cdot n^2\right)=\mathbb{P}\left(\lambda\left(\overline\Gamma(0,1)\right)\geq n^2\right)\leq\frac{\mathbb{E}\left[\lambda\left(\overline\Gamma(0,1)\right)\right]}{n^2}\quad\text{for all $n\in\mathbb{N}^*$.}\]
Therefore, by the Borel-Cantelli lemma and by Fubini's theorem, we obtain
\begin{eqnarray*}
\lefteqn{\mathbb{E}\left[\lambda\left\{x\in\mathbb{R}^d:\text{$\lambda\left(\overline\Gamma(x,t_n)\right)\geq t_n^{s^*}\cdot n^2$ for infinitely many $n$}\right\}\right]}\\
&=&\int_{\mathbb{R}^d}\mathbb{P}\left(\text{$\lambda\left(\overline\Gamma(x,t_n)\right)\geq t_n^{s^*}\cdot n^2$ for infinitely many $n$}\right)\mathrm{d}x\\
&=&0.
\end{eqnarray*}
Therefore, almost surely, for $\lambda$-almost every $x\in\mathbb{R}^d$, there exists a constant $C=C(\omega,x)>0$ such that
\[\lambda\left(\overline\Gamma(x,t_n)\right)\leq C\cdot t_n^{s^*}\cdot n^2\quad\text{for all $n\in\mathbb{N}^*$.}\]
The result of the proposition readily follows.
\end{proof}

In the other direction, we have the following << lower bound for all points >>, as a simple consequence of Proposition \ref{propunifcontrolT}.
\begin{prop}\label{propallballslb}
Almost surely, for every $R>0$, there exists a positive function $r$ (depending on $\omega$ and $R$), defined over some interval $]0,t_0[$ and satisfying $r(t)=t^{(\gamma-1)/(\gamma-d)+o(1)}$ as $t\to0^+$, such that for every $t\in{]0,t_0[}$, we have 
\[\overline\Gamma(x,t)\supset\overline{B}(x,r(t))\quad\text{for all $x\in\overline{B}(0,R)$.}\]
In particular, for every $x\in\mathbb{R}^d$, we have $\lambda\left(\overline\Gamma(x,t)\right)\geq t^{s^*+o(1)}$ as $t\to0^+$.
\end{prop}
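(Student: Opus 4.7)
The plan is to invert the Hölder-type bound of Proposition \ref{propunifcontrolT} in order to produce, for a given scale $t$, an Euclidean radius $r(t)$ such that any Euclidean ball of radius $r(t)$ centered in $\overline{B}(0,R)$ is contained in the corresponding $T$-ball. Once such an inclusion is obtained, the volume lower bound for general $x$ follows by taking $R=|x|+1$ and applying $\lambda\left(\overline\Gamma(x,t)\right)\geq\upsilon_d\cdot r(t)^d$.

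First, I would apply Proposition \ref{propunifcontrolT} to the radius $R+1$ (this anticipates the fact that if $x\in\overline{B}(0,R)$ and $|y-x|\leq1$, then $y\in\overline{B}(0,R+1)$). This yields an almost sure constant $C=C(\omega,R+1)>0$ such that
\[T(x,y)\leq C\cdot|x-y|^{\frac{\gamma-d}{\gamma-1}}\cdot\ln\left(\frac{4(R+1)}{|x-y|}\right)^{\frac{1}{\gamma-1}}\quad\text{for all $x\neq y\in\overline{B}(0,R+1)$.}\]
I would then define $r(t)=c\cdot t^{(\gamma-1)/(\gamma-d)}\cdot\ln(1/t)^{-1/(\gamma-d)}$ for a suitable constant $c=c(\omega,R)>0$, on an interval $]0,t_0[$ small enough to guarantee both $r(t)<1$ and $r(t)<4(R+1)$.

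The core calculation is to substitute $|x-y|=\rho$ with $\rho\leq r(t)$ into the right-hand side above and check that the result is bounded by $t$. Setting $\alpha=(\gamma-d)/(\gamma-1)$, one has $r(t)^\alpha=c^\alpha\cdot t\cdot\ln(1/t)^{-\alpha/(\gamma-d)}=c^\alpha\cdot t\cdot\ln(1/t)^{-1/(\gamma-1)}$, while $\ln(4(R+1)/\rho)\leq\ln(4(R+1)/r(t))=O(\ln(1/t))$ as $t\to0^+$. The logarithmic factors cancel in the product, leaving $C\cdot\rho^\alpha\cdot\ln(4(R+1)/\rho)^{1/(\gamma-1)}\leq C'\cdot c^\alpha\cdot t$, which is at most $t$ provided $c$ is small enough. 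Monotonicity of the bounding function $\phi:\rho\mapsto\rho^\alpha\ln(4(R+1)/\rho)^{1/(\gamma-1)}$ on a neighbourhood of $0$ (as in the end of the proof of Proposition \ref{propunifcontrolT}) justifies passing from $\rho=r(t)$ to all $\rho\leq r(t)$. This establishes $\overline{B}(x,r(t))\subset\overline\Gamma(x,t)$ uniformly over $x\in\overline{B}(0,R)$.

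For the final assertion, given $x\in\mathbb{R}^d$, I would apply the first part with $R=|x|+1$: for $t$ small enough one has $\overline{B}(x,r(t))\subset\overline{B}(0,R)$, hence $\lambda\left(\overline\Gamma(x,t)\right)\geq\upsilon_d\cdot r(t)^d=\upsilon_d c^d\cdot t^{(\gamma-1)d/(\gamma-d)}\cdot\ln(1/t)^{-d/(\gamma-d)}=t^{s^*+o(1)}$, since the polylogarithmic factor contributes only $t^{o(1)}$. There is no real obstacle here beyond tracking constants: the proposition is essentially a reformulation of Proposition \ref{propunifcontrolT}, and the only mild care needed is to apply the Hölder bound on a slightly larger ball than $\overline{B}(0,R)$ to accommodate points $y$ near the boundary.
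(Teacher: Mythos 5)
Your proof is correct and takes essentially the same route as the paper: both invert the uniform Hölder-type bound of Proposition \ref{propunifcontrolT} to produce $r(t)$ with $C\cdot\phi(r(t))\leq t$, using monotonicity of $\phi$ near $0$, and the only cosmetic difference is that the paper defines $r(t)$ as the exact inverse $\psi(t/C)$ whereas you write down an explicit asymptotic formula $r(t)=c\,t^{(\gamma-1)/(\gamma-d)}\ln(1/t)^{-1/(\gamma-d)}$ and check it directly.
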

\begin{proof}
By Proposition \ref{propunifcontrolT}, almost surely, for every $R>0$, there exists a constant $C=C(\omega,R)>0$ such that
\[T(x,y)\leq C\cdot|x-y|^\frac{\gamma-d}{\gamma-1}\cdot\ln\left(\frac{4R}{|x-y|}\right)^\frac{1}{\gamma-1}\quad\text{for all $x\neq y\in\overline{B}(0,R)$.}\]
Now, consider the function $\phi:r\in{]0,2R]}\mapsto r^{(\gamma-d)/(\gamma-1)}\cdot\ln(4R/r)^{1/(\gamma-1)}$.
A straightforward analysis shows that $\phi$ is strictly increasing over the interval $]0,r_0[$, where $r_0=4Re^{1/(\gamma-d)}$; and that if we denote by $\psi:{]0,\phi(r_0)[}\rightarrow{]0,r_0[}$ its inverse, then we have $\psi(t)=t^{(\gamma-1)/(\gamma-d)+o(1)}$ as $t\to0^+$.
Setting $r(t)=\psi(t/C)$, so that $C\cdot\phi(r(t))=t$, for $t$ small enough so as to have $r(t)\leq R/2$, we get $T^*_{x,r(t)}\leq t$ for every $x\in\overline{B}(0,R/2)$, i.e. $\overline\Gamma(x,t)\supset\overline{B}(x,r(t))$.
This yields the first assertion; and the second follows by taking the Lebesgue measure.
\end{proof}

The previous << lower bound for all points >> effortlessly yields the following << lower bound for points on roads >>, with the argument of Figure \ref{figballsroads}.

\begin{prop}\label{propballsroadslb}
Almost surely, for every $x\in\mathcal{L}$, we have $\lambda\left(\overline\Gamma(x,t)\right)\geq t^{s_*+o(1)}$ as $t\to0^+$.
\end{prop}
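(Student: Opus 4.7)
The plan is to make rigorous the heuristic depicted in Figure \ref{figballsroads}. Fix $x \in \mathcal{L}$ and choose any road $(\ell, v) \in \Pi$ passing through $x$ (such a road exists by definition of $\mathcal{L}$). The first step is the following inclusion: for every $y \in \ell$ with $|x - y| \leq vt/2$, the road itself yields a $\Pi$-path from $x$ to $y$ in time at most $|x-y|/v \leq t/2$, so $T(x,y) \leq t/2$, and hence by the triangle inequality any point within $T$-distance $t/2$ from $y$ lies within $T$-distance $t$ from $x$. Taking the union over such $y$, we obtain the "tube" containment
\[\overline\Gamma(x, t) \;\supset\; \bigcup_{y \in \ell,\, |x-y| \leq vt/2} \overline\Gamma(y, t/2).\]

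The second step is to convert each small $T$-ball in the union into an honest Euclidean ball via Proposition \ref{propallballslb}. Pick any $R$ with $\overline{B}(x,v) \subset \overline{B}(0,R)$: on an event of full probability, there is a function $r$ with $r(s) = s^{(\gamma-1)/(\gamma-d) + o(1)}$ as $s \to 0^+$ such that $\overline\Gamma(y,s) \supset \overline{B}(y, r(s))$ for all $y \in \overline{B}(0,R)$ and all sufficiently small $s$. For $t$ small enough, every $y$ appearing above lies in $\overline{B}(0,R)$, so $\overline\Gamma(x,t)$ contains the full tubular neighbourhood of radius $r(t/2)$ around the segment $\{y\in\ell : |x-y|\leq vt/2\}$.

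The third step is a direct volume computation. The tube above is at least as large as its cylindrical core, of length $vt$ and $(d-1)$-dimensional cross-section of radius $r(t/2)$, so
\[\lambda\!\left(\overline\Gamma(x,t)\right) \;\geq\; \upsilon_{d-1}\, v\, t \cdot r(t/2)^{d-1} \;=\; v\, \upsilon_{d-1}\cdot t^{\,1 + (d-1)(\gamma-1)/(\gamma-d) + o(1)}.\]
A short algebraic check (bringing to common denominator $\gamma - d$) shows that $1 + (d-1)(\gamma-1)/(\gamma-d) = (d\gamma - 2d + 1)/(\gamma - d) = s_*$. Since $v > 0$ is a fixed positive constant (depending on the chosen road through $x$), the multiplicative prefactor $v\,\upsilon_{d-1}$ is absorbed into the $o(1)$ in the exponent, giving $\lambda(\overline\Gamma(x,t)) \geq t^{s_* + o(1)}$ as $t \to 0^+$.

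I do not anticipate a serious obstacle: the proof is essentially an immediate combination of the trivial tube inclusion with Proposition \ref{propallballslb}. The one point requiring care is that the Euclidean lower bound $r(t/2)$ must be valid \emph{uniformly} in $y$ on the segment of length $vt$ around $x$; this is precisely why Proposition \ref{propallballslb} was stated in uniform form on $\overline{B}(0,R)$, and it is what allows the conclusion to hold simultaneously for every $x \in \mathcal{L}$ on a single event of full probability.
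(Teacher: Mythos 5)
Your proposal is correct and follows essentially the same route as the paper: the tube inclusion $\overline\Gamma(x,t)\supset\bigcup_{y\in\ell,\,|x-y|\leq vt/2}\overline\Gamma(y,t/2)$, then Proposition \ref{propallballslb} to replace each $\overline\Gamma(y,t/2)$ by the Euclidean ball $\overline{B}(y,r(t/2))$ uniformly in $y$, and a volume estimate of order $t\cdot r(t/2)^{d-1}=t^{s_*+o(1)}$. The only difference is cosmetic: you bound the volume of the capsule by that of its cylindrical core $\upsilon_{d-1}\,vt\cdot r(t/2)^{d-1}$, whereas the paper places $\asymp vt/r(t/2)$ disjoint Euclidean balls of radius $r(t/2)$ along the segment; both give the same order.
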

\begin{proof}
Fix a realisation of $\Pi$.
Let $(\ell,v)$ be a road of $\Pi$, and let $x\in\ell$.
Let $R>|x|$: by Proposition \ref{propallballslb}, there exists a positive function $r$, defined over some interval $]0,t_0[$ and satisfying $r(t)=t^{(\gamma-1)/(\gamma-d)+o(1)}$ as $t\to0^+$, such that for every $t\in{]0,t_0[}$, we have $\overline\Gamma(x,t)\supset\overline{B}(x,r(t))$ for all $x\in\overline{B}(0,R)$.
As explained in Figure \ref{figballsroads}, we have
\[\overline\Gamma(x,t)\supset\bigcup_{y\in\ell:|x-y|\leq vt/2}\overline\Gamma\left(y,\frac{t}{2}\right),\]
thus
\[\overline\Gamma(x,t)\supset\bigcup_{y\in\ell:|x-y|\leq vt/2}\overline{B}\left(y,r\left(\frac{t}{2}\right)\right)\quad\text{for all sufficiently small $t$.}\]
Placing points $(y_i)_{i\in\mathbb{Z}}$ to be $2r(t/2)$-apart on $\ell$ so that the balls $\left(\overline{B}(y_i,r(t/2));~i\in\mathbb{Z}\right)$ have disjoint interiors, we obtain
\[\lambda\left(\overline\Gamma(x,t)\right)\geq\#\left\{i\in\mathbb{Z}:|x-y_i|\leq\frac{vt}{2}\right\}\cdot\upsilon_dr\left(\frac{t}{2}\right)^d\geq\left(\frac{vt}{2r(t/2)}-1\right)\cdot\upsilon_dr\left(\frac{t}{2}\right)^d\quad\text{for all sufficiently small $t$.}\]
This lower bound is of order $t\cdot r(t/2)^{d-1}=t^{s_*+o(1)}$ as $t\to0^+$, concluding the proof.
\end{proof}

Finally, to complete the proof of Theorem \ref{thmvolboules}, it remains to prove the following << upper bound for all points >>.

\begin{prop}\label{propallballsub}
Almost surely, for every $x\in\mathbb{R}^d$, we have $\lambda\left(\overline\Gamma(x,t)\right)\leq t^{s_*-o(1)}$ as $t\to0^+$.
\end{prop}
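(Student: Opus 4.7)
The plan is to establish the bound first pointwise at a countable dense set of rational points via a first-moment/Borel--Cantelli argument, and then to extend it to every $x\in\mathbb{R}^d$ using Proposition \ref{propallballslb} as a bridge.

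For the pointwise part, the invariance-and-scaling identity $\lambda(\overline{\Gamma}(x,t))\overset{\text{\tiny law}}{=}\lambda(\overline{\Gamma}(0,1))\cdot t^{s^*}$ together with the integrability $\mathbb{E}[\lambda(\overline{\Gamma}(0,1))]<\infty$ established in the proof of Proposition \ref{propboultypbs} will yield, via Markov's inequality at $t_n=2^{-n}$ and rational $\varepsilon>0$,
\[
\mathbb{P}\bigl(\lambda(\overline{\Gamma}(x,t_n))>t_n^{s_*-\varepsilon}\bigr)\leq C\,t_n^{s^*-s_*+\varepsilon}=C\,t_n^{(d-1)/(\gamma-d)+\varepsilon},
\]
which is summable in $n$. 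Borel--Cantelli together with a countable intersection over $(x,\varepsilon)\in\mathbb{Q}^d\times\mathbb{Q}_{>0}$ then produces a common event of full measure on which, for every rational $x$ and rational $\varepsilon>0$, the bound $\lambda(\overline{\Gamma}(x,t_n))\leq t_n^{s_*-\varepsilon}$ holds past some threshold $n_0(x,\varepsilon)$.

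To upgrade this to every $x\in\mathbb{R}^d$, I would fix rational $\varepsilon'<\varepsilon$ and exploit Proposition \ref{propallballslb}: the inclusion $\overline{B}(x,r(t_n))\subset\overline{\Gamma}(x,t_n)$, with $r(t_n)=t_n^{(\gamma-1)/(\gamma-d)+o(1)}$, combined with the symmetry of $T$, implies $T(x,x')\leq t_n$ for every $x'\in\overline{B}(x,r(t_n))$, hence $\overline{\Gamma}(x,t_n)\subset\overline{\Gamma}(x',t_{n-1})$. If a rational $x'_n$ inside this shrinking neighborhood satisfies the previous step's bound at scale $n-1$ with parameter $\varepsilon'$, then $\lambda(\overline{\Gamma}(x,t_n))\leq(2t_n)^{s_*-\varepsilon'}\leq t_n^{s_*-\varepsilon}$ for $n$ large enough (since $2^{s_*-\varepsilon'}\leq t_n^{-(\varepsilon-\varepsilon')}$ eventually), and the passage from the dyadic sequence $t_n$ to all small $t>0$ will be routine by monotonicity of $t\mapsto\overline{\Gamma}(x,t)$.

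The hard part is guaranteeing uniformly in $x$ that such a good rational $x'_n$ inside $\overline{B}(x,r(t_n))$ indeed exists for all sufficiently large $n$, rather than just at almost every $x$. The random ``bad set'' $B_n^{\varepsilon'}:=\{y:\lambda(\overline{\Gamma}(y,t_{n-1}))>t_{n-1}^{s_*-\varepsilon'}\}$ has Lebesgue measure tending to zero on any compact by Fubini and the first-moment estimate above, but the natural scale $r(t_n)^d\asymp t_n^{s^*+o(1)}$ is too fine for a pure Markov argument. My plan to bridge this gap is to sharpen the per-point tail estimate along the lines of the argument behind \eqref{eqproofbs}: whenever $\overline{\Gamma}(y,s)$ has atypically large volume $M$, its connectedness forces $V^y_\rho\geq\rho/s$ on a dyadic range of scales $\rho\in[s^{(\gamma-1)/(\gamma-d)},M^{1/d}]$, so that item \ref{lemkahn1} of Lemma \ref{lemkahn} applied via Proposition \ref{propmultiscale} delivers a quantitatively stronger bound on $\mathbb{P}(y\in B_n^{\varepsilon'})$. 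Combining this with Fubini and a further Borel--Cantelli over $n$ should control the density of $B_n^{\varepsilon'}$ inside $\overline{B}(x,r(t_n))$ uniformly in $x$, thereby producing good rational $x'_n$ in every such neighborhood and closing the argument.
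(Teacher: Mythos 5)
Your plan correctly identifies that the pointwise Borel--Cantelli over rational $x$ is easy, and you honestly flag the real difficulty: upgrading to a bound uniform in $x\in\mathbb{R}^d$. But the bridge you sketch does not close the gap, and the quantitative estimates you gesture at are not strong enough.

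The core problem is dimensional. To find a good rational $x'_n$ inside $\overline{B}(x,r(t_n))$, you must rule out, simultaneously for all $x$, that the random bad set $B^{\varepsilon'}_n$ contains a full ball of radius $r(t_n)\asymp t_n^{s^*+o(1)}$, which after discretisation requires beating a union bound over $\asymp t_n^{-s^*}$ grid points. Your sharpened one-point estimate forces $V^y_\rho\geq\rho/s$ on dyadic $\rho\in[s^{(\gamma-1)/(\gamma-d)},M^{1/d}]$; plugging these into item \ref{lemkahn1} of Lemma \ref{lemkahn} via Proposition \ref{propmultiscale} produces a tail bound of order $s^{(\gamma-1)(d-1)/(d(\gamma-d))+O(\varepsilon')}$ for $M=s^{s_*-\varepsilon'}$. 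Since $(\gamma-1)(d-1)/(d(\gamma-d))<s^*=(\gamma-1)d/(\gamma-d)$, this is nowhere near enough to pay for the $t_n^{-s^*}$ entropy of the grid. One would need a substantially stronger input --- essentially control on the event that a whole ball of radius $r(t_n)$ is simultaneously bad, which is not a single-point event and is not delivered by the multiscale lemma in the form you invoke. Your phrasing (``should control the density'') is exactly where the proof stops being a proof.

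For comparison, the paper sidesteps this entirely. Instead of a pointwise tail estimate plus a bridge, it proves a \emph{deterministic} bootstrap (Lemma \ref{lemdet}): assuming explicit configuration bounds --- a global cap on $V^0_{2R}$ and, at a geometric cascade of scales $R^{\alpha_i}$, a uniform cap $N_i$ on the number of roads of $\Pi_{R^{\alpha_{i+1}}}$ hitting any ball of radius $O(R^{\alpha_i})$ --- one covers $\overline\Gamma(x,1)$ by a tree of Euclidean balls whose cardinality and final radius give the desired volume bound \emph{uniformly} in $x\in\overline B(0,R)$. Each configuration hypothesis is then shown to hold with superpolynomially high probability via a covering argument and a Poisson tail ($\mathbb{P}(\mathrm{Poisson}(\lambda)>N)\leq\lambda^{N+1}/(N+1)!$), and Borel--Cantelli closes the argument along the dyadic sequence $R_n=2^n$. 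The uniformity in $x$ is thus built into the deterministic lemma rather than extracted from a pointwise estimate, which is precisely what your approach is missing.
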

\begin{proof}
For positive real numbers $R$ and $t$, consider the random variable $\Lambda^R_t=\sup_{x\in\overline{B}(0,R)}\lambda\left(\overline\Gamma(x,t)\right)$\footnote{In all rigour, work with the actual $\Pi$-measurable random variable $\overline\Lambda^R_t=\sup_{x\in\overline{B}(0,R)\cap\mathbb{Q}}\lambda\left(\overline\Gamma(x,2t)\right)$, which upper bounds $\Lambda^R_t$ by virtue of Fatou's lemma.}.
To prove the proposition, we will show that almost surely, for each $s\in[0,s_*[$, we have $\Lambda^R_t\leq t^s$ for all sufficiently small $t$.

Let $s\in[0,s_*[$.
By scaling, we have the equality in distribution
\[\Lambda^R_t\overset{\text{\tiny law}}{=}\Lambda^{R'}_{1}\cdot t^{s^*},\quad\text{where $R'=t^{-\frac{\gamma-1}{\gamma-d}}\cdot R$.}\]
Therefore,
\[\mathbb{P}\left(\Lambda^R_t>t^s\right)=\mathbb{P}\left(\Lambda^{R'}_1>(R/R')^{(\gamma-d)(s-s^*)/(\gamma-1)}\right).\]
Since we have $(\gamma-d)(s-s^*)/(\gamma-1)<-(d-1)/(\gamma-1)$, there exists $\varepsilon>0$ such that
\[\mathbb{P}\left(\Lambda^{R'}_1>(R/R')^{(\gamma-d)(s-s^*)/(\gamma-1)}\right)\leq\mathbb{P}\left(\Lambda^{R'}_1>R'^{\frac{d-1}{\gamma-1}+\varepsilon}\right)\quad\text{for all sufficiently large $R$}.\]
Now, suppose we show that
\begin{equation}\label{eqproofpropallbub}
\sum_{n\geq0}\mathbb{P}\left(\Lambda^{R_n}_1>R_n^{\frac{d-1}{\gamma-1}+\varepsilon}\right)<\infty,\quad\text{where $R_n=2^n$ for all $n\in\mathbb{N}$.}
\end{equation}
Then, going back up the previous reasoning, we get 
\[\sum_{n\geq0}\mathbb{P}\left(\Lambda^R_{t_n}>t_n^s\right)<\infty,\quad\text{with $t_n=(R\cdot2^{-n})^\frac{\gamma-d}{\gamma-1}$ for all $n\in\mathbb{N}$,}\]
and by the Borel-Cantelli lemma we conclude that almost surely, we have $\Lambda^R_{t_n}\leq t_n^s$ for all sufficiently large $n$.
The result of the proposition readily follows.

In view of the above considerations, let us fix a realisation of $\Pi$, and consider the following deterministic lemma.
\begin{lem}\label{lemdet}
Let $\varepsilon\in{]0,(\gamma-d)/(\gamma-1)[}$, set $\alpha_i=(1-\varepsilon)^i\cdot((d-1)/(\gamma-1)+\varepsilon)$ for all $i\in\mathbb{N}$, and let $j\in\mathbb{N}^*$ be large enough so that $\alpha_jd\leq\varepsilon$.
Assume that there exists $N_0,\ldots,N_{j-1}\in\mathbb{N}$ such that for every $R\geq1$, the following holds, for each $i\in\llbracket-1,j\llbracket$:
\begin{enumerate}
\item[$(i=-1)$] $V^0_{2R}\leq R^{(d-1)/(\gamma-1)+\varepsilon}$,
\item[$(i=0)$] for every $x\in\overline{B}(0,R)$, there are at most $N_0$ roads of $\Pi_{R^{\alpha_1}}$ that pass through the ball $\overline{B}(x,R^{\alpha_0})$,
\item[$(1\leq i<j)$] for every $x\in\overline{B}(0,R+R^{\alpha_0}+\ldots+R^{\alpha_{i-1}})$, there are at most $N_i$ roads of $\Pi_{R^{\alpha_{i+1}}}$ that pass through the ball $\overline{B}(x,2R^{\alpha_i})$.
\end{enumerate}
Then, there exists a constant $C>0$ such that
\[\Lambda^R_1\leq C\cdot R^{\frac{d-1}{\gamma-1}+2\varepsilon}\quad\text{for every $R\geq1$}.\]
\end{lem}
To prove \eqref{eqproofpropallbub}, it will then remain to show that the assumptions of the lemma are satisfied with very high probability.
\begin{proof}
Let $R\geq1$.
The idea is that the information of items $(i,~i\in\llbracket-1,j\llbracket)$ bootstraps into the right upper bound for $\Lambda^R_1$.

Fix $x\in\overline{B}(0,R)$: we wish to upper bound $\lambda\left(\overline\Gamma(x,1)\right)$.
\begin{enumerate}
\item[$(i=-1)$] First, as $V^0_{2R}\leq R^{(d-1)/(\gamma-1)+\varepsilon}\leq R$, the $T$-ball $\overline\Gamma(x,1)$ must be included in $\overline{B}(0,2R)$.
Then, since the speed of $\Pi$-paths inside $\overline{B}(0,2R)$ cannot exceed $R^{(d-1)/(\gamma-1)+\varepsilon}=R^{\alpha_0}$, actually $\overline\Gamma(x,1)$ is included in $\overline{B}(x,R^{\alpha_0})$.

\item[$(i=0)$] By assumption, there are at most $N_0$ roads of $\Pi_{R^{\alpha_1}}$ that pass through the ball $\overline{B}(x,R^{\alpha_0})$: let us denote by $\mathcal{L}_\varnothing$ the corresponding set of lines.
Note that it could be the case that no road of $\Pi_{R^{\alpha_1}}$ passes through $x$: to allow for the following argument, let us fix an arbitrary road $\left(\ell_0,R^{\alpha_0}\right)$ that passes through $x$, and add $\ell_0$ to $\mathcal{L}_\varnothing$.
Now, for each line $\ell\in\mathcal{L}_\varnothing$, we place points to be $2R^{\alpha_1}$ apart on $\ell\cap\overline{B}(x,R^{\alpha_0})$, so that the Euclidean balls with radius $R^{\alpha_1}$ around those points cover the line segment.
Let us denote \emph{all} these points by $x_1,\ldots,x_{\kappa(\varnothing)}$: we have 
\[\kappa(\varnothing)\leq\#\mathcal{L}_\varnothing\cdot\left(\frac{R^{\alpha_0}}{R^{\alpha_1}}+1\right)\leq(N_0+1)\cdot2R^{\varepsilon\alpha_0},\]
and the balls $\left(\overline{B}(x_k,2R^{\alpha_1});~k\in\llbracket1,\kappa(\varnothing)\rrbracket\right)$ cover $\overline\Gamma(x,1)$.
Indeed, given $y\in\overline\Gamma(x,1)$, denote by $z_0$ the last exit point from $\bigcup_{\ell\in\mathcal{L}_\varnothing}\ell$ of the $\Pi$-geodesic $\xi$ that connects $x$ to $y$ (this $z_0$ would be ill-defined if we had not added $\ell_0$ to $\mathcal{L}_\varnothing$).
As an element of $\bigcup_{\ell\in\mathcal{L}_\varnothing}\ell\cap\overline{B}(x,R^{\alpha_0})$, the point $z_0$ lies in the ball $\overline{B}(x_{k_0},R^{\alpha_1})$ for some $k_0\in\llbracket1,\kappa(\varnothing)\rrbracket$.
Since the speed of $\xi$ cannot exceed $R^{\alpha_1}$ between $z_0$ and $y$, the point $y$ must lie in ${\overline{B}(z_0,R^{\alpha_1})\subset\overline{B}(x_{k_0},2R^{\alpha_1})}$.

\item[$(i=1)$] For each $u\in\llbracket1,\kappa(\varnothing)\rrbracket$, we proceed as follows.
As $x_u\in\overline{B}(0,R+R^{\alpha_0})$, by assumption there are at most $N_1$ roads with speed limit greater than or equal to $R^{\alpha_2}$ that pass through the ball $\overline{B}(x_u,2R^{\alpha_1})$: let us denote by $\mathcal{L}_u$ the corresponding set of lines.
For each line $\ell\in\mathcal{L}_u$, we place points to be $2R^{\alpha_2}$ apart on $\ell\cap\overline{B}(x_u,2R^{\alpha_1})$, so that the Euclidean balls with radius $R^{\alpha_2}$ around those points cover the line segment.
Let us denote \emph{all} these points by $x_{u1},\ldots,x_{u\kappa(u)}$ (where $uk$ is to be understood as the concatenation of the word $u$ with the letter $k$).
We have 
\[\kappa(u)\leq\#\mathcal{L}_u\cdot\left(\frac{2R^{\alpha_1}}{R^{\alpha_2}}+1\right)\leq N_1\cdot3R^{\varepsilon\alpha_1},\]
and the balls $\left(\overline{B}\left(x_{uk},2R^{\alpha_2}\right);~u\in\llbracket1,\kappa(\varnothing)\rrbracket,k\in\llbracket1,\kappa(u)\rrbracket\right)$   cover $\overline\Gamma(x,1)$.
Indeed, let $y\in\overline\Gamma(x,1)$.
We know that $z_0$, the last exit point from the set $\bigcup_{\ell\in\mathcal{L}_\varnothing}\ell$ of the $\Pi$-geodesic $\xi$ connecting $x$ to $y$, lies in the ball $\overline{B}(x_u,R^{\alpha_1})$ for some $u\in\llbracket1,\kappa(\varnothing)\rrbracket$.
Moreover, by the definition of $z_0$, the speed of $\xi$ cannot exceed $R^{\alpha_1}$ between $z_0$ and $y$.
Then, denote by $z_1$ the last exit point from $\bigcup_{\ell\in\mathcal{L}_u}\ell$ of the $\Pi$-geodesic segment between $z_0$ and $y$.
As an element of $\bigcup_{\ell\in\mathcal{L}_u}\ell\cap\overline{B}(x_u,2R^{\alpha_1})$, the point $z_1$ lies in the ball $\overline{B}(x_{uk},R^{\alpha_2})$ for some $k\in\llbracket1,\kappa(u)\rrbracket$.
Finally, since the speed of $\xi$ cannot exceed $R^{\alpha_2}$ between $z_1$ and $y$, the point $y$ must lie in ${\overline{B}(z_1,R^{\alpha_2})\subset\overline{B}(x_{uk},2R^{\alpha_2})}$.

\item[$\ldots$] Continuing like this by induction, we construct a plane tree $\mathbb{T}$ in which the root $\varnothing$ has $\kappa(\varnothing)$ children $1,\ldots,\kappa(\varnothing)$, each $u\in\llbracket1,\kappa(\varnothing)\rrbracket$ has $\kappa(u)$ children $u1,\ldots,u\kappa(u)$, etc.
By construction, for each $i\in\llbracket1,j\rrbracket$, the balls $\left(B(x_u,2R^{\alpha_i});~u\in\mathbb{T}:|u|=i\right)$ cover $\overline\Gamma(x,1)$.
\end{enumerate}
Since the balls $\left(B(x_u,2R^{\alpha_j});~u\in\mathbb{T}:|u|=j\right)$ cover $\overline\Gamma(x,1)$, we have
\[\begin{split}
\lambda\left(\overline\Gamma(x,1)\right)&\leq\#\{u\in\mathbb{T}:|u|=j\}\cdot\upsilon_d(2R^{\alpha_j})^d\\
&\leq(N_0+1)\cdot2R^{\varepsilon\alpha_0}\cdot N_1\cdot3R^{\varepsilon\alpha_1}\cdot\ldots\cdot N_{j-1}\cdot3R^{\varepsilon\alpha_{j-1}}\cdot\upsilon_d(2R^{\alpha_j})^d\\
&=:C\cdot R^{\varepsilon(\alpha_0+\ldots+\alpha_{j-1})+\alpha_jd}.
\end{split}\]
This bound is uniform in $x\in\overline{B}(0,R)$, and the constant $C$ does not depend on $R$.
The result of the lemma follows, since
\[\varepsilon(\alpha_0+\ldots+\alpha_{j-1})+\alpha_jd\leq\varepsilon\sum_{i\geq0}(1-\varepsilon)^i\cdot\left(\frac{d-1}{\gamma-1}+\varepsilon\right)+\varepsilon=\frac{d-1}{\gamma-1}+2\varepsilon.\]
\end{proof}
Let $R_n=2^n$ for all $n\in\mathbb{N}$.
Note that we may assume without loss of generality that $\varepsilon$ in \eqref{eqproofpropallbub} is less than ${(\gamma-d)/(\gamma-1)}$.
To prove \eqref{eqproofpropallbub}, in view of the previous lemma, it remains to find integers $N_0,\ldots,N_{j-1}$ for which the assumptions of the lemma are satisfied, on events whose complements have summable probability.
For each $i\in\llbracket0,j\llbracket$, fix $N_i\in\mathbb{N}$ large enough so that $(1-\alpha_i)d-\alpha_i((1-\varepsilon)(\gamma-1)-(d-1))(N_i+1)<0$.
We show below that there exists constants $C_i,c_i>0$ such that item $(i)$ of Lemma \ref{lemdet} holds with probability at least $1-C_i2^{-c_in}$.
This will complete the proof of the proposition.
\begin{enumerate}
\item[$(i=-1)$] Using the inequality $\mathbb{P}(\mathrm{Poisson}(\lambda)>0)\leq\lambda$, we get
\[\mathbb{P}\left(V^0_{2R_n}>R_n^{\frac{d-1}{\gamma-1}+\varepsilon}\right)\leq(2R_n)^{d-1}\cdot R_n^{-((d-1)+\varepsilon(\gamma-1))}=2^{d-1}\cdot2^{-\varepsilon(\gamma-1)n}=:C_{-1}\cdot2^{-c_{-1}\cdot n}\quad\text{for all $n\in\mathbb{N}$.}\]
\item[$(i=0)$] For each $n\in\mathbb{N}$, denote by $\left(\overline{B}(x,R_n^{\alpha_0});~x\in\mathcal{X}_0\right)$ a covering of $\overline{B}(0,R_n)$ by balls of radius $R_n^{\alpha_0}$, with centres ${x\in\overline{B}(0,R_n)}$ more than $R_n^{\alpha_0}$ apart from each other.
We have $\#\mathcal{X}_0\leq\left(2R_n\cdot R_n^{-\alpha_0}+1\right)^d\leq\left(3R_n^{1-\alpha_0}\right)^d$.
Now, suppose that there exists an $x\in\overline{B}(0,R_n)$ for which more than $N_0$ roads of $\Pi_{R_n^{\alpha_1}}$ pass through the ball $\overline{B}(x,R_n^{\alpha_0})$.
The point $x$ lies in the ball $\overline{B}(x',R_n^{\alpha_0})$ for some $x'\in\mathcal{X}_0$, and the Poisson random variable $\Pi\left(\left[\overline{B}(x',2R_n^{\alpha_0})\right]\times[R_n^{\alpha_1},\infty[\right)$ then exceeds $N_0$.
By a union bound we get, using the inequality ${\mathbb{P}\left(\mathrm{Poisson}(\lambda)>N_0\right)\leq\frac{\lambda^{N_0+1}}{(N_0+1)!}}$:
\[\begin{split}
\mathbb{P}\left(\exists x'\in\mathcal{X}_0:\Pi\left(\left[\overline{B}(x',2R_n^{\alpha_0})\right]\times[R_n^{\alpha_1},\infty[\right)>N_0\right)&\leq\left(3R_n^{1-\alpha_0}\right)^d\cdot\frac{\left[(2R_n^{\alpha_0})^{d-1}\cdot R_n^{-\alpha_1(\gamma-1)}\right]^{N_0+1}}{(N_0+1)!}\\
&=:C_0\cdot R_n^{(1-\alpha_0)d+(\alpha_0(d-1)-\alpha_1(\gamma-1))(N_0+1)}.
\end{split}\]
The constant $C_0$ does not depend on $n$, and by the definition of $N_0$, the exponent
\[(1-\alpha_0)d+(\alpha_0(d-1)-\alpha_1(\gamma-1))(N_0+1)=(1-\alpha_0)d-\alpha_0((1-\varepsilon)(\gamma-1)-(d-1))(N_0+1)=:-c_0\]
is negative.
\item[$(1\leq i<j)$] For each $n\in\mathbb{N}$, denote by $\left(\overline{B}(x,R_n^{\alpha_i});~x\in\mathcal{X}_i\right)$ a covering of $\overline{B}\left(0,R_n+R_n^{\alpha_0}+\ldots+R_n^{\alpha_{i-1}}\right)$ by balls of radius $R_n^{\alpha_i}$, with centres ${x\in\overline{B}\left(0,R_n+R_n^{\alpha_0}+\ldots+R_n^{\alpha_{i-1}}\right)}$ more than $R_n^{\alpha_i}$ apart from each other.
We have 
\[\#\mathcal{X}_i\leq\left(2\cdot\frac{R_n+R_n^{\alpha_0}+\ldots+R_n^{\alpha_{i-1}}}{R_n^{\alpha_i}}+1\right)^d\leq\left((2i+3)R_n^{1-\alpha_i}\right)^d.\]
Now, suppose that there exists an $x\in\overline{B}\left(0,R_n+R_n^{\alpha_0}+\ldots+R_n^{\alpha_{i-1}}\right)$ for which more than $N_i$ roads of $\Pi_{R_n^{\alpha_{i+1}}}$ pass through the ball $\overline{B}(x,2R_n^{\alpha_i})$.
The point $x$ lies in the ball $\overline{B}(x',R_n^{\alpha_i})$ for some $x'\in\mathcal{X}_i$, and the Poisson random variable $\Pi\left(\left[\overline{B}(x',3R_n^{\alpha_i})\right]\times\left[R_n^{\alpha_{i+1}},\infty\right[\right)$ then exceeds $N_i$.
By a union bound we get, using the inequality $\mathbb{P}\left(\mathrm{Poisson}(\lambda)>N_i\right)\leq\frac{\lambda^{N_i+1}}{(N_i+1)!}$:
\[\begin{split}
\mathbb{P}\left(\exists x'\in\mathcal{X}_i:\Pi\left(\left[\overline{B}(x,3R_n^{\alpha_i})\right]\times\left[R_n^{\alpha_{i+1}},\infty\right[\right)>N_i\right)&\leq\left((2i+3)R_n^{1-\alpha_i}\right)^d\cdot\frac{\left[(3R_n^{\alpha_i})^{d-1}\cdot R_n^{-\alpha_{i+1}(\gamma-1)}\right]^{N_i+1}}{(N_i+1)!}\\
&=:C_i\cdot R_n^{(1-\alpha_i)d+(\alpha_i(d-1)-\alpha_{i+1}(\gamma-1))(N_i+1)}.
\end{split}\]
The constant $C_i$ does not depend on $n$, and by the definition of $N_i$, the exponent
\[(1-\alpha_i)d+(\alpha_i(d-1)-\alpha_{i+1}(\gamma-1))(N_i+1)=(1-\alpha_i)d-\alpha_i((1-\varepsilon)(\gamma-1)-(d-1))(N_i+1)=:-c_i\]
is negative.
\end{enumerate}
\end{proof}

\section{Discussion}\label{secdisc}

To conclude this paper, we mention and discuss some natural problems our results leave open.

\paragraph{Gauge function and exact Hausdorff measure.}
In our analysis, we used the Lebesgue measure as mass distribution on $\left(\mathbb{R}^d,T\right)$.
\small\textsc{Question\textbf{:}} \normalsize is the Lebesgue measure a Hausdorff measure for some gauge function?

\paragraph{Complete description of the multifractal spectrum.}
The results of Theorem \ref{thmvolboules} are short of describing completely the local behaviour around all points in the measured metric space $\left(\mathbb{R}^d,T,\lambda\right)$.
\small\textsc{Question\textbf{:}} \normalsize can one describe the behaviour of $\lambda\left(\overline\Gamma(x,t)\right)$ as $t\to0^+$ for all $x\in\mathbb{R}^d$?

In the following, we make a modest first step towards that by considering the simpler question: << are there points $x$ other than points on roads for which $\lambda\left(\overline\Gamma(x,t)\right)=t^{s+o(1)}$ as $t\to0^+$, with an exponent $s<s^*$? >>.
Note that if we could answer no to the latter, then with Theorem \ref{thmvolboules} we would have the complete description mentioned above.

\begin{rem}
One of our original motivations with this question was to recover the roads of the process $\Pi$, given the measured metric space $\left(\mathbb{R}^d,T,\lambda\right)$.
Because of Proposition \ref{propult} below, it turns out that points on roads cannot in general be identified as the only points having untypically big $T$-balls around them.
However, the road network may be recovered from the metric space $\left(\mathbb{R}^d,T\right)$ through the speed-limit function $V$, as follows: almost surely, for every $x\in\mathbb{R}^d$, we have
\[\varliminf_{\substack{y\to x\\y\neq x}}\frac{T(x,y)}{|x-y|}=\frac{1}{V(x)}\in[0,\infty].\]
\end{rem}

In the right direction for a negative answer to the last question, we have the following.
\begin{prop}
Almost surely, the set
\[\bigcup_{s<s^*}\left\{x\in\mathbb{R}^d:\text{$\lambda\left(\overline\Gamma(x,t)\right)\geq t^s$ for all sufficiently small $t$}\right\}\]
has Euclidean Hausdorff dimension $1$.
\end{prop}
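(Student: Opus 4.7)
Let $A_s := \{x \in \mathbb{R}^d : \lambda(\overline\Gamma(x,t)) \geq t^s \text{ for all sufficiently small } t\}$; since $s \mapsto A_s$ is monotone nondecreasing, the set in question equals $\bigcup_{s \in \mathbb{Q}, s < s^*} A_s$. The lower bound $\dim_H \geq 1$ is immediate from Proposition \ref{propballsroadslb}: any $s \in (s_*, s^*) \cap \mathbb{Q}$ gives $\mathcal{L} \subset A_s$, and $\mathcal{L}$ is a countable union of lines, hence has Euclidean Hausdorff dimension $1$. For the upper bound it suffices, by countable stability of $\dim_H$, to show $\dim_H A_s \leq 1$ almost surely for each fixed rational $s < s^*$.

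The plan is to reduce membership in $A_s$ to a multiscale condition on the speed-limit function $V$, to which Proposition \ref{propmultiscale} directly applies. If $x \in A_s$, setting $R(x,t) := \sup\{|x-y| : y \in \overline\Gamma(x,t)\}$, the inclusion $\overline\Gamma(x,t) \subset \overline B(x, R(x,t))$ and the bound $\lambda(\overline\Gamma(x,t)) \leq \upsilon_d R(x,t)^d$ forces $R(x,t) \geq c_0 t^{s/d}$ for all small $t$. Picking a witness $y \in \overline\Gamma(x,t)$ with $|x-y| \geq c_0 t^{s/d}/2$ and running the Figure \ref{figkahn} argument yields $V^x_r \geq r/t$ for every $r \leq |x-y|$; optimizing by taking $r = c_0 t^{s/d}/2$ and eliminating $t$ gives
\[
V^x_r \geq c_1 r^{1 - d/s} \quad \text{for all } r \leq r_0(x),
\]
with $c_1 = c_1(s,d) > 0$ and $r_0(x) > 0$. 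The inequality $1 - d/s < (d-1)/(\gamma-1)$ (equivalent to $s < s^*$) makes this atypical for $V^x_r$. Putting $r_k := 2^{-k}$, $v_k := c_1 r_k^{1-d/s}$ and $A_s^n := \{x : V^x_{r_k} \geq v_k$ for all $k \geq n\}$, one has $A_s \subset \bigcup_{n \in \mathbb{N}} A_s^n$.

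Fix $n, R \in \mathbb{N}$ and, for each $K \geq n$, cover $\overline B(0,R)$ by Euclidean balls $\overline B(z_i, r_K)$ on an $r_K$-grid, with at most $O(R^d r_K^{-d})$ centres. If $\overline B(z_i, r_K) \cap A_s^n \neq \emptyset$, then by the triangle inequality $V^{z_i}_{r_{k-1}} \geq V^{z_i}_{r_k + r_K} \geq v_k$ for all $k \in \llbracket n, K \rrbracket$, and Proposition \ref{propmultiscale}(1) (applied after the trivial index-shift $j = k-1$) yields
\[
\mathbb{P}\bigl(\overline B(z_i, r_K) \cap A_s^n \neq \emptyset\bigr) \leq C_n \cdot r_K^{d-1},
\]
where boundedness of the exponential factor uniformly in $K$ follows from the geometric summability of $\sum_k r_{k-1}^{d-1} v_k^{-(\gamma-1)} \asymp \sum_k 2^{k[(\gamma-d) - d(\gamma-1)/s]}$, whose exponent is negative iff $s < s^*$. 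If $N_K$ denotes the number of bad balls, then $\mathbb{E}[N_K] \leq C'_n R^d r_K^{-1}$, and for any $\beta > 1$,
\[
\sum_{K \geq n} \mathbb{E}\bigl[N_K \cdot r_K^\beta\bigr] \leq C'_n R^d \sum_{K \geq n} r_K^{\beta-1} < \infty,
\]
so by Borel--Cantelli, $N_K r_K^\beta \to 0$ almost surely. Since this dominates, up to constants, the Hausdorff $\beta$-premeasure of $A_s^n \cap \overline B(0,R)$ at scale $r_K$, we conclude $\mathcal{H}^\beta(A_s^n \cap \overline B(0,R)) = 0$ a.s., hence $\dim_H(A_s^n \cap \overline B(0,R)) \leq \beta$ a.s. Letting $\beta \downarrow 1$ along rationals and taking countable unions over $n$, $R$, and $s$ finishes the upper bound. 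The only substantive obstacle is ensuring summability of the exponential factor in the multiscale estimate, which is precisely what the hypothesis $s < s^*$ guarantees --- $s^*$ being exactly the critical exponent at which this sum turns divergent; the rest of the argument is bookkeeping about the factor-$2$ inflation of radii when passing from $V^x$ at an arbitrary point to $V^{z_i}$ at a grid centre.
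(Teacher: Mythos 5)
Your proof is correct and follows essentially the same route as the paper's: both reduce membership in the set to a multiscale lower bound on the speed-limit function $V^x_r$, cover $\overline{B}(0,R)$ by balls of dyadic radii, and control the probability of each ball being relevant via Proposition \ref{propmultiscale}(1). The paper's version differs only cosmetically, picking an auxiliary $s'\in{]s,s^*[}$ so as to obtain a clean power law $V^x_r\geq r^\alpha$, and concluding via Fatou's lemma that the liminf of the $1$-dimensional Hausdorff premeasure is a.s.\ finite, rather than via your Borel--Cantelli argument giving $\mathcal{H}^\beta=0$ for every $\beta>1$.
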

\begin{proof}
By Theorem \ref{thmvolboules}, this set contains $\mathcal{L}$, and thus has Euclidean Hausdorff dimension at least $1$.
Now consider the upper bound.
It suffices to prove that for each $s<s^*$ and every $R>0$, almost surely the set 
\[\left\{x\in\overline{B}(0,R):\text{$\lambda\left(\overline\Gamma(x,t)\right)\geq t^s$ for all sufficiently small $t$}\right\}\]
has Euclidean Hausdorff dimension at most $1$ (note that by Theorem \ref{thmvolboules}, this set is empty for $s<s_*$).
The idea is that, if $T$-balls with small radius around $x\in\mathbb{R}^d$ are untypically big, then there must be untypically fast roads passing near $x$ at all scales, and we can control the probability of this happening with the multiscale lemma.
More precisely now, let $x\in\mathbb{R}^d$, and suppose that there exists an $s\in[s_*,s^*[$ such that $\lambda\left(\overline\Gamma(x,t)\right)\geq t^s$ for all sufficiently small $t$.
Fixing $s'\in{]s,s^*[}$, set $r(t)=t^{s'/d}$, so that $\upsilon_dr(t)^d<t^s$ for all sufficiently small $t$.
For those $t$, the $T$-ball $\overline\Gamma(x,t)$ cannot be contained in $\overline{B}(x,r(t))$, hence we must have $V^x_{r(t)}\geq r(t)/t=r(t)^\alpha$, where $\alpha=1-d/s'\in{]0,(d-1)/(\gamma-1)[}$.
To conclude, it suffices to prove that for each $\alpha\in{]0,(d-1)/(\gamma-1)[}$, almost surely, the Euclidean Hausdorff dimension of the set 
\[\left\{x\in\overline{B}(0,R):\text{$V^x_r\geq r^\alpha$ for all sufficiently small $r$}\right\}\]
is at most $1$.
For each $k\in\mathbb{N}$, set $\rho_k=2^{-k}$, and let $\left(\overline{B}(x,\rho_k);~x\in\mathcal{X}_k\right)$ be a covering of $\overline{B}(0,R)$ by balls of radius $\rho_k$, with centres $x\in\overline{B}(0,R)$ more than $\rho_k$ apart from each other.
In particular, we have ${\#\mathcal{X}_k\leq(2R/\rho_k+1)^d}$.
Fix $n\in\mathbb{N}$.
For each $K\geq n$, the balls $\left(\overline{B}(x,\rho_K);~x\in\mathcal{X}_K:\text{$V^x_{2\rho_k}\geq\rho_k^\alpha$ for all $k\in\llbracket n,K\rrbracket$}\right)$ cover the set 
\[\left\{x\in\overline{B}(0,R):\text{$V^x_r\geq r^\alpha$ for all $r\in{]0,\rho_n]}$}\right\}.\]
Indeed, each element $x$ of that set lies in the ball $\overline{B}(x',\rho_K)$ for some $x'\in\mathcal{X}_K$, and for every $k\in\llbracket n,K\rrbracket$ we have $V^{x'}_{2\rho_k}\geq V^x_{\rho_k}\geq\rho_k^\alpha$.
Therefore, the $1$-dimensional Euclidean Hausdorff measure of the above set is upper bounded by
\[\varliminf_{K\to\infty}\#\left\{x\in\mathcal{X}_K:\text{$V^x_{2\rho_k}\geq\rho_k^\alpha$ for all $k\in\llbracket n,K\rrbracket$}\right\}\cdot2\rho_K=:H.\]
We wish to see that $H$ is almost surely finite.
By Fatou's lemma, we have
\[\mathbb{E}[H]\leq\varliminf_{K\to\infty}\mathbb{E}\left[\#\left\{x\in\mathcal{X}_K:\text{$V^x_{2\rho_k}\geq\rho_k^\alpha$ for all $k\in\llbracket n,K\rrbracket$}\right\}\cdot2\rho_K\right],\]
with
\[\mathbb{E}\left[\#\left\{x\in\mathcal{X}_K:\text{$V^x_{2\rho_k}\geq\rho_k^\alpha$ for all $k\in\llbracket n,K\rrbracket$}\right\}\cdot2\rho_K\right]=\#\mathcal{X}_K\cdot\mathbb{P}\left(\text{$V^0_{2\rho_k}\geq\rho_k^\alpha$ for all $k\in\llbracket n,K\rrbracket$}\right)\cdot2\rho_K\]
for each $K\geq n$.
Using Proposition \ref{propmultiscale} with ${(r_k=2\rho_k)_{k\in\mathbb{N}}}$ and ${(v_k=\rho_k^\alpha)_{k\in\mathbb{N}}}$, we obtain
\[\begin{split}
\mathbb{P}\left(\text{$V^0_{2\rho_k}\geq\rho_k^\alpha$ for all $k\in\llbracket n,K\rrbracket$}\right)&\leq(2\rho_K)^{d-1}\cdot\rho_n^{-\alpha(\gamma-1)}\cdot\exp\left[\sum_{k=n+1}^K(2\rho_{k-1})^{d-1}\cdot\rho_k^{-\alpha(\gamma-1)}\right]\\
&\leq(2\rho_K)^{d-1}\cdot\rho_n^{-\alpha(\gamma-1)}\cdot\exp\left[4^{d-1}\cdot\sum_{k\geq0}\rho_k^{(d-1)-\alpha(\gamma-1)}\right].
\end{split}\]
As $(d-1)-\alpha(\gamma-1)>0$, we have $\sum_{k\geq0}\rho_k^{(d-1)-\alpha(\gamma-1)}<\infty$, and we get 
\[\mathbb{P}\left(\text{$V^0_{2\rho_k}\geq\rho_k^\alpha$ for all $k\in\llbracket n,K\rrbracket$}\right)\leq C\cdot\rho_K^{d-1},\]
where the constant $C$ does not depend on $K$.
Thus, we obtain
\[\#\mathcal{X}_K\cdot\mathbb{P}\left(\text{$V^0_{2\rho_k}\geq\rho_k^\alpha$ for all $k\in\llbracket n,K\rrbracket$}\right)\cdot2\rho_K\leq(2R/\rho_K+1)^d\cdot C\cdot\rho_K^{d-1}\cdot2\rho_K\leq(2R+\rho_K)^d\cdot C\cdot2,\]
and it follows that
\[\mathbb{E}[H]\leq\varliminf_{K\to\infty}\#\mathcal{X}_K\cdot\mathbb{P}\left(\text{$V^x_{2\rho_k}\geq\rho_k^\alpha$ for all $k\in\llbracket n,K\rrbracket$}\right)\cdot2\rho_K\leq(2R)^d\cdot C\cdot2<\infty.\]
\end{proof}

The result of the previous proposition does not imply though that the set
\[\bigcup_{s<s^*}\left\{x\in\mathbb{R}^d:\text{$\lambda\left(\overline\Gamma(x,t)\right)\geq t^s$ for all sufficiently small $t$}\right\}\]
is reduced to $\mathcal{L}$.
In fact it is not, at least in the planar case.

\begin{prop}\label{propult}
Almost surely, there exists $x\in\left.\mathbb{R}^2\middle\backslash\mathcal{L}\right.$ such that $\lambda\left(\overline\Gamma(x,t)\right)\geq t^{s_*+o(1)}$ as $t\to0^+$.
\end{prop}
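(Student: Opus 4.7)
The plan is to reduce to constructing a single point $x \in \mathbb{R}^2 \setminus \mathcal{L}$ whose local speed $V^x_r$ decays only sub-polynomially: $V^x_r = r^{o(1)}$ as $r \to 0^+$. Given such an $x$, fix $\varepsilon > 0$; for $r$ small enough we have $V^x_r \geq r^\varepsilon$, so some road $(\ell, v) \in \Pi$ with $v \geq r^\varepsilon$ intersects $\overline{B}(x, r)$. Picking $y$ in this intersection, the Hölder bound of Proposition \ref{propunifcontrolT} gives $T(x,y) \lesssim r^{(\gamma-d)/(\gamma-1)} \log^{1/(\gamma-1)}(4R/r)$; setting $t \simeq r^{(\gamma-d)/(\gamma-1)}$ so that $T(x, y) \leq t/2$, the uniform-in-$y$ form of Proposition \ref{propballsroadslb} yields $\lambda(\overline\Gamma(x, t)) \geq \lambda(\overline\Gamma(y, t/2)) \geq c \cdot v \cdot t^{s_* + o(1)} \geq c \cdot r^\varepsilon \cdot t^{s_* + o(1)}$. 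Since $r \sim t^{(\gamma-1)/(\gamma-d)}$ and $\varepsilon$ is arbitrary, the factor $r^\varepsilon = t^{O(\varepsilon)}$ is absorbed into the $o(1)$ exponent, giving $\lambda(\overline\Gamma(x, t)) \geq t^{s_* + o(1)}$.

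To construct such an $x$, I would proceed inductively. Fix geometric scales $\rho_n = 2^{-n}$ and a sub-polynomial sequence $v_n \downarrow 0$ (e.g.\ $v_n = 1/\log(n+2)$). Starting from a road $(\ell_0, v_0) \in \Pi$ with $v_0 \geq 1$ and a point $x_0 \in \ell_0 \cap \overline{B}(0, 1/2)$, at step $n + 1$ find a road $(\ell_{n+1}, v_{n+1}) \in \Pi$ with $v_{n+1} \geq v_n$ whose line passes within $\rho_{n+1}$ of $x_n$, and pick $x_{n+1} \in \ell_{n+1}$ within $\rho_{n+1}$ of $x_n$ but at positive distance from each of $\ell_0, \ldots, \ell_n$ and from the first $n$ roads in a fixed enumeration of $\Pi$. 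The limit $x = \lim x_n$ then lies off every road of $\Pi$ (by the diagonalization), hence $x \notin \mathcal{L}$, while by construction $V^x_{\rho_n} \geq v_n$, which by monotonicity of $V^x_r$ in $r$ gives the required sub-polynomial decay.

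The main obstacle is to show that this construction actually succeeds with positive probability (which, combined with the mixing property of Proposition \ref{propmixing}, would upgrade to almost-sure existence). The difficulty is that at scale $\rho_n$, the expected number of roads of $\Pi$ of sub-polynomial speed $v_n$ passing through a fixed ball of radius $\rho_n$ is of order $\rho_n^{d-1} v_n^{-(\gamma-1)} \to 0$, so the candidate-road at each scale is a rare event and naively multiplying success probabilities gives zero. The plan for overcoming this is to allow, at each step, a whole family of candidate continuations $(x_n, \ell_n)$ --- in the spirit of a multi-type branching process on nested sub-balls --- and to prove via a second-moment computation that the tree of successful continuations survives forever with positive probability. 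The planar setting ($d = 2$) should enter essentially at this stage, through the abundance of pairwise intersections of random lines (every two lines of $\Pi$ meet in $d=2$), which provides enough extra degrees of freedom to keep the candidate tree super-critical; this is consistent with the fact that the analogous statement in $d \geq 3$ is left open in Section \ref{secdisc}.
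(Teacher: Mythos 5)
Your reduction in the first paragraph is fine and matches the paper: construct an $x\notin\mathcal L$ with $V^x_r$ decaying sub-polynomially (the paper forces $V^x_r\geq\ln(1/r)^{-1}$), then use Proposition~\ref{propallballslb} to cover a line segment of length $\sim vt$ by disjoint typical balls. (The paper re-derives the line-segment bound directly from Proposition~\ref{propallballslb} rather than invoking Proposition~\ref{propballsroadslb}, which in the form stated applies only to $x\in\mathcal L$; but this is a cosmetic difference.)

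The genuine gap is in your construction of $x$, and you have in fact put your finger on it yourself: with a \emph{fixed} geometric sequence of scales $\rho_n=2^{-n}$ and a prescribed sub-polynomial speed threshold $v_n$, the expected number of qualifying roads near $x_n$ at scale $\rho_{n+1}$ is $\rho_{n+1}^{d-1}v_n^{-(\gamma-1)}\to 0$, so naively the construction dies. Your proposed fix is a branching/second-moment survival argument, but this is not what is needed, and it is far from clear it closes the gap (the relevant events at different scales are strongly positively correlated through the same fast road, which wrecks a naive second-moment computation, and the ``every two planar lines meet'' observation does not obviously supply the needed independence). The idea you are missing is to reverse the quantifiers: do not fix the scales and hunt for a road of prescribed speed; instead let the roads you actually find dictate the scales. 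Concretely, enumerate the roads $(\ell_n,v_n)$ through $\overline{B}(0,1)$ by decreasing speed, build nested rectangles $R_1\supset R_2\supset\cdots$ along segments $s_k\subset\ell_{n_k}$, and choose the half-width $\delta_k$ of $R_k$ after you know the speed $v_{n_{k+1}}$ of the \emph{next} road that happens to hit $s_k$, taking $\delta_k\leq\exp\left[-v_{n_{k+1}}^{-1}\right]$. The sole probabilistic input needed is that in $d=2$ a.s.\ some later road meets each segment $s_k$ (the set of lines hitting a planar segment has positive $\mu_2$-measure; this is where $d=2$ enters, and it is what fails for $d\geq 3$), together with property~$(*)$ to avoid triple intersections so that $s_{k+1}$ can dodge the previously used lines. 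Compactness of the nested rectangles then produces $x=\bigcap_k R_k\notin\mathcal L$ with $d(x,\ell_{n_k})\leq\exp[-v_{n_{k+1}}^{-1}]$, and reading off $r\in[\exp(-v_{n_{k+1}}^{-1}),\exp(-v_{n_k}^{-1})]$ gives $V^x_r\geq v_{n_k}\geq\ln(1/r)^{-1}$. No second-moment estimate, no survival-of-a-branching-process argument, and no extra mixing step are required.
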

\begin{proof}
It suffices to prove that there exists $x\in\left.\overline{B}(0,1)\middle\backslash\mathcal{L}\right.$ such that ${V^x_r\geq\ln(1/r)^{-1}}$ for all sufficiently small $r$.
Indeed, suppose this has been done.
By Proposition \ref{propallballslb}, there exists a positive function $r$, defined over some interval $]0,t_0[$ and satisfying $r(t)=t^{(\gamma-1)/(\gamma-d)+o(1)}$ as $t\to0^+$, such that for every $t\in{]0,t_0[}$, we have 
\[\overline\Gamma(y,t)\supset\overline{B}(y,r(t))\quad\text{for all $y\in\overline{B}(0,2)$.}\]
For all sufficiently small $t$, we have both $\Gamma(x,t/2)\supset\overline{B}(0,r(t/2))$ and $V^x_{r(t/2)}\geq\ln(1/r(t/2))^{-1}$.
In particular, the ball $\overline{B}(x,r(t/2))$ is traversed by a road $(\ell,v)\in\Pi$ with speed limit $v\geq\ln(1/r(t/2))^{-1}$.
Denoting by $x'\in\overline{B}(x,r(t/2))$ the orthogonal projection of $x$ onto the line $\ell$, we have $T(x,x')\leq t/2$, and thus
\[\overline\Gamma(x,t)\supset\overline\Gamma(x',t/2)\supset\bigcup_{y\in\ell:|x'-y|\leq vt/2}\overline\Gamma\left(y,\frac{t}{2}\right)\supset\bigcup_{y\in\ell:|x'-y|\leq vt/2}\overline{B}\left(y,r\left(\frac{t}{2}\right)\right)\quad\text{for all sufficiently small $t$.}\]
Placing points $(y_i)_{i\in\mathbb{Z}}$ to be $2r(t/2)$-apart on $\ell$ so that the balls $\left(\overline{B}(y_i,r(t/2));~i\in\mathbb{Z}\right)$ have disjoint interiors, we get
\[\lambda\left(\bigcup_{y\in\ell:|x'-y|\leq vt/2}\overline{B}\left(y,r\left(\frac{t}{2}\right)\right)\right)\geq\#\left\{i\in\mathbb{Z}:|x'-y_i|\leq\frac{vt}{2}\right\}\cdot\upsilon_dr\left(\frac{t}{2}\right)^d\geq\left(\frac{vt}{2r(t/2)}-1\right)\cdot\upsilon_dr\left(\frac{t}{2}\right)^d.\]
We thus obtain
\[\lambda\left(\overline\Gamma(x,t)\right)\geq\left(\frac{\ln(1/r(t/2))^{-1}\cdot t}{2r(t/2)}-1\right)\cdot\upsilon_dr\left(\frac{t}{2}\right)^d\quad\text{for all sufficiently small $t$,}\]
which achieves to show that $\lambda\left(\overline\Gamma(x,t)\right)\geq t^{s_*+o(1)}$ as $t\to0^+$.

Now, we turn to the proof of the claim.
The idea, to obtain a point $x\in\left.\overline{B}(0,1)\middle\backslash\mathcal{L}\right.$ such that ${V^x_r\geq\ln(1/r)^{-1}}$ for all sufficiently small $r$, is to construct it using a compactness argument, forcing it to be close enough to fast roads, yet away from them (we are indebted to Jonas Kahn for the birth of this idea).
Denote by $(\ell_1,v_1);(\ell_2,v_2);\ldots$ the roads of $\Pi$ passing through the ball $\overline{B}(0,1)$, with $v_1>v_2>\ldots$.
The following steps are represented in Figure \ref{figsegmemb}.

\begin{figure}[ht]
\centering
\begin{tabular}{cc}
\includegraphics[width=0.4\linewidth]{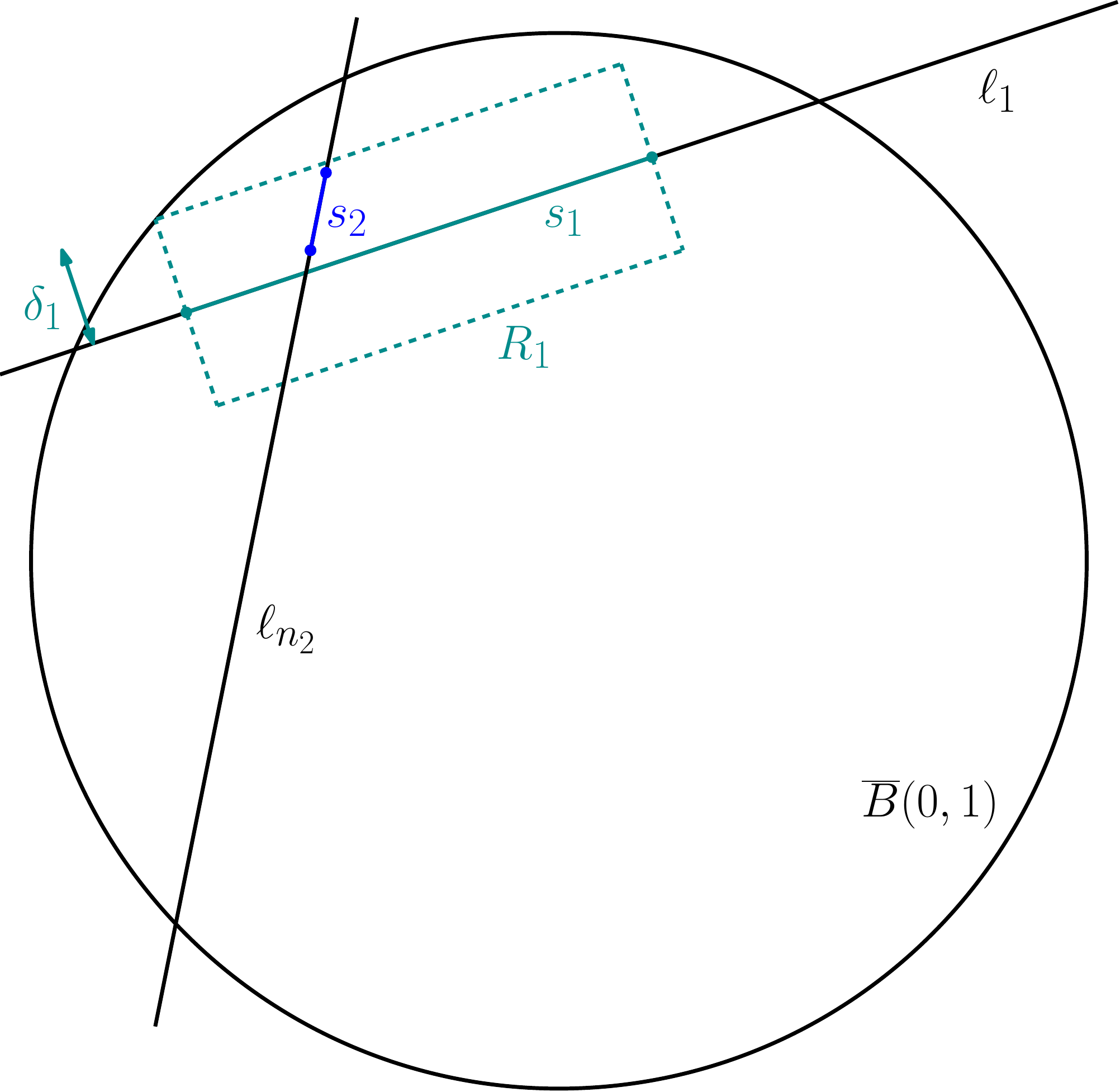}&\includegraphics[width=0.4\linewidth]{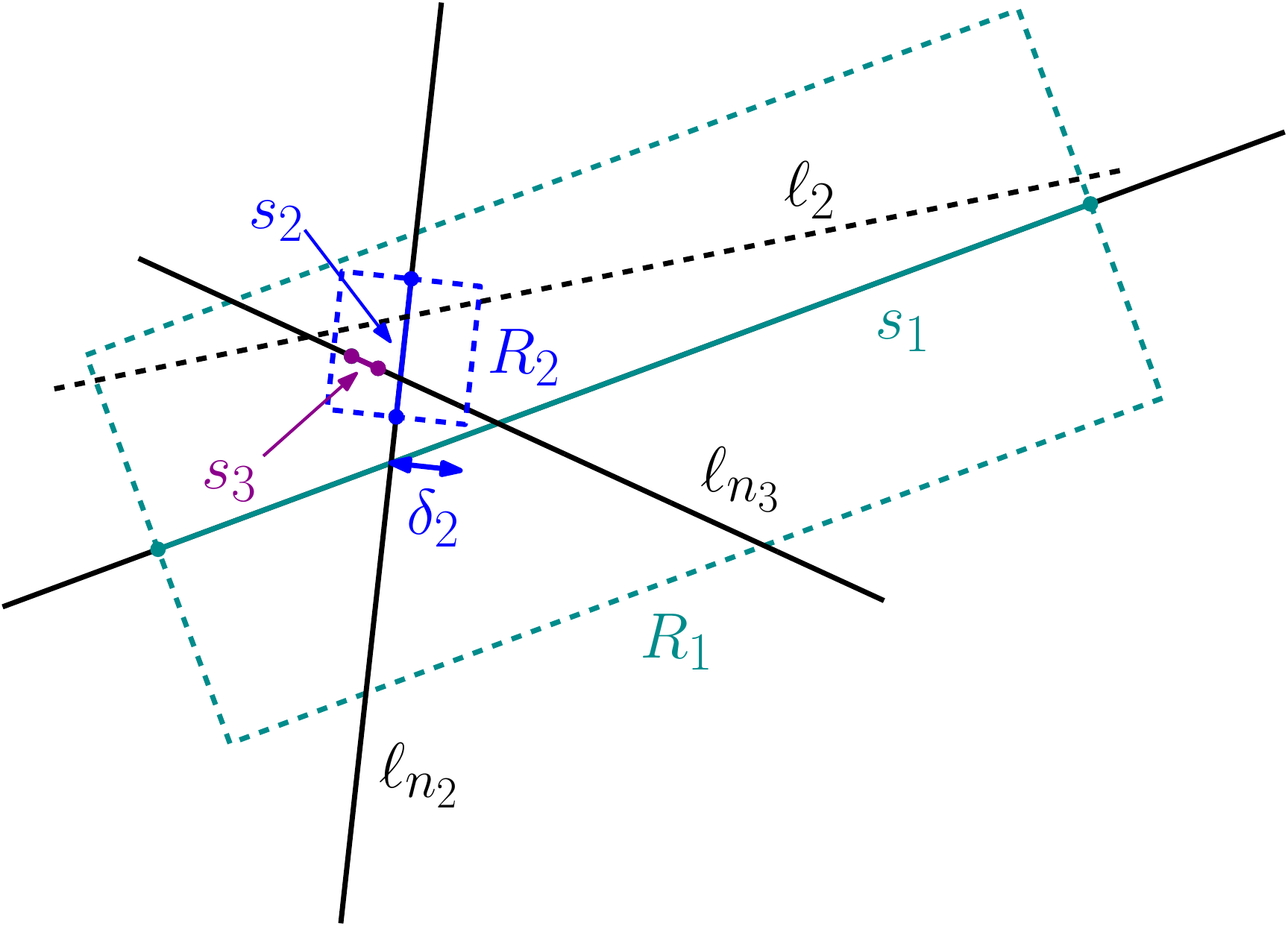}
\end{tabular}
\caption{\small\textsc{Left\textbf{:}} \normalsize the line segment $s_1$, the line $\ell_{n_2}$, the rectangle $R_1$ and the line segment $s_2$. \small\textsc{Right\textbf{:}} \normalsize zooming  in on the rectangle $R_1$, we represent the line segment $s_2$, the line $\ell_{n_3}$, also $\ell_2$ which is potentially there, then the rectangle $R_2$ and finally the line segment $s_3$.}\label{figsegmemb}
\end{figure}

\begin{enumerate}
\item[$(k=1)$] Let $n_1=1$.
Almost surely, the line $\ell_1$ intersects the interior of the ball $\overline{B}(0,1)$, and we can fix a segment $s_1$ of this line that is included in that interior.
\item[$(k=2)$] Let $n_2=\min\left\{n>1:\text{$\ell_n$ meets $s_1$}\right\}$ be the index of the next line hitting $s_1$, and define $R_1$ as a rectangle along that line segment, with half-side length $0<\delta_1\leq\exp\left[-v_{n_2}^{-1}\right]$ small enough so that $R_1\subset\overline{B}(0,1)$.
Zooming in on the rectangle $R_1$, we can fix a segment $s_2$ of the line $\ell_{n_2}$ that: (a) is included in the interior of $R_1$, and (b) avoids $\ell_1$.
\item[$(k=3)$] Let $n_3=\min\left\{n>n_2:\text{$\ell_n$ meets $s_2$}\right\}$ be the index of the next line hitting $s_2$, and define $R_2$ as a rectangle along that line segment, with half-side length $0<\delta_2\leq\exp\left[-v_{n_3}^{-1}\right]$ small enough so as to have $R_2$: (a) included in the rectangle $R_1$, and (b) avoiding the line $\ell_1$.
Zooming in on the rectangle $R_2$, fix a segment $s_3$ of the line $\ell_{n_3}$ that: (a) is included in the interior of $R_2$, and (b) avoids the lines $\ell_{n_2}$ and $\ell_2$ (possibly $n_2$ differs from $2$, and we want $s_3$ to avoid the line $\ell_2$).
\item[$\ldots$] Repeating the procedure inductively, we construct rectangles $R_1\supset R_2\supset\ldots$ such that, for each $k\in\mathbb{N}^*$, the rectangle $R_k$ avoids the lines $\ell_1,\ldots,\ell_{k-1}$, and any point $x\in R_k$ is at distance at most $\delta_k\leq\exp\left[-v_{n_{k+1}}^{-1}\right]$ from $\ell_{n_k}$.
\end{enumerate}
By compactness, the intersection $\bigcap_{k\geq1}R_k$ is non-empty (see, e.g, \cite[special case after Theorem 26.9]{munkres}): let $x$ be an element of that intersection.
By construction, the point $x$ belongs to $\overline{B}(0,1)$, does not lie on any of the lines $\ell_1,\ell_2,\ldots$, and satisfies $d(x,\ell_{n_k})\leq\exp\left[-v_{n_{k+1}}^{-1}\right]$ for all $k\in\mathbb{N}^*$.
Thus, the point $x$ is not on a road, and we claim that $V^x_r\geq\ln(1/r)^{-1}$ for all sufficiently small $r$.
Indeed, for $r\in\left]0,\exp\left[-v_1^{-1}\right]\right]$, taking $k\in\mathbb{N}^*$ such that $\exp\left[-v_{n_{k+1}}^{-1}\right]\leq r\leq\exp\left[-v_{n_k}^{-1}\right]$, we have $d(x,\ell_{n_k})\leq\exp\left[-v_{n_{k+1}}^{-1}\right]\leq r$, hence $V^x_r\geq v_{n_k}\geq\ln(1/r)^{-1}$.
\end{proof}

\begin{appendix}
\section*{Appendix}\label{appendix}
\begin{proof}[Proof of Lemma \ref{lemdroitesboules}]
Although the proof we present here is based on covering arguments, the result can be obtained by direct computations, with the description of $\mu_d$ as a pushforward measure recalled in Section \ref{secrappels}.
Let $x\neq y\in\mathbb{R}^d$ and $r,s>0$, and without loss of generality assume $r\geq s$.

\small\textsc{Lower bound\textbf{.}}
\normalsize For $r\leq|x-y|$, let $\left(\overline{B}(y',s)\right)_{y'\in\mathcal{Y}}$ be a covering of $\partial B(x,|x-y|)$ by balls of radius $s$, with centres $y'\in\partial B(x,|x-y|)$ more than $s$ apart from each other.
Since the balls $\left(\overline{B}(y',s/2);~y'\in\mathcal{Y}\right)$ are disjoint and included in the annulus $\left\{z\in\mathbb{R}^d:|x-y|-s/2\leq|x-z|\leq|x-y|+s/2\right\}$, we have $\#\mathcal{Y}\cdot\upsilon_d(s/2)^d\leq\upsilon_d\cdot d(|x-y|+s/2)^{d-1}\cdot s$, i.e, since $s\leq r\leq|x-y|$:
\[\#\mathcal{Y}\leq\left(\frac{2}{s}\right)^d\cdot d\left(\frac{3|x-y|}{2}\right)^{d-1}\cdot s=:C\cdot\frac{|x-y|^{d-1}}{s^{d-1}}.\]
Integrating
\[\mathbf{1}\left(\text{$\ell$ hits $\overline{B}(x,r)$}\right)\leq\frac{\#\left\{y'\in\mathcal{Y}:\text{$\ell$ hits $\overline{B}(x,r)$ and $\overline{B}(y',s)$}\right\}}{2}\quad\text{for all $\ell\in\mathbb{L}_d$}\]
with respect to $\mu_d$ yields, using invariance:
\[\upsilon_{d-1}r^{d-1}\leq\frac{\#\mathcal{Y}\cdot\mu_d\left[\overline{B}(x,r)~;~\overline{B}(y,s)\right]}{2}.\]
We get
\[\mu_d\left[\overline{B}(x,r)~;~\overline{B}(y,s)\right]\geq\frac{2\upsilon_{d-1}}{C}\cdot\frac{r^{d-1}\cdot s^{d-1}}{|x-y|^{d-1}}.\]

\small\textsc{Upper bound\textbf{.}}
\normalsize If $r\geq|x-y|$, then we have
\[\mu_d\left[\overline{B}(x,r)~;~\overline{B}(y,s)\right]\leq\mu_d\left[\overline{B}(y,s)\right]=\upsilon_{d-1}s^{d-1}\leq\upsilon_{d-1}\cdot\frac{r^{d-1}\cdot s^{d-1}}{|x-y|^{d-1}}.\]
In the case $r\leq|x-y|$, let $\mathcal{Y}$ be a maximal $3s$-separated subset of $\partial B(x,3|x-y|)$.
Since the balls $\left(\overline{B}(y',3s);~y'\in\mathcal{Y}\right)$ cover $\partial B(x,3|x-y|)$, the balls $\left(\overline{B}(y',4s);~y'\in\mathcal{Y}\right)$ cover the annulus $\left\{z\in\mathbb{R}^d:3|x-y|-s\leq|x-z|\leq3|x-y|+s\right\}$, and we have $\#\mathcal{Y}\cdot\upsilon_d(4s)^d\geq\upsilon_d\cdot d(3|x-y|-s)^{d-1}\cdot2s$, i.e, since $s\leq r\leq|x-y|$:
\[\#\mathcal{Y}\geq(4s)^{-d}\cdot d(2|x-y|)^{d-1}\cdot2s=:c\cdot\frac{|x-y|^{d-1}}{s^{d-1}}.\]
Now, the balls $\left(\overline{B}(y',s);~y'\in\mathcal{Y}\right)$ are at distance at least $s$ from one another, and included in the annulus
\[\left\{z\in\mathbb{R}^d:3|x-y|-s\leq|x-z|\leq3|x-y|+s\right\}.\]
The intersection of a line hitting $\overline{B}(x,r)$ with that annulus is the disjoint union of two line segments, each of length at most $2\sqrt{3}\cdot s$.
Therefore, any line passing through the ball $\overline{B}(x,r)$ cannot hit more than $2\left(1+2\sqrt{3}\right)$ of the $\overline{B}(y',s)$, and we have
\[\mathbf{1}\left(\text{$\ell$ hits $\overline{B}(x,r)$}\right)\geq\frac{\#\left\{y'\in\mathcal{Y}:\text{$\ell$ hits $\overline{B}(x,r)$ and $\overline{B}(y',s)$}\right\}}{2\left(1+2\sqrt{3}\right)}\quad\text{for all $\ell\in\mathbb{L}_d$.}\]
Integration with respect to $\mu_d$ yields, upon using invariance:
\[\upsilon_{d-1}r^{d-1}\geq\frac{\#\mathcal{Y}\cdot\mu_d\left[\overline{B}(x,r)~;~\overline{B}(y,s)\right]}{2\left(1+2\sqrt{3}\right)},\]
and we get
\[\mu_d\left[\overline{B}(x,r)~;~\overline{B}(y,s)\right]\leq\frac{2\left(1+2\sqrt{3}\right)\upsilon_{d-1}}{c}\cdot\frac{r^{d-1}\cdot s^{d-1}}{|x-y|^{d-1}}.\]
\end{proof}

\begin{proof}[Proof of $(\flat\flat)$ and $(*)$]
Fix $R>0$ and $v_0\in\mathbb{R}_+^*$.
Let $\delta>0$, and let $\left(\overline{B}(x,\delta)\right)_{x\in\mathcal{X}}$ be a covering of $\overline{B}(0,R)$ by balls of radius $\delta$, with centres $x\in\overline{B}(0,R)$ more than $\delta$ apart so that the $\left(\overline{B}(x,\delta/2)\right)_{x\in\mathcal{X}}$ are disjoint.
In particular, we have $\#\mathcal{X}\leq(2R/\delta+1)^d$.
On the underlying probability space, we have the following inclusions:
\[A_2:=\left(\text{there is a crossing of two roads of $\Pi_{v_0}$ inside $\overline{B}(0,R)$}\right)\subset\bigcup_{x\in\mathcal{X}}\left(\Pi\left(\left[\overline{B}(x,\delta)\right]\times[v_0,\infty[\right)\geq2\right),\]
and
\[A_3:=\left(\text{there is a crossing of three roads of $\Pi_{v_0}$ inside $\overline{B}(0,R)$}\right)\subset\bigcup_{x\in\mathcal{X}}\left(\Pi\left(\left[\overline{B}(x,\delta)\right]\times[v_0,\infty[\right)\geq3\right).\]
By union bounds, we get
\[\mathbb{P}\left(\bigcup_{x\in\mathcal{X}}\left(\Pi\left(\left[\overline{B}(x,\delta)\right]\times[v_0,\infty[\right)\geq2\right)\right)\leq\left(\frac{2R}{\delta}+1\right)^d\cdot\frac{\left(\delta^{d-1}\cdot v_0^{-(\gamma-1)}\right)^2}{2!}=:a_2(\delta)\]
and
\[\mathbb{P}\left(\bigcup_{x\in\mathcal{X}}\left(\Pi\left(\left[\overline{B}(x,\delta)\right]\times[v_0,\infty[\right)\geq3\right)\right)\leq\left(\frac{2R}{\delta}+1\right)^d\cdot\frac{\left(\delta^{d-1}\cdot v_0^{-(\gamma-1)}\right)^3}{3!}=:a_3(\delta).\]
As $\delta\to0^+$, we have $a_2(\delta)=O\left(\delta^{d-2}\right)$, hence $A_2$ is negligible in dimensions $d\geq3$; and $a_3(\delta)=O\left(\delta^{2d-3}\right)$, hence $A_3$ is negligible in all dimensions $d\geq2$.
Upon taking countable unions over $R$ and $v_0$, we obtain $(\flat\flat)$ and $(*)$.
\end{proof}
\end{appendix}
%
%

\begin{acks}[Acknowledgements]
I warmly thank Nicolas Curien and Arvind Singh for their constant support and guidance, and for their helpful comments on draft versions of this paper.
I would also like to thank Jonas Kahn for the stimulating discussions we had while he was visiting in Orsay, and Wilfrid Kendall for his feedback on an earlier version of this paper.
Finally, I am thankful to the referees for their careful reading of the paper: their remarks have helped improve the presentation a great deal.
\end{acks}
\begin{funding}
I acknowledge support from the ERC Advanced Grant 740943 \textsc{GeoBrown}.
\end{funding}


\bibliographystyle{imsart-number} 
\bibliography{biblio.bib}       


\end{document}